\newcommand{\hkm}[1]{\sobh{#1}_m}
\newcommand{\hatu}{\widehat{u}}
\newcommand{\hatv}{\widehat{v}}
\renewcommand{\enorm}[1]{\ensuremath{\Norm{#1}}_{dG}}
\renewcommand{\bih}[2]{\ensuremath{\cA_h\qp{#1,#2}}}
\numberwithin{equation}{section}
\begin{document}

\title{Reduced relative entropy techniques for a posteriori analysis of multiphase problems in elastodynamics}
\shorttitle{RRE a posteriori indicators in elastodynamics}

\author{%
  {\sc
    Jan Giesselmann
    \thanks{\tt{jan.giesselmann@mathematik.uni-stuttgart.de}}
    \\
    [2pt]{
      Institute of Applied Analysis and
      Numerical Simulation, University of Stuttgart,
      Pfaffenwaldring 57, D-70563 Stuttgart, Germany
      and}
  }
  \\
  {\sc 
    Tristan Pryer
    \thanks{Corresponding author. {\tt{T.Pryer@reading.ac.uk}}}
    \\
    [2pt]{
      Department of Mathematics and
      Statistics, Whiteknights, PO Box 220,
      Reading, GB-RG6 6AX, England UK. }
  }
}
\shortauthorlist{J. Giesselmann and T. Pryer}

\maketitle

\begin{abstract}
  {We give an a posteriori analysis of a semidiscrete discontinuous
  Galerkin scheme approximating solutions to a model of multiphase
  elastodynamics, which involves an energy density depending not only
  on the strain but also the strain gradient.  A key component in the
  analysis is the \emph{reduced relative entropy} stability framework
  developed in [Giesselmann 2014]. This framework allows energy type
  arguments to be applied to continuous functions. Since we advocate
  the use of discontinuous Galerkin methods we make use of two
  families of reconstructions, one set of discrete reconstructions
  [Makridakis and Nochetto 2006] and a set of elliptic reconstructions
  [Makridakis and Nochetto 2003] to apply the reduced relative entropy
  framework in this setting.}
  {discontinuous Galerkin finite element method, a posteriori error
  analysis, multiphase elastodynamics, relative entropy, reduced
  relative entropy.}
\end{abstract}

\section{Introduction}

Our goal in this work is to introduce the \emph{reduced relative
  entropy} technique as a methodology for deriving a posteriori
estimates to finite element approximations of a problem arising in
elastodynamics. In particular, this work is concerned with providing a
rigorous a posteriori error estimate for a semi (spatially) discrete
discontinuous Galerkin scheme approximating a model for shearing
motions of an elastic bar undergoing phase transitions between phases
which correspond to different (intervals of) shears, \eg austenite and
martensite.  In this model the energy density depends not only on the
strain but also on the strain gradient. Such models are often referred
to as models of ``first strain gradient'' or ``second gradient'' type
\cite{JTB02,JLCD01}.  The latter is due to the fact that the strain
gradient is the second gradient of the deformation.

The relative entropy technique is the natural stability framework for
problems in nonlinear elasticity. Introduced, for hyperbolic conservation laws,
in \cite{Daf79,Dip79}, this technique is based on the fact that
systems of conservation laws are usually endowed with an
entropy/entropy flux pair. For conservation laws describing physical
phemonena this notion of entropy follows from the physical one. The
entropy/entropy flux pair also gives rise to an admissibility
condition for weak solutions which leads to the notion of entropy
solutions. It can also be used to define the notion of relative
entropy between two solutions. In case of a convex entropy the
relative entropy is equivalent to the square of the $\leb{2}$
distance. In hyperbolic balance laws and related problems stability
estimates based on the relative entropy framework are by now standard
if the entropy is at least quasi or polyconvex, see \cite{Daf10} and
references therein.

The model we consider in this work does not fall into this framework
however.  It describes a multiphase process and, therefore, the
energy density is expected to have a multiwell shape and, in
particular, is neither quasi nor polyconvex. Indeed, the first order
part of the model is no longer hyperbolic but of hyperbolic/elliptic
type.  It is well known that in this situation entropy solutions (to
the first order problem) are not unique \cite{LeF02} and either
kinetic relations have to be introduced or regularisations need to be
considered.  We follow the second approach and consider a model
including a second gradient/capillarity regularisation which also
allows for viscosity.

To account for the non-convexity of the energy, we will employ the
\emph{reduced relative entropy technique} which is a modification of
the classical arguments used in the relative entropy framework in
which we only consider the \emph{convex contributions} of the entropy
\cite{Gie}. Roughly speaking it uses the higher order regularizing
terms in order to compensate for the non-convexity of the energy. The
reduced relative entropy technique is only applicable when studying
continuous solutions to the problem, as such, is not immediately
applicable to discontinuous Galerkin approximations. Our methodology
consists of applying appropriate \emph{reconstructions} of the
discrete solution into the continuous setting, then using the reduced
relative entropy technique to bound the difference of the
reconstruction and the exact solution. 

The numerical analysis of schemes approximating regularized
hyperbolic/elliptic problems, like the model at hand or the
Navier-Stokes-Korteweg system in compressible multiphase flows, is
rather limited
\cite{CL01,Die07,BP13,GiesselmannMakridakisPryer:2014,JTB02,Gie_14b,JLCD01},
and the available works mainly focus on the stability of schemes.
Previous works on discontinuous Galerkin methods for scalar dispersive
equations can be found in \cite{CS08, BCKX11, XS11}. See also \cite{OrtnerSuli:2007} for discontinuous Galerkin approximating hyperbolic nonlinear elastodynamics in several space dimensions. Note that the results of \cite{OrtnerSuli:2007} do not require convexity of the energy density but rely on a weaker G\"arding type inequality which is in agreement with constitutive laws of real materials without phase transitions.

A benefit of our approach is that we are able to derive both a priori,
assuming sufficient regularity on the solution
\cite{GiesselmannPryer:2014}, and a posteriori error estimates based on
similar techniques. In the first instance, we apply this methodology
to a regularisation of the equations of nonlinear elastodynamics
including both viscous and dispersive regularising terms. In the case that dispersion regularisation is
small, solutions to the equations display thin layers which are
physically interpreted as phase boundaries. 

In this work, for clarity, we study the one dimensional setting.  Our
analysis is fully extendable to the multidimensional setting
discussed in the second part of \cite{Gie}, assuming an appropriate
discrete reconstruction operator can be constructed (see Remark
\ref{rem:multid}). We make the
important observation that the a posteriori error bounds we derive are applicable
as the viscous parameter tends to zero but blow up when the dispersion
parameter tends to zero. We also expect that our results can be extended to a wider class of problems, for example, 
the (multidimensional) Navier-Stokes-Korteweg equations, although in
that case certain technical restrictions will be necessary; \eg all
involved densities need to be bounded away from vacuum.

The rest of the paper is organised as follows: In \S \ref{sec:model}
we introduce the model problem together with some of its properties
and formalise our notation. In \S \ref{sec:rre} we give a summary of
the reduced relative entropy technique which we use to prove a
stability result in Theorem \ref{the:rreb}. In \S
\ref{sec:discretisation} we state the discretisation of the model
problem, some of its properties and introduce the operators which we
require for the a posteriori analysis. In \S \ref{sec:a posteriori} we
state our main result, which is a computable a posteriori indicator
for the error in the natural entropy norm. Finally, in
\S \ref{sec:numerics} we give summarise extensive numerical results.

\section{Model description and properties}
\label{sec:model}

The specific class of problem which we consider here models the
shearing motion of an elastic bar undergoing phase transitions between
say austenite and martensite phases \cite{AK91}. These models are
based on the isothermal nonlinear equations of elastodynamics.  In one
spatial dimension, they are 
\begin{equation}
  \label{eq:hyp-ell-model}
  \begin{split}
    \pd t u - \pd x v &= 0
    \\
    \pd t v - \pd x W'(u) &= 0,
  \end{split}
\end{equation}
where $u$ is the strain, $v$, the velocity and $W=W(u)$ is the energy
density, which is given by a constitutive relation.
Notice that this may also be rewritten as a nonlinear wave equation
\begin{equation}
  \pd {tt} y - \pd x W'(\pd x y) = 0,
\end{equation}
for the displacement field $y$ which satisfies $\pd x y = u$.  If
\eqref{eq:hyp-ell-model} describes a multiphase situation $W$ has a
multiwell shape and, in particular, is not convex.  This makes
\eqref{eq:hyp-ell-model} a problem of mixed hyperbolic/elliptic type.
For such problems entropy solutions, which are standard in the study
of hyperbolic conservation laws, are not unique.  There are two
methods in order to regain uniqueness of solutions: Either a kinetic
relation, singling out the correct phase transitions,
\cite[c.f.]{AK91} can be imposed or the problem can be regularized
\cite[c.f.]{Sle83,Sle84}.

In this work we focus on the problem
\begin{equation}
  \label{eq:model-prob}
  \begin{split}
    \pdt u - \pd x v &= 0
    \\
    \pdt v - \pd x W'(u) &= \mu \pd {xx} v - \gamma \pd {xxx} u
    \\
    u(x,0) &= u_0(x) \\
    v(x,0) &= v_0(x),
  \end{split}
\end{equation}
where $\mu \geq 0 \AND \gamma > 0$ denote the strength of viscous and
capillarity effects. We will not make any precise assumptions on the
convex and concave parts of $W$ but simply assume $W \in
\cont{3}(\rR,[0,\infty)),$ allowing for all kinds of (regular)
multiwell shapes.

\begin{remark}[State space]
  We could also apply our theory in case $W$ is only defined on some
  open interval $I \subset \rR,$ as would be the case if
  \eqref{eq:model-prob} were to describe compressible fluid flows in a
  pipe or longitudinal motions of an elastic bar.  However, in that
  case we would have to impose the condition that the solutions only
  take values inside a convex and compact subset of the interval $I$, however, for clarity of exposition we will not consider this scenario here.
\end{remark}

We couple (\ref{eq:model-prob}) with periodic
boundary conditions. With that in mind we will denote $S^1$ to be the
\emph{one sphere}, \ie the unit interval with coinciding end points.
Again, note that under sufficient regularity assumptions
(\ref{eq:model-prob}) is equivalent to the wave like equation
\begin{equation}
  \pd {tt} y  - \pd x W'(\pd x y) = \mu \pd {xxt} y - \gamma \pd{xxxx} y.
\end{equation}

We will use standard notation for Sobolev spaces
\cite{Cia78,Evans:1998}
\begin{equation}
  \sobh{k}(S^1) 
  := 
  \ensemble{\phi\in\leb{2}(S^1)}
  {\D^{\alpha}\phi\in\leb{2}(S^1), \text{ for } \alpha\leq k},
\end{equation}
which are equipped with norms and semi-norms
\begin{gather}
  \Norm{u}_{k}^2
  := 
  \Norm{u}_{\sobh{k}(S^1)}^2 
  = 
  \sum_{{\alpha}\leq k}\Norm{\D^{\alpha} u}_{\leb{2}(S^1)}^2 
  \\
  \AND \norm{u}_{k}^2 
  :=
  \norm{u}_{\sobh{k}(S^1)}^2 
  =
  \sum_{{\alpha} = k}\Norm{\D^{\alpha} u}_{\leb{2}(S^1)}^2
\end{gather}
respectively, where derivatives $\D^{\alpha}$ are understood in a
weak sense. In addition, let
\begin{equation}
  \hkm{k}(S^1)
  := 
  \ensemble{\phi\in{\sobh{k}(S^1)}}
  {\int_{S^1} \phi = 0}.
\end{equation}
We also make use of the following notation for time dependent Sobolev
(Bochner) spaces:
\begin{equation}
  \cont{i}(0,T; \sobh{k}(S^1))
  :=
  \ensemble{u : [0,T] \to \sobh{k}(S^1)}
           {u \text{ and $i$ temporal derivatives are continuous}}.
\end{equation}

\begin{theorem}[Existence of strong solutions {\cite[Cor 2.4]{Gie}}]
  \label{the:existence}
  Let $u_0\in\hkm{3}(S^1)$, $v_0\in\hkm{2}(S^1)$ and $T,\mu,\gamma>0$, then
  (\ref{eq:model-prob}) admits a unique strong solution
  \begin{equation}
    \qp{u,v}
    \in
    \cont{0}
    \qp{0,T;\hkm{3}(S^1)}
    \cap 
    \cont{1}
    \qp{0,T;\hkm{1}(S^1)}
    \times
    \cont{0}
    \qp{0,T;\hkm{2}(S^1)}
    \cap
    \cont{1}
    \qp{0,T;\hkm{0}(S^1)}.
  \end{equation}
\end{theorem}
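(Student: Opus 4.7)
The plan is to construct solutions via a Fourier--Galerkin scheme on $S^1$. Projecting \eqref{eq:model-prob} onto the span of the first $N$ non-constant Fourier modes yields an ODE system in the Fourier coefficients whose right-hand side is locally Lipschitz because $W \in \cont{3}(\rR,[0,\infty))$. Picard--Lindel\"of then supplies a unique local-in-time solution $(u_N, v_N)$, and the mean-zero condition is preserved under the flow since every term on the right-hand side of \eqref{eq:model-prob} is a perfect $x$-derivative.

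The heart of the argument is a chain of $N$-uniform a priori estimates. Testing the second equation in \eqref{eq:model-prob} against $v_N$, using the first equation to rewrite $-\int_{S^1} v_N \partial_x W'(u_N) = \frac{d}{dt}\int_{S^1} W(u_N)$ and integrating by parts in the dispersive term, gives the natural energy identity
\[
  \frac{d}{dt}\Bigl(\tfrac12 \Norm{v_N}_{\leb{2}}^2 + \int_{S^1} W(u_N) + \tfrac{\gamma}{2}\Norm{\partial_x u_N}_{\leb{2}}^2\Bigr) + \mu \Norm{\partial_x v_N}_{\leb{2}}^2 = 0.
\]
Since $W \geq 0$ and $\gamma > 0$, this bounds $u_N$ in $\sobh{1}$ and $v_N$ in $\leb{2}$; the one-dimensional Sobolev embedding then gives a uniform $\leb{\infty}$ bound on $u_N$ and hence uniform bounds on $W^{(j)}(u_N)$ for $j=1,2,3$. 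For the higher regularity claimed I would apply $\partial_x^k$ to both equations for $k=1,2$, test with $\partial_x^k v_N$, and repeat the manipulations. The structural point is that the $-\gamma \partial_{xxx} u$ contribution, after integration by parts against $\partial_x^k v_N = \partial_x^k \partial_t u_N$, again produces the time derivative of $\tfrac{\gamma}{2}\Norm{\partial_x^{k+1}u_N}_{\leb{2}}^2$, while the nonlinear term $\partial_x^{k+1} W'(u_N)$ is handled by a Moser-type composition/product estimate of the form $\Norm{\partial_x^{k+1}W'(u_N)}_{\leb{2}} \leq C(\Norm{u_N}_{\leb{\infty}})\Norm{u_N}_{\sobh{k+1}}$ followed by Young's inequality, yielding a Gr\"onwall inequality that closes the $\sobh{3} \times \sobh{2}$ bound.

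Once $N$-uniform $\leb{\infty}(0,T;\sobh{3} \times \sobh{2})$ estimates are in hand, with $\partial_t u_N$ and $\partial_t v_N$ bounded in $\leb{\infty}(0,T;\sobh{1} \times \leb{2})$ by inspection of the equations, a weak-$*$ extraction combined with Aubin--Lions compactness supplies a limit $(u,v)$ with the claimed regularity; continuity in time in the top norms follows from the structure of the equations once strong $\cont{0}(0,T;\leb{2})$ convergence is established. Uniqueness is handled directly at the continuous level by subtracting two candidate solutions, writing $W'(u_1) - W'(u_2) = (u_1 - u_2)\int_0^1 W''(u_2 + s(u_1 - u_2))\, ds$, and running the base energy argument on the difference; Gr\"onwall closes the estimate.

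The principal obstacle I anticipate is the higher-order energy estimate: the integrations by parts must be arranged so that the terms coming from the dispersive regularisation $-\gamma \partial_{xxx} u$ either fall inside a time derivative or are absorbed by the dissipation $\mu \Norm{\partial_x^{k+1} v_N}_{\leb{2}}^2$, while the nonlinear contributions, which after repeated differentiation via Fa\`a di Bruno involve products like $W''(u_N)(\partial_{xx}u_N)^2$, can only be controlled once a uniform $\leb{\infty}$ bound on $u_N$ is available from the lower-level estimate. This is exactly where $\gamma > 0$ is indispensable, and where the bound degenerates as $\gamma \to 0$, consistent with the remark in the introduction that the a posteriori estimator blows up in the zero-dispersion limit.
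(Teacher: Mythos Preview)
The paper does not prove this theorem; it is imported wholesale from the cited reference, and the only information the paper gives about that proof is the subsequent remark that it relies on semi-group techniques and therefore needs $\mu>0$. Your Galerkin--energy approach is a perfectly reasonable alternative route and is in the spirit of the reduced relative entropy estimates that the paper develops later, so in that sense your proposal is closer to the paper's own toolkit than the cited semi-group argument.

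There is, however, one genuine technical point you should tighten. At the top level $k=2$ your plan is to bound $\Norm{\partial_x^{3} W'(u_N)}_{\leb{2}}$ by a Moser-type estimate $C(\Norm{u_N}_{\leb{\infty}})\Norm{u_N}_{\sobh{3}}$. That estimate requires $W'\in\cont{3}$, i.e.\ $W\in\cont{4}$, whereas the standing hypothesis in the paper is only $W\in\cont{3}$. The fix is exactly the mechanism you already identified for the dispersive term but applied to the nonlinearity instead: integrate by parts once to obtain
\[
\int_{S^1}\partial_x^{3} W'(u_N)\,\partial_x^{2}v_N
= -\int_{S^1}\partial_x^{2}W'(u_N)\,\partial_x^{3}v_N,
\]
where $\partial_x^{2}W'(u_N)=W'''(u_N)(\partial_x u_N)^2+W''(u_N)\partial_{xx}u_N$ uses only three derivatives of $W$ and is controlled in $\leb{2}$ by the lower-level bounds (in one dimension $\partial_x u_N,\partial_{xx}u_N\in\leb{\infty}$). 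The factor $\partial_x^{3}v_N$ is then absorbed by the viscous dissipation $\mu\Norm{\partial_x^{3}v_N}_{\leb{2}}^2$ via Young's inequality. This is precisely where the hypothesis $\mu>0$ enters your argument, in agreement with the paper's remark; your closing paragraph attributes the obstruction solely to $\gamma>0$, but in fact the $\sobh{3}\times\sobh{2}$ estimate under the minimal $\cont{3}$ assumption on $W$ also leans on the viscosity. (A related slip: you write $\partial_x^k v_N=\partial_x^k\partial_t u_N$, but from the first equation one has $\partial_t u_N=\partial_x v_N$, so the correct identity is $\partial_x^{k+1}v_N=\partial_t\partial_x^{k}u_N$; your structural conclusion that the dispersive term becomes $\tfrac{\gamma}{2}\tfrac{d}{dt}\Norm{\partial_x^{k+1}u_N}_{\leb{2}}^2$ is nonetheless correct once the indices are fixed.)
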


\begin{remark}[Viscosity]
  For the semi-group techniques employed in the proof of Theorem
  \ref{the:existence} it is required that $\mu>0$.  In contrast, all
  our subsequent estimates also hold in case $\mu=0$ provided
  sufficiently regular solutions exist.
\end{remark}

\begin{lemma}[Energy balance]\label{lem:energy}
  Let $\qp{u,v}$ be a strong solution of (\ref{eq:model-prob}),
  $T,\gamma > 0$ and $\mu \geq 0$ then
  \begin{equation}
    \label{eq:energy}
    \ddt \qp{\int_{S^1} W(u) + \frac{\gamma}{2} \norm{\pd x u}^2 + \frac{1}{2}\norm{v}^2} = - \int_{S^1} \mu \norm{\pd x v}^2.
  \end{equation}
\end{lemma}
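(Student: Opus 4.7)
The plan is to differentiate the energy functional
\begin{equation*}
  E(t) := \int_{S^1} W(u) + \frac{\gamma}{2}\norm{\pd x u}^2 + \frac{1}{2}\norm{v}^2
\end{equation*}
directly in time and substitute the two equations of \eqref{eq:model-prob}. The regularity furnished by Theorem \ref{the:existence}, namely $u\in\cont{0}(0,T;\hkm{3}(S^1))\cap\cont{1}(0,T;\hkm{1}(S^1))$ and $v\in\cont{0}(0,T;\hkm{2}(S^1))\cap\cont{1}(0,T;\hkm{0}(S^1))$, together with $W\in\cont{3}(\rR,[0,\infty))$, is comfortably sufficient to justify differentiation under the integral sign and every integration by parts that will appear.

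First, I would write
\begin{equation*}
  \ddt E = \int_{S^1} W'(u)\,\pdt u + \gamma\int_{S^1} \pd x u \cdot \pd x(\pdt u) + \int_{S^1} v\,\pdt v,
\end{equation*}
then use the continuity equation $\pdt u = \pd x v$ in the first two integrals and insert the momentum equation $\pdt v = \pd x W'(u) + \mu\pd{xx} v - \gamma\pd{xxx} u$ into the third. This produces three structural groupings: the pressure-work pair $\int_{S^1} W'(u)\pd x v + \int_{S^1} v\,\pd x W'(u)$, a dispersive pair $\gamma\int_{S^1} \pd x u\,\pd{xx} v - \gamma\int_{S^1} v\,\pd{xxx} u$, and the viscous term $\mu\int_{S^1} v\,\pd{xx} v$.

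Next I would invoke the periodic boundary conditions on $S^1$: the pressure-work pair reassembles into $\int_{S^1} \pd x\bigl(v\,W'(u)\bigr)=0$, the two dispersive contributions cancel after transferring two derivatives from $\pd{xx} v$ onto $\pd x u$, and one integration by parts turns $\mu\int_{S^1} v\,\pd{xx} v$ into $-\mu\int_{S^1}\norm{\pd x v}^2$. Collecting what survives yields \eqref{eq:energy}.

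There is no genuine obstacle here; the only point worth checking is that every integrand that appears has one factor in $\leb{2}$ and its partner at least in $\leb{2}$ (for instance $\pd{xxx} u\in\leb{2}$ pairs with $v\in\sobh{2}\subset\leb{2}$), so the integrations by parts are fully legal and no boundary contributions arise because all functions are periodic on $S^1$.
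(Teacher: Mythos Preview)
Your proof is correct and is essentially the same computation as the paper's: the paper phrases it as ``test the first equation with $W'(u)-\gamma\pd{xx}u$, the second with $v$, and sum'', which amounts precisely to writing out $\ddt E$ and substituting the PDE, followed by the same integrations by parts you describe. The only cosmetic difference is the order of presentation.
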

\begin{proof}
  Testing the first equation of (\ref{eq:model-prob}) with $W'(u) -
  \gamma \pd{xx} u$ and the second equation of (\ref{eq:model-prob})
  with $v$ and taking the sum, we see
  \begin{equation}
    \begin{split}
      0
      &=
      \int_{S^1}
      \pd t u W'(u) 
      -
      \gamma \pd{xx} u \pdt u 
      -
      W'(u) \pd x v 
      +
      \gamma \pd x v \pd {xx} u
      \\
      &\qquad \qquad \qquad 
      +
      v\pdt v
      -
      v\pd x W'(u)
      -
      \mu v \pd {xx} v
      +
      \gamma v \pd{xxx} u.
    \end{split}
  \end{equation}
  Upon integrating by parts we have 
  \begin{equation}
    0 
    =
    \int_{S^1}
    \pdt u W'(u) 
    +
    \gamma \pd x u \pd{tx} u
    +
    v\pdt v
    +
    \mu \qp{\pd x v}^2,
  \end{equation}
  which yields the desired result.
\end{proof}

\begin{remark}[Strain gradient dependent energy]
  Note that the energy density, \ie the integrand in the left hand
  side of \eqref{eq:energy}, consists of three terms.  The kinetic
  energy $\frac{1}{2} v^2 $ and the potential energy (density) which
  is decomposed additively into a strain dependent nonlinear part $W$
  and a part depending on the strain gradient. This latter term is the
  reason why this type of model is called ``first strain gradient'' or
  ``second (deformation) gradient'' model.
\end{remark}

\begin{remark}[$\leb{\infty}$ bound for $u$]
  Lemma \ref{lem:energy} and the fact that the mean value of $u$ does
  not change in time imply that
  $\Norm{u}_{\leb{\infty}(0,\infty;\sobh{1}(S^1))}$ is bounded in
  terms of the initial data.  As $\sobh{1}(S^1) \subset
  \leb{\infty}(S^1)$ we may immediately infer that
  $\Norm{u}_{\leb{\infty}(S^1 \times (0,\infty)))}$ is bounded in terms
  of the initial data.
\end{remark}

\section{Reduced relative entropy}
\label{sec:rre}

In this section we briefly introduce the reduced relative entropy
technique. Using this we prove the natural stability bounds for the
problem.



\begin{lemma}[Gronwall inequality]
  \label{lem:gronwall}
  {Given $T>0$, let $\phi\in\cont{0}([0,T])$ and $a, b \in
  \leb{1}([0,T])$ all be nonnegative functions with $b$ nondecreasing and satisfying
  \begin{equation}
 \phi(t) \leq \int_0^t a(s) \phi(s) \d s + b(t).
  \end{equation}}
  Then 
  \begin{equation}
    \phi(t) \leq b(t) \exp\qp{\int_0^t a(s) \d s} \Foreach t \in [0, T].
  \end{equation}
\end{lemma}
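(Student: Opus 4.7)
The plan is to reduce this to a first-order linear differential inequality for the integral $\Psi(t) := \int_0^t a(s) \phi(s) \, \d s$, use the standard integrating-factor trick, and finally invoke the monotonicity of $b$ at the very end to replace $b(s)$ under an integral by the constant $b(t)$.

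First I would observe that the hypothesis reads $\phi(t) \leq \Psi(t) + b(t)$. Since $a \in \leb{1}$ and $\phi$ is continuous, $\Psi$ is absolutely continuous on $[0,T]$ with $\Psi'(t) = a(t)\phi(t)$ for almost every $t$. Multiplying the inequality $\phi(t) \leq \Psi(t) + b(t)$ by the nonnegative weight $a(t)$ yields
\begin{equation}
\Psi'(t) - a(t) \Psi(t) \leq a(t) b(t) \qquad \text{a.e. } t \in [0,T].
\end{equation}
Introducing the integrating factor $\eta(t) := \exp\qp{-\int_0^t a(s)\, \d s}$, which is absolutely continuous with $\eta'(t) = -a(t)\eta(t)$, this is equivalent to
\begin{equation}
\ddt \qp{\eta(t) \Psi(t)} \leq a(t) b(t) \eta(t) \qquad \text{a.e. } t \in [0,T].
\end{equation}

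Next I would integrate from $0$ to $t$, using $\Psi(0) = 0$, to obtain
\begin{equation}
\eta(t) \Psi(t) \leq \int_0^t a(s) b(s) \eta(s) \, \d s.
\end{equation}
Here comes the only delicate step: because $b$ is nondecreasing we have $b(s) \leq b(t)$ for all $s \in [0,t]$, so the right-hand side is bounded by
\begin{equation}
b(t) \int_0^t a(s) \eta(s) \, \d s = -b(t) \int_0^t \eta'(s) \, \d s = b(t)\qp{1 - \eta(t)}.
\end{equation}

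Finally I would multiply through by $\eta(t)^{-1} = \exp\qp{\int_0^t a(s)\, \d s}$ and use $\phi(t) \leq \Psi(t) + b(t)$:
\begin{equation}
\phi(t) \leq b(t) + \Psi(t) \leq b(t) + b(t)\qp{\exp\qp{\int_0^t a(s)\, \d s} - 1} = b(t) \exp\qp{\int_0^t a(s) \, \d s},
\end{equation}
which is the claim. The only nontrivial point is the use of monotonicity of $b$; without it one only recovers the weaker bound with $\int_0^t a(s) b(s) e^{\int_s^t a(\tau)\, \d\tau} \, \d s$ added to $b(t)$, which is the generic form of Gronwall.
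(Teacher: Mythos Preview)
The paper states this lemma without proof, treating it as a standard tool. Your argument is correct and is the classical integrating-factor proof of the integral Gronwall inequality: defining $\Psi(t) = \int_0^t a\phi$, deriving the linear differential inequality $(\eta\Psi)' \leq a b \eta$ with $\eta(t) = \exp(-\int_0^t a)$, integrating, and then exploiting the monotonicity of $b$ to replace $b(s)$ by $b(t)$ and collapse the remaining integral to $b(t)(1-\eta(t))$. The only technical points worth flagging are that the product rule for $(\eta\Psi)'$ is justified because both factors are absolutely continuous, and that $a\phi \in \leb{1}$ since $\phi$ is continuous on a compact interval; you have handled both implicitly and correctly.
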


\begin{definition}[Reduced relative entropy]
  The \emph{reduced relative entropy technique} is a reduction of the
  classical relative entropy technique in the sense that it only
  accounts for the convex part of the entropy. For given $v, \hatv \in \cont{0}([0,T],\leb{2}(S^1))$ and $u, \hatu \in \cont{0}([0,T],\sobh{1}(S^1))$ we define
  \begin{equation}
    \label{eq:rre}
    \eta_R(t) 
    :=
    \frac{1}{2}\int_{S^1} \qp{v(\cdot,t) - \hatv(\cdot,t)}^2 
    +
    \gamma\qp{\pd x u(\cdot,t) - \pd x \hatu(\cdot,t)}^2
    +
    \frac{\mu}{4} \int_0^t \norm{v(\cdot,s)-\hatv(\cdot,s)}_{\sobh{1}(S^1)}^2 \d s
    .
  \end{equation}
\end{definition}

\begin{theorem}[Reduced relative entropy bound]
  \label{the:rreb}
  Let $\qp{u,v}$ be a strong solution to (\ref{eq:model-prob}) and suppose
  $\qp{\hatu, \hatv}$ is a strong solution to the perturbed problem
  \begin{equation}
    \label{eq:perturbed-prob}
    \begin{split}
      \pdt \hatu - \pd x \hatv 
      &=
      0
      \\
      \pdt \hatv - \pd x W'(\hatu)
      &=
      \mu \pd {xx} \hatv - \gamma \pd{xxx} \hatu + \mathfrak{R}
    \end{split}
  \end{equation}
  where $\mathfrak{R}$ is some residual and $\gamma > 0, \mu \geq
  0$. Assume that $\hatu(\cdot,0)=u(\cdot,0)$,
  $\hatv(\cdot,0)=v(\cdot,0)$ and that
  \begin{equation}
    \label{eq:overlineM}
    \overline{M} 
    := 
    \max\qp{
      \Norm{u}_{\leb{\infty}(S^1 \times (0,\infty))}
      ,
      \Norm{\hatu}_{\leb{\infty}(S^1 \times (0,\infty))}
    }
    <
    \infty.
  \end{equation}
  Then, the reduced relative entropy between $\qp{u,v}$ and
  $\qp{\hatu, \hatv}$ satisfies
  \begin{equation}
    \label{eq:rre-bound}
    \eta_R(t) 
    \leq
    \qp{
      \eta_R(0)  
      +
      \Norm{\mathfrak{R}}_{\leb{2}(S^1\times (0,t))}^2
    }
    \exp\qp{\int_0^t K[\hatu](s)\d s}  \Foreach t,
  \end{equation}
  where
  \begin{equation}
    K[\hatu](t) 
    :=
    \max
    \qp{
      \frac{2C_P^2\overline W^2}{\gamma}\Norm{\pd x \hatu(\cdot,t)}_{\leb{\infty}(S^1)}^2
      +
      \frac{2\overline W^2}{\gamma}
      ,
      \frac{3}{2}
    }
    \quad \text{ and }
    \overline{W}= \Norm{W}_{\cont{3}[-\overline M,\overline M]},
  \end{equation}
  where $C_P$ is a Poincar\'e constant.
\end{theorem}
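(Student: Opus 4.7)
The plan is a reduced-entropy energy estimate. Subtracting (\ref{eq:model-prob}) from (\ref{eq:perturbed-prob}) and writing $e_u := u - \hat u$, $e_v := v - \hat v$, I would test the momentum error equation with $e_v$ and exploit the continuity-type identity $\partial_t e_u = \partial_x e_v$. A threefold integration by parts on the dispersive term (using periodicity) converts $-\gamma\int_{S^1}\partial_{xxx} e_u\cdot e_v$ into $-\tfrac{\gamma}{2}\tfrac{d}{dt}\|\partial_x e_u\|^2$, producing the balance identity
\begin{equation*}
\frac{d}{dt}\left[\tfrac{1}{2}\|e_v\|^2 + \tfrac{\gamma}{2}\|\partial_x e_u\|^2\right] + \mu\|\partial_x e_v\|^2 = -\int_{S^1}(W'(u)-W'(\hat u))\,\partial_x e_v - \int_{S^1}\mathfrak{R}\,e_v.
\end{equation*}
This is precisely the balance for the \emph{convex quadratic} part of the total entropy, embodying the reduced-entropy idea: no convexity of $W$ is required because $W$ enters only via the right-hand side.

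The main obstacle is controlling $\int(W'(u)-W'(\hat u))\partial_x e_v$. I would integrate by parts once more so the $x$-derivative lands on the $W$-difference, then decompose
\begin{equation*}
\partial_x\bigl(W'(u)-W'(\hat u)\bigr) = \bigl(W''(u)-W''(\hat u)\bigr)\,\partial_x \hat u + W''(u)\,\partial_x e_u.
\end{equation*}
Since $u, \hat u$ take values in $[-\overline M, \overline M]$ and $W \in \cont{3}$, both $\|W''\|_{\leb{\infty}[-\overline M, \overline M]}$ and the Lipschitz constant of $W''$ on this interval are bounded by $\overline W$. Cauchy--Schwarz produces $\|\partial_x \hat u\|_{\leb{\infty}(S^1)}\|e_u\|\|e_v\|$ and $\|\partial_x e_u\|\|e_v\|$ respectively; Poincar\'e on $e_u$ (mean-zero, inherited from the periodicity and zero-mean data) converts $\|e_u\|$ into $C_P\|\partial_x e_u\|$; Young's inequality, with weights tuned so that the resulting $\|\partial_x e_u\|^2$ coefficients re-expand as multiples of $\tfrac{\gamma}{2}\|\partial_x e_u\|^2$ and the $\|e_v\|^2$ coefficients as multiples of $\tfrac{1}{2}\|e_v\|^2$, yields upper bounds whose coefficients assemble into exactly the stated $K[\hat u](t)$.

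The residual is absorbed by Young's as $|\int \mathfrak{R}\,e_v| \le \|\mathfrak{R}\|_{\leb{2}(S^1)}^2 + \tfrac{1}{4}\|e_v\|^2$; adding a $\tfrac{1}{4}$-contribution to the $\tfrac{1}{2}\|e_v\|^2$-coefficient is what produces the $\tfrac{3}{2}$ appearing in $K[\hat u]$. To recover the $\tfrac{\mu}{4}\|e_v\|_{\sobh{1}(S^1)}^2$ component of $\eta_R$, I would split the dissipation $\mu\|\partial_x e_v\|^2$ and absorb $\tfrac{\mu}{4}\bigl(\|e_v\|^2 + \|\partial_x e_v\|^2\bigr)$ onto the left-hand side using Poincar\'e on $e_v$, the remainder being nonnegative. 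The result is the differential inequality $\tfrac{d}{dt}\eta_R(t) \le K[\hat u](t)\,\eta_R(t) + \|\mathfrak{R}\|_{\leb{2}(S^1)}^2$. Integrating in $t$ and applying Lemma \ref{lem:gronwall} with $\phi = \eta_R$, $a(s) = K[\hat u](s)$ and $b(t) = \eta_R(0) + \|\mathfrak{R}\|^2_{\leb{2}(S^1\times(0,t))}$ (noting $\eta_R(0)=0$ from the matched initial data) delivers (\ref{eq:rre-bound}).
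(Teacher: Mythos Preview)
Your approach is essentially the paper's: differentiate $\eta_R$, substitute the PDEs, integrate by parts to cancel the dispersive terms, decompose $\partial_x\bigl(W'(u)-W'(\hat u)\bigr)$ exactly as you do, apply Young and Poincar\'e on $e_u$, and close with Gronwall.

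One slip: in the definition (\ref{eq:rre}) the $\mu$-term involves the $\sobh{1}$ \emph{seminorm} $\norm{v-\hat v}_{\sobh{1}(S^1)}^2=\Norm{\partial_x e_v}^2$, not the full norm. So you only need to peel off $\tfrac{\mu}{4}\Norm{\partial_x e_v}^2$ from the dissipation $\mu\Norm{\partial_x e_v}^2$ and discard the nonnegative remainder $\tfrac{3\mu}{4}\Norm{\partial_x e_v}^2$; no Poincar\'e on $e_v$ is needed. This matters because your Poincar\'e step on $e_v$ is not justified: integrating the perturbed momentum equation over $S^1$ gives $\partial_t\int_{S^1}\hat v=\int_{S^1}\mathfrak R$, so $e_v$ need not have mean zero. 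With that correction the argument goes through and matches the paper line for line.
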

\begin{proof}
  Explicitly computing the time derivative of $\eta_R$ yields
  \begin{equation}
    \dt \eta_R(t)
    =
    \int_{S^1}
    \qp{v - \hatv} \pdt{\qp{v - \hatv}}
    +
    \gamma
    \pd x {\qp{u - \hatu}} \pd {xt} {\qp{u - \hatu}}
    +
    \frac{\mu}{4} \qp{ \partial_x \qp{ v - \hatv}}^2
    .
  \end{equation}
  Using the problem (\ref{eq:model-prob}) and the perturbed problem
  (\ref{eq:perturbed-prob}) we see that 
  \begin{equation}
    \begin{split}
    \dt \eta_R(t)
    &=
    \int_{S^1}
    \qp{v - \hatv} \qp{\pd x W'(u) - \pd x W'(\hatu) - \gamma \pd {xxx} u + \gamma \pd {xxx} \hatu + \mu \pd {xx} v - \mu \pd {xx} \hatv - \mathfrak{R}}
    \\
    &\qquad +
    \gamma
    \pd x {\qp{u - \hatu}} \qp{\pd{xx} v - \pd{xx} \hatv} 
    +  
    \frac{\mu}{4} \qp{ \partial_x  v - \partial_x\hatv}^2.
    \end{split}
  \end{equation}
  Cancellation occurs upon integrating by parts and we have
  \begin{equation}
    \label{eq:rre-1-gamma}
    \begin{split}
      \dt \eta_R(t)
      &=
      \int_{S^1}
      \qp{v - \hatv} \qp{\pd x W'(u) - \pd x W'(\hatu) - \mathfrak{R}}
      -
      \frac{3}{4}\mu \qp{\pd {x} \hatv - \pd {x} v}^2
      \\
      &\leq
      \int_{S^1}
      \qp{v - \hatv} \qp{\pd x W'(u) - \pd x W'(\hatu) - \mathfrak{R}}.
    \end{split}
  \end{equation}
  Making use of Young's inequality we have
  \begin{equation}
    \begin{split}
      \dt \eta_R(t)
      &\leq
      \overline{W}^2 \Norm{\pd x \hatu}^2_{\leb{\infty}(S^1)}\Norm{u - \hatu}^2_{\leb{2}(S^1)}
      +
      \overline{W}^2 \Norm{\pd x u - \pd x \hatu}^2_{\leb{2}(S^1)}
      \\
      &\qquad +
      \Norm{\mathfrak{R}}^2_{\leb{2}(S^1)}
      +
      \frac{3}{4} \Norm{v - \hatv}^2_{\leb{2}(S^1)}.
   \end{split}
  \end{equation}
  Invoking a Poincar\'e inequality yields
  \begin{equation}
    \begin{split}
      \dt \eta_R(t)
      &\leq
      \overline{W}^2 \qp{C_P^2 \Norm{\pd x \hatu}^2_{\leb{\infty}(S^1)} + 1}
      \Norm{\pd x u - \pd x \hatu}^2_{\leb{2}(S^1)}
      +
      \Norm{\mathfrak{R}}^2_{\leb{2}(S^1)}
      +
      \frac{3}{4} \Norm{v - \hatv}^2_{\leb{2}(S^1)}
      \\
      &\leq 
      \max\qp{\overline{W}^2 \qp{C_P^2 \Norm{\pd x \hatu}^2_{\leb{\infty}(S^1)} + 1}\frac{2}{\gamma}, \frac{3}{2}} \eta_R(t)
      +
      \Norm{\mathfrak{R}}^2_{\leb{2}(S^1)}.
    \end{split}
  \end{equation}
  The conclusion follows by invoking the Gronwall inequality given in
  Lemma \ref{lem:gronwall}.
\end{proof}

\begin{corollary}[Uniqueness of solution]
  Under the conditions of Theorem \ref{the:rreb} we have that if
  $\qp{\hatu, \hatv}$ solves (\ref{eq:model-prob}) with no
  residual term we may infer uniqueness of solution. 
\end{corollary}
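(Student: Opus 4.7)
The plan is to apply Theorem \ref{the:rreb} directly with the choice $\mathfrak{R}\equiv 0$. By the hypothesis that $(\hatu,\hatv)$ solves the same problem (\ref{eq:model-prob}) with matching initial data $\hatu(\cdot,0)=u(\cdot,0)$ and $\hatv(\cdot,0)=v(\cdot,0)$, the definition (\ref{eq:rre}) gives $\eta_R(0)=0$, and of course $\Norm{\mathfrak{R}}_{\leb{2}(S^1\times(0,t))}=0$. The right-hand side of (\ref{eq:rre-bound}) therefore collapses to zero and, since $\eta_R\geq 0$ by construction, we conclude $\eta_R(t)=0$ for every $t\in[0,T]$.

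Reading off the three nonnegative contributions in (\ref{eq:rre}), this pointwise vanishing forces $v(\cdot,t)=\hatv(\cdot,t)$ in $\leb{2}(S^1)$ and $\pd x u(\cdot,t)=\pd x \hatu(\cdot,t)$ in $\leb{2}(S^1)$ for all $t$. The first identity is already the claimed equality of velocities. To upgrade the second to $u=\hatu$, I would invoke Theorem \ref{the:existence}: both solutions live in $\cont{0}(0,T;\hkm{3}(S^1))$, so $u-\hatu$ has zero spatial mean, and the Poincar\'e--Wirtinger inequality on $S^1$ then promotes the vanishing of $\pd x(u-\hatu)$ to the vanishing of $u-\hatu$ itself.

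The only non-cosmetic point to check is that the Gronwall exponential $\exp(\int_0^t K[\hatu](s)\,\d s)$ is finite, so that the inequality really does pin $\eta_R(t)$ to zero rather than leaving an indeterminate form. This follows from the Sobolev embedding $\hkm{3}(S^1)\hookrightarrow\cont{2}(S^1)$ applied to $\hatu\in\cont{0}(0,T;\hkm{3}(S^1))$, which makes $t\mapsto\Norm{\pd x\hatu(\cdot,t)}_{\leb{\infty}(S^1)}$ continuous and hence $K[\hatu]\in\leb{1}(0,T)$. There is no genuine obstacle here; the corollary is simply the standard reading of the weak--strong type estimate (\ref{eq:rre-bound}) when the perturbation vanishes.
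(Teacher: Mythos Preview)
Your argument is correct and is precisely the intended reading of the corollary: the paper states it without proof, as an immediate consequence of Theorem~\ref{the:rreb} with $\mathfrak{R}\equiv 0$, and your unpacking (vanishing of $\eta_R$, then Poincar\'e on the mean-zero difference $u-\hatu$) is the standard way to extract uniqueness from such a stability estimate. One small remark: rather than invoking Theorem~\ref{the:existence} to obtain the zero-mean property of $u-\hatu$, you could note directly that the first equation of (\ref{eq:model-prob}) conserves the mean of $u$ (and of $\hatu$), so the matching initial data already force $\int_{S^1}(u-\hatu)=0$ for all $t$; this keeps the argument self-contained within the hypotheses of Theorem~\ref{the:rreb}.
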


\begin{remark}[Exponential dependence on problem data]
  Note that the entropy bound in Theorem \ref{the:rreb} depends
  exponentially (in time) on the Lipschitz constant of the perturbed
  solution. This is the main motivation for using reconstructions of
  the discontinuous Galerkin approximations of (\ref{eq:model-prob}).

  In addition the bound depends exponentially on
  $\tfrac{1}{\gamma}$. We may use another argument to achieve a bound
  independent of $\gamma$ but exponentially
  dependent on $\tfrac{1}{\mu}$.  
\end{remark}

\begin{theorem}[Alternative reduced relative entropy bound]
  \label{the:alt-rre-bound}
  Let the conditions of Theorem \ref{the:rreb} hold, with the
  exception that  $\mu > 0$. We define the
  \emph{modified relative entropy} as
  \begin{equation}
    \eta_M(t) := \eta_R(t) + \frac{1}{2}\Norm{u - \hatu}_{\leb{2}(S^1)}^2 ,
  \end{equation}
  then 
  \begin{equation}
    \eta_M(t) 
    \leq 
    \qp{
      \eta_M(0) + \Norm{\mathfrak{R}}^2_{\leb{2}(S^1\times (0,t))}
    }\exp\qp{\int_0^t \widetilde{K}[\hatu](s) \d s},
  \end{equation}
  where
  \begin{equation}
    \widetilde{K}[\hatu](t)
    :=
    \max\qp{ 
      \frac{4}{3\mu} 
      \qp{\overline{\overline{W}}^2  + 1}
      ,
      2
    }
    \quad
    \AND \quad
    \overline{\overline{W}} = \Norm{W}_{\cont{2}(-\overline{M},\overline{M})},
  \end{equation}
  with $\overline{M}$ defined as in (\ref{eq:overlineM}).
\end{theorem}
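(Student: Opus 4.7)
The plan is to rerun the relative-entropy calculation of Theorem \ref{the:rreb} but now exploiting the viscous dissipation, rather than the capillarity, to control the nonlinear potential contribution. This change of strategy is what trades the $1/\gamma$ appearing in $K[\hatu]$ for a $1/\mu$ in $\widetilde K[\hatu]$, and simultaneously lowers the regularity demand on $W$ from $\cont{3}$ to $\cont{2}$.

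Concretely, I would start from \eqref{eq:rre-1-gamma} in the proof of Theorem \ref{the:rreb}, stopping before the $\cont{3}$-Young step. That identity reads
\begin{equation*}
\dt \eta_R(t) = \int_{S^1}(v-\hatv)\qp{\pd x W'(u) - \pd x W'(\hatu) - \mathfrak{R}} - \tfrac{3\mu}{4}\Norm{\pd x(v-\hatv)}_{\leb{2}(S^1)}^2.
\end{equation*}
Next, I differentiate the new summand in $\eta_M$, using the first equations of \eqref{eq:model-prob} and \eqref{eq:perturbed-prob} to obtain
\begin{equation*}
\dt \tfrac{1}{2}\Norm{u-\hatu}_{\leb{2}(S^1)}^2 = \int_{S^1}(u-\hatu)\,\pd x(v-\hatv).
\end{equation*}
I then integrate by parts the $\pd x W'$ term in $\dt \eta_R$, moving the derivative onto $v-\hatv$, so that the non-residual, non-dissipative contributions to $\dt \eta_M$ combine into the single integral $\int_{S^1}\pd x(v-\hatv)\bigl[(u-\hatu) - (W'(u) - W'(\hatu))\bigr]$.

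The key step is a single weighted Young inequality applied to this combined integral, tuned so that its $\Norm{\pd x(v-\hatv)}^2$ contribution is exactly $\tfrac{3\mu}{4}\Norm{\pd x(v-\hatv)}^2$; the viscous dissipation then absorbs it in its entirety. The complementary part is estimated via the mean-value bound $|W'(u)-W'(\hatu)|\leq \overline{\overline W}\,|u-\hatu|$ (which is the place where only $W\in\cont{2}$ is needed), producing a constant proportional to $\tfrac{1}{\mu}\qp{\overline{\overline W}^2+1}\Norm{u-\hatu}^2$. A further standard Young inequality $|\int(v-\hatv)\mathfrak{R}|\leq \alpha\Norm{v-\hatv}^2 + \tfrac{1}{4\alpha}\Norm{\mathfrak{R}}^2$ handles the residual, with $\alpha$ chosen to produce the constants stated in the theorem.

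Finally, using the obvious inequalities $\tfrac{1}{2}\Norm{u-\hatu}^2 \leq \eta_M$ and $\tfrac{1}{2}\Norm{v-\hatv}^2 \leq \eta_R \leq \eta_M$, I obtain a differential inequality of the form $\dt \eta_M(t) \leq \widetilde K[\hatu](t)\,\eta_M(t) + \Norm{\mathfrak{R}(\cdot,t)}_{\leb{2}(S^1)}^2$. A final application of the Gronwall inequality (Lemma \ref{lem:gronwall}) with the nondecreasing function $b(t) = \eta_M(0) + \Norm{\mathfrak{R}}_{\leb{2}(S^1\times(0,t))}^2$ produces the claimed exponential estimate. The main obstacle I anticipate is the Young-inequality bookkeeping: a single viscous dissipation budget $-\tfrac{3\mu}{4}\Norm{\pd x(v-\hatv)}^2$ has to absorb both the nonlinear potential contribution \emph{and} the new cross-term from $\dt \tfrac{1}{2}\Norm{u-\hatu}^2$, so combining them into one inner product before applying Young, as described above, is essential to avoid exhausting the dissipation twice.
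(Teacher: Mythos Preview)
Your proposal is correct and follows essentially the same route as the paper's proof: start from \eqref{eq:rre-1-gamma}, integrate the $\pd x W'$ term by parts, add the identity $\dt\tfrac{1}{2}\Norm{u-\hatu}^2=\int_{S^1}(u-\hatu)\pd x(v-\hatv)$, apply a weighted Young inequality tuned so the viscous dissipation $-\tfrac{3\mu}{4}\Norm{\pd x(v-\hatv)}^2$ absorbs the resulting $\pd x(v-\hatv)$ contributions, bound $|W'(u)-W'(\hatu)|\leq\overline{\overline W}\,|u-\hatu|$, and conclude via Gronwall. The only cosmetic difference is that the paper applies Young's inequality separately to the two cross terms with the common weight $\epsilon=3\mu/8$, whereas you first combine them into a single inner product; the resulting constants are identical.
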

\begin{proof}
  The equality of (\ref{eq:rre-1-gamma}) shows that
  \begin{equation}
    \label{eq:alt-rre-1}
    \begin{split}
      \dt \eta_R(t)
      &=
      \int_{S^1}
      \qp{v - \hatv} \qp{\pd x W'(\hatu) - \pd x W'(u) - \mathfrak{R}}
      -
      \frac{3}{4}\mu \qp{\pd {x} \hatv - \pd {x} v}^2
      \\
      &=
      \int_{S^1}
      \qp{\pd x v - \pd x \hatv} \qp{W'(u) - W'(\hatu)}
      +
      \qp{\hatv - v} \mathfrak{R}
      -
      \mu \qp{\pd {x} \hatv - \pd {x} v}^2
    \end{split}
  \end{equation}
  upon integrating by parts. We also see that
  \begin{equation}   \label{eq:alt-rre-2}
    \frac{1}{2} \dt \Norm{u - \hatu}_{\leb{2}(S^1)}^2
    =
    \int_{S^1} 
    \qp{u - \hatu} \qp{\pd t u - \pd t \hatu} 
    =
    \int_{S^1} 
    \qp{u - \hatu} \qp{\pd x v - \pd x \hatv},
  \end{equation}
  where we used (\ref{eq:perturbed-prob})$_1$.
  Taking the sum of (\ref{eq:alt-rre-1}) and (\ref{eq:alt-rre-2}) we
  have that
  \begin{equation}
    \begin{split}
      \dt
      \eta_M(t)
      &\leq
      \int_{S^1} 
      \qp{\pd x v - \pd x \hatv} \qp{W'(\hatu) - W'(u)}
      +
      \qp{\hatv - v} \mathfrak{R}
      \\
      & \qquad -
       \frac{3}{4}\mu \qp{\pd {x} \hatv - \pd {x} v}^2
      +
      \qp{u - \hatu} \qp{\pd x v - \pd x \hatv}.
    \end{split}
  \end{equation}
  Applying Young's inequality (with $\epsilon>0$) we see that
  \begin{equation}
    \begin{split}
      \dt
      \eta_M(t)
      &\leq
      \int_{S^1} 
      \qp{2\epsilon - \frac{3}{4} \mu}\qp{\pd x \hatv - \pd x v}^2
      +
      \frac{1}{4\epsilon}
      \qp{
        \qp{W'(\hatu) - W'(u)}^2
        + 
        \qp{u - \hatu}^2
      }
      \\
      &\qquad\qquad\qquad\qquad\qquad\qquad\qquad\qquad\qquad\qquad\qquad
      +
      \qp{v - \hatv}^2
      +
      \mathfrak R^2.
    \end{split}
  \end{equation}
  Choosing $\epsilon = \tfrac{3\mu}{8}$ we have
  \begin{equation}
    \begin{split}
      \dt \eta_M(t)
      &\leq
      \frac{2}{3\mu} 
      \qp{\overline{\overline{W}}^2+ 1}
      \Norm{u - \hatu}_{\leb{2}(S^1)}^2
      +
      \Norm{\mathfrak{R}}_{\leb{2}(S^1)}^2
      +
      \Norm{\hatv - v}_{\leb{2}(S^1)}^2
      \\
      &\leq 
      \max\qp{ 
        \frac{4}{3\mu} 
        \qp{\overline{\overline{W}}^2 + 1}
        ,
        2
      }
      \eta_M(t)
      +
      \Norm{\mathfrak{R}(\cdot,t)}_{\leb{2}(S^1)}^2.
    \end{split}
  \end{equation}
  Applying the Gronwall inequality from Lemma \ref{lem:gronwall}
  yields the desired result.
\end{proof}

\section{Discretisation and a posteriori setup}
\label{sec:discretisation}

In this section we describe the discretisation which we analyse for
the approximation of (\ref{eq:model-prob}). We show that the scheme
has a monotonically decreasing energy functional and that solutions to
the scheme exist and are bounded in terms of the initial datum. In
addition we introduce the necessary reconstruction operators on which
our a posteriori analysis relies.  
\begin{definition}[Finite element space]
  We discretise (\ref{eq:model-prob}) in space using a dG finite
  element method. To that end we let $S^1 :=[0,1]$ be the unit
  interval with matching endpoints and choose $0 = x_0 < x_1 < \dots <
  x_N = 1.$ We denote $K_i=[x_i,x_{i+1}]$ to be the $i$--th
  subinterval and let $h_i:= \norm{K_i}$ be its length with $\T{} = \{
  K_i \}_{i=0}^{N-1}$. We impose that the ratio $h_n/h_{n+1}$ is
  bounded from above and below for $n=0,\dots,N-1.$ We set $\E$ to be
  the set of common interfaces of $\T{}$. For $x_n \in \E$ we define
  $h_{\E}^-(x_n) := h_{n-1}$, $h_{\E}^+(x_n) := h_{n}$ and
  $h_{\E}(x_n) :=\frac{1}{2}(h_{n-1}+ h_{n})$ such that
  $h_{\E}^+,h_{\E}^-,h_{\E} \in \leb{\infty}(\E).$ Let $\poly p$ be
  the space of polynomials of degree less than or equal to $p$, then
  we introduce the \emph{finite element space}
  \begin{equation}
    \fes_p 
    :=
    \ensemble{\Phi : I \to \rR }
    { \Phi \vert _{K_i} \in \poly p{(K_i)}}.
  \end{equation}
\end{definition}

\begin{definition}[Broken Sobolev spaces]
  \label{defn:broken-sobolev-space}
  We introduce the broken Sobolev space
  \begin{equation}
    \sobh{k}(\T{})
    :=
    \ensemble{\phi}
             {\phi|_K\in\sobh{k}(K), \text{ for each } K \in \T{}}.
  \end{equation}
  We also make use of functions defined in these broken spaces
  restricted to the skeleton of the triangulation.
\end{definition}

\begin{definition}[Jumps and averages]
  \label{defn:averages-and-jumps}
  We define average, jump operators for arbitrary
  scalar functions $v\in\sobh{k}(\T{})$ as
  \begin{gather}
    \avg{v} :=  {\frac{1}{2}\qp{v^+ + v^-}}:= \frac{1}{2}\qp{ \lim_{s \searrow 0} v(\cdot + s) +\lim_{s \searrow 0} v(\cdot - s)}  
    ,
    \\
    \jump{v} := {\qp{v^- - v^+}}:=  \lim_{s \searrow 0} v(\cdot - s) -\lim_{s \searrow 0} v(\cdot + s).
  \end{gather}
  Note that $\jump{v},\avg{v} \in \leb{2}(\E).$
\end{definition}

We will often use the following Proposition which we state in full for
clarity but whose proof is merely using the identities in Definition
\ref{defn:averages-and-jumps}.
\begin{proposition}[Elementwise integration]
  \label{Pro:trace-jump-avg}
  For generic functions $\psi,\phi \in \sobh{1}(\T{})$ we have
  \begin{equation}
    \label{eq:jump-avg-eq1}
    \begin{split}
      \sum_{K\in\T{}}
      \int_K (\pd x \psi) \phi 
      =
      \sum_{K\in\T{}}
      \qp{
        -
        \int_K
        \psi \pd x \phi 
        +
        \int_{\partial K}
        \phi \psi n_K 
        },
    \end{split}
  \end{equation}
  where $n_K$ is the outward pointing unit normal to $\partial K.$
  Furthermore the following identity holds
  \begin{equation}
    \label{eq:jump-avg-eq2}
    \sum_{K\in\T{}}
    \int_{\partial K}
    \phi \psi
    n_K 
    =
    \int_\E 
    \jump{\psi} 
    \avg{\phi}
    +
    \int_{\E}
    {\jump{\phi}}
    \avg{\psi}
    =
    \int_{\E}
    \jump{\psi \phi}
.
  \end{equation}
\end{proposition}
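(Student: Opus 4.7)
The proof is essentially bookkeeping: identity \eqref{eq:jump-avg-eq1} is classical elementwise integration by parts, and \eqref{eq:jump-avg-eq2} is an algebraic identity at each interface once the definitions of jumps and averages are unpacked. The plan is to handle the two identities in turn.

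For \eqref{eq:jump-avg-eq1}, I would apply the one-dimensional integration-by-parts formula on each $K = [x_i,x_{i+1}] \in \T{}$ separately, using that $\psi|_K, \phi|_K \in \sobh{1}(K)$ admit well-defined traces at the endpoints. This gives
\begin{equation*}
  \int_K (\pd x \psi) \phi = -\int_K \psi \pd x \phi + \int_{\partial K} \psi \phi \, n_K,
\end{equation*}
where $n_K \in \{-1,+1\}$ is the outward unit normal. Summing over $K \in \T{}$ yields the first claim; no compatibility between elements is needed here, so nothing subtle occurs.

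For \eqref{eq:jump-avg-eq2}, I would rewrite the sum over element boundaries as a sum over the skeleton $\E$. Since we work on $S^1$ with periodic endpoints, every node $x_n \in \E$ is shared by exactly two elements, say $K^- = [x_{n-1},x_n]$ on the left and $K^+ = [x_n,x_{n+1}]$ on the right, with outward normals $n_{K^-}(x_n) = +1$ and $n_{K^+}(x_n) = -1$. The contribution of $x_n$ to $\sum_K \int_{\partial K} \phi\psi\, n_K$ is therefore $(\phi\psi)^-(x_n) - (\phi\psi)^+(x_n) = \jump{\phi\psi}(x_n)$, which already establishes the rightmost equality in \eqref{eq:jump-avg-eq2}.

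The remaining task is to show that $\jump{\phi\psi} = \jump{\psi}\avg{\phi} + \jump{\phi}\avg{\psi}$ pointwise on $\E$. This is a direct expansion: substituting the definitions of $\avg{\cdot}$ and $\jump{\cdot}$ from Definition \ref{defn:averages-and-jumps} and expanding gives
\begin{equation*}
  \jump{\psi}\avg{\phi} + \jump{\phi}\avg{\psi} = \tfrac{1}{2}\bigl[(\psi^- - \psi^+)(\phi^+ + \phi^-) + (\phi^- - \phi^+)(\psi^+ + \psi^-)\bigr] = \psi^-\phi^- - \psi^+\phi^+,
\end{equation*}
which equals $\jump{\psi\phi}$. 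I do not anticipate any real obstacle; the only point requiring a mild care is the sign convention that makes the two normals at a shared interface opposite, and the use of periodicity to ensure every interface is interior to the skeleton so that no boundary contribution survives.
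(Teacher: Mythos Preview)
Your proposal is correct and is exactly what the paper has in mind: the paper does not write out a proof but merely remarks that the identities follow directly from the definitions of jumps and averages in Definition~\ref{defn:averages-and-jumps}, and your argument carries this out in full detail.
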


\renewcommand{\enorm}[1]{\norm{#1}_{dG}}
\begin{definition}[Discrete norm]
  We introduce the broken $\sobh{1}(\T{})$ seminorm as
  \begin{equation}
    \enorm{u_h}^2 
    := 
    \sum_{K\in\T{}} \Norm{\pd x u_h}_{\leb{2}(K)}^2
    +   
    \Norm{ \sqrt{h_{\E}^{-1}} \jump{u_h}}_{\leb{2}(\E)}^2.
  \end{equation}
\end{definition}

\begin{definition}[Discrete gradient operators]
  \label{def:discrete-gradient}
  We define the discrete gradient operators $G^\pm:\sobh1\qp{\T{}} \to
  \fes_p$ such that
  \begin{equation}
    \int_{S^1} G^\pm[\psi] \Phi 
    =
    \sum_{K\in\T{}}\int_K \pd x \psi \Phi - \int_\E \jump{\psi} \Phi^\pm \Foreach \Phi\in\fes_p.
  \end{equation}
  Note that if $\psi\in\sobh1(S^1)$ then $G^\pm[\psi]$ is the $\leb{2}$
  projection of $\pd x \psi$ onto $\fes_p$.
\end{definition}

\begin{proposition}[Discrete integration by parts]
  \label{pro:discrete-int-by-parts}
  Given $G^\pm:\sobh1(\T{}) \to \fes_p$ we have that
  \begin{equation}
    \int_{S^1} G^\pm[\Psi] \Phi = -\int_{S^1} \Psi G^\mp[\Phi] \Foreach \Psi,\Phi\in\fes_p.
  \end{equation}
\end{proposition}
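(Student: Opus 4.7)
The plan is to prove the identity by applying the definition of $G^{\pm}$ to both sides and reducing the whole statement to a pointwise check on the edge set $\E$.

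First I would apply Definition \ref{def:discrete-gradient} directly. Since $\Psi,\Phi\in\fes_p$ are both admissible as inputs and test functions, I can write
\begin{equation*}
  \int_{S^1} G^+[\Psi]\,\Phi
  =
  \sum_{K\in\T{}}\int_K (\pd x \Psi)\,\Phi
  -
  \int_{\E}\jump{\Psi}\,\Phi^+,
\end{equation*}
and, after multiplying by $-1$,
\begin{equation*}
  -\int_{S^1}\Psi\,G^-[\Phi]
  =
  -\sum_{K\in\T{}}\int_K (\pd x \Phi)\,\Psi
  +
  \int_{\E}\jump{\Phi}\,\Psi^-.
\end{equation*}
Subtracting, the claim reduces to showing
\begin{equation*}
  \sum_{K\in\T{}}\int_K \pd x(\Psi\Phi)
  =
  \int_{\E}\jump{\Psi}\,\Phi^+ + \int_{\E}\jump{\Phi}\,\Psi^-.
\end{equation*}

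Next I would handle the left-hand side using classical elementwise integration by parts, which, combined with Proposition \ref{Pro:trace-jump-avg}, gives
\begin{equation*}
  \sum_{K\in\T{}}\int_K \pd x(\Psi\Phi)
  =
  \sum_{K\in\T{}}\int_{\partial K}\Psi\Phi\,n_K
  =
  \int_{\E}\jump{\Psi\Phi}.
\end{equation*}
Thus the problem collapses to verifying the pointwise identity on $\E$,
\begin{equation*}
  \jump{\Psi\Phi}
  =
  \jump{\Psi}\,\Phi^+ + \Psi^-\,\jump{\Phi}.
\end{equation*}

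The last step is a short algebraic check: expanding the right-hand side using $\jump{w}=w^- - w^+$ yields
\begin{equation*}
  (\Psi^- - \Psi^+)\Phi^+ + \Psi^-(\Phi^- - \Phi^+)
  =
  \Psi^-\Phi^- - \Psi^+\Phi^+
  =
  \jump{\Psi\Phi},
\end{equation*}
so the identity holds and the proof is complete. The argument for the opposite sign choice (i.e.\ $G^-[\Psi]$ tested against $\Phi$) is symmetric: the only asymmetry between the two discrete gradients is which trace ($\Phi^+$ versus $\Psi^-$) carries the jump, and the product-rule identity above absorbs precisely this asymmetry. There is no real obstacle here beyond careful bookkeeping of the $\pm$ traces; the whole content of the proposition is the algebraic compatibility between the definitions of $G^+$ and $G^-$ and the skeleton identity in Proposition \ref{Pro:trace-jump-avg}.
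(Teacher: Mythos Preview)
Your proof is correct and follows essentially the same approach as the paper: both use the definition of $G^\pm$, elementwise integration by parts via Proposition~\ref{Pro:trace-jump-avg}, and the pointwise jump identity $\jump{\Psi\Phi}=\jump{\Psi}\Phi^+ + \Psi^-\jump{\Phi}$. The only cosmetic difference is that the paper writes a direct chain of equalities while you expand both sides and subtract.
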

\begin{proof}
  The proof follows immediately from the definition of $G^\pm[\cdot]$ and
  the elementwise integration formulae in Proposition \ref{Pro:trace-jump-avg}. Indeed,
  \begin{equation}
    \begin{split}
      \int_{S^1} G^\pm[\Psi] \Phi 
      &=
      \sum_{K\in\T{}}\int_K \pd x \Psi \Phi - \int_\E \jump{\Psi} \Phi^\pm
      \\
      &=
      \sum_{K\in\T{}} - \int_K \Psi \pd x \Phi + \int_\E \jump{\Phi} \Psi^\mp
      \\
      &=
      -\int_{S^1} \Psi G^\mp[\Phi],
    \end{split}
  \end{equation}
  as required.
\end{proof}

\subsection{Discrete scheme}

We will examine the following class of semi-discrete numerical schemes
where we seek $\qp{u_h,v_h,\tau_h} \in \cont{1}(0,T;\rV_p)\times
\cont{1}(0,T;\rV_p)\times \cont{0}(0,T;\rV_p)$ determined such
that
\begin{equation}
  \label{eq:discrete-scheme}
  \begin{split}
    0 &= \int_{S^1} \pd t u_h \Phi - G^-[v_h] \Phi  \quad \Foreach \Phi \in \fes_p
    \\
    0 &= \int_{S^1} \pd t v_h \Psi - G^+[\tau_h] \Psi + \mu G^-[v_h] G^-[\Psi] \quad \Foreach \Psi \in \fes_p
    \\
    0 &= \gamma \bih{u_h}{\Zeta} + \int_{S^1} \tau_h \Zeta - W'(u_h) \Zeta \quad \Foreach \Zeta \in \fes_p,
  \end{split}
\end{equation}
where $\cA_h:\fes_p\times\fes_p \to\reals$ is a consistent, symmetric
bilinear form representing a discretisation of the Laplacian. We
impose that it is coercive with respect to the dG seminorm on
$\fes_p$. 

The initial data for the semi-discrete scheme are given as follows:
$u_h(\cdot,0)$ is the Ritz projection of $u_0$ with respect to
$\cA_h$, that is, $u_h$ satisfies
\begin{equation}
  \label{eq:uh-init}
  \bih{u_h(\cdot,0)}{\Phi} = \bih{u_0}{\Phi} \Foreach \Phi\in\fes_p,
\end{equation}
with the additional constraint of having the same mean value. We also
define $v_h(\cdot,0)$ to be the $\leb{2}$ projection of $v_0$, that is,
\begin{equation}
  \label{eq:vh-init}
  \int_{S^1} v_h(\cdot, 0) \Psi = \int_{S^1} v_0 \Psi \Foreach \Psi\in\fes_p.
\end{equation}


\begin{proposition}[Energy consistency]
  \label{pro:energy-consistency}
  The discretistaion (\ref{eq:discrete-scheme}) is energy consistent
  in the sense that there is a monotonically decreasing energy
  functional
  \cite{GiesselmannMakridakisPryer:2013,GiesselmannPryer:2013}. Let
  $(u_h,v_h,\tau_h)$ be a solution of \eqref{eq:discrete-scheme}, then
  the following equality holds for $0 < t <T$
  \begin{equation}
    \ddt \qp{\gamma \bih{u_h}{u_h} + \int_{S^1} W(u_h) + \frac{1}{2} \norm{v_h}^2}
    =
    - \mu \int_{S^1} \norm{G^-[v_h]}^2.
  \end{equation}
\end{proposition}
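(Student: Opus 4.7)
The plan is to imitate the continuous energy identity of Lemma \ref{lem:energy} at the discrete level by testing each of the three equations in \eqref{eq:discrete-scheme} with the right element of $\fes_p$ so that the discrete integration by parts for $G^{\pm}$ (Proposition \ref{pro:discrete-int-by-parts}) and the symmetry of $\cA_h$ deliver the claim directly. The crucial idea is that the $\tau_h$--$G^-[v_h]$ contributions arising from the strain and momentum equations must exactly cancel upon summation, leaving only the balance between the time derivative of the energy and the $\mu$-dissipation.

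First I would test the constitutive equation \eqref{eq:discrete-scheme}$_3$ with $\Zeta = \pd t u_h$, which is admissible because $u_h\in\cont{1}(0,T;\fes_p)$ implies $\pd t u_h \in \fes_p$. This produces
\begin{equation*}
  \int_{S^1} W'(u_h) \pd t u_h \;=\; \gamma \bih{u_h}{\pd t u_h} + \int_{S^1} \tau_h \pd t u_h,
\end{equation*}
whose left-hand side is exactly $\ddt \int_{S^1} W(u_h)$ and whose first right-hand term reassembles, by symmetry of $\cA_h$, into the time derivative of (a scalar multiple of) $\bih{u_h}{u_h}$. Next I would test the strain equation \eqref{eq:discrete-scheme}$_1$ with $\Phi = \tau_h \in \fes_p$ to obtain $\int_{S^1} \pd t u_h\, \tau_h = \int_{S^1} G^-[v_h]\, \tau_h$, which is the bridge linking the strain and momentum balances. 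Finally, testing the momentum equation \eqref{eq:discrete-scheme}$_2$ with $\Psi = v_h$ yields
\begin{equation*}
  \tfrac{1}{2} \ddt \norm{v_h}^2 \;=\; \int_{S^1} G^+[\tau_h]\, v_h \;-\; \mu \int_{S^1} \qp{G^-[v_h]}^2,
\end{equation*}
which already contains the asserted dissipation.

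Adding the three identities and using Proposition \ref{pro:discrete-int-by-parts} to rewrite $\int_{S^1} G^+[\tau_h]\, v_h = -\int_{S^1} \tau_h\, G^-[v_h]$, the two occurrences of $\int_{S^1} \tau_h\, G^-[v_h]$ cancel, producing precisely the asserted energy identity. There is no serious obstacle beyond the bookkeeping: one must verify the admissibility of the test functions (immediate from $u_h, v_h, \tau_h \in \fes_p$ together with the stated temporal regularity of $u_h$), and check that the sign convention in \eqref{eq:discrete-scheme}$_3$ lines up with the coercivity of $\cA_h$ so that the $\tau_h G^-[v_h]$ contributions annihilate rather than reinforce. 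The structure of the argument is the exact discrete counterpart of the continuous computation in Lemma \ref{lem:energy}, with $G^{\pm}$ and $\cA_h$ playing the roles of the spatial derivative and $-\pd{xx}$ respectively.
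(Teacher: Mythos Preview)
Your proposal is correct and follows essentially the same route as the paper: test \eqref{eq:discrete-scheme}$_1$ with $\Phi=\tau_h$, \eqref{eq:discrete-scheme}$_2$ with $\Psi=v_h$, \eqref{eq:discrete-scheme}$_3$ with $\Zeta=\pd t u_h$, then combine using the discrete integration by parts $\int_{S^1} G^+[\tau_h]\,v_h=-\int_{S^1}\tau_h\,G^-[v_h]$ and the symmetry of $\cA_h$. The only cosmetic difference is ordering---the paper first sums the tested forms of \eqref{eq:discrete-scheme}$_1$ and \eqref{eq:discrete-scheme}$_2$ and then substitutes from \eqref{eq:discrete-scheme}$_3$, whereas you add all three at the end---but the content is identical.
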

\begin{proof}
  Taking $\Phi = \tau_h$ in (\ref{eq:discrete-scheme})$_1$, $\Psi =
  v_h$ in (\ref{eq:discrete-scheme})$_2$ and taking the sum yields
  \begin{equation}
    \label{eq:energy-eq-1}
    0 
    =
    \int_{S^1} 
    \pd t u_h \tau_h
    -
    G^-[v_h] \tau_h 
    +
    \pd t v_h v_h
    -
    G^+[\tau_h] v_h 
    +
    \mu G^-[v_h] G^-[v_h].
  \end{equation}
  Taking $\Zeta = \pd t u_h$ in (\ref{eq:discrete-scheme})$_3$ and
  substituting into (\ref{eq:energy-eq-1}) we see
  \begin{equation}
    0 
    = 
    \gamma \bih{u_h}{\pd t u_h}
    +
    \int_{S^1} 
    W'(u_h) \pd t u_h
    +
    \pd t v_h v_h
    +
    \mu G^-[v_h] G^-[v_h],
  \end{equation}
  which gives
  \begin{equation}
    0 =
    \ddt \qp{
      \gamma \bih{u_h}{u_h}
      +
      \int_{S^1}
      W(u_h)
      +
      \frac{1}{2} v_h^2
    }
    +
    \mu \int_{S^1}
    |G^-[v_h]|^2,
  \end{equation}
  yielding the desired result.
\end{proof}

\begin{remark}[$\leb{\infty}$ bound of $u_h$]
  Proposition \ref{pro:energy-consistency} shows that the energy
  functional
  \begin{equation}
    \gamma
    \bih{u_h}{u_h} + \int_{S^1} W(u_h) + \frac{1}{2} \norm{v_h}^2 
  \end{equation}
  is non-increasing in time. Due to the coercivity of $\cA_h$
  this implies $\enorm{u_h(\cdot,t)}^2$ is uniformly bounded in
  time, in terms of the initial data $u_0,v_0$.  Thus we have that
  $\Norm{u_h}_{\leb{\infty}(S^1\times (0,T))}^2$ is bounded (in terms
  of the initial data) since the average value of $u_h$ is conserved in time.
\end{remark}

\begin{lemma}[Existence of solutions of \eqref{eq:discrete-scheme} {\cite[Lem 3.7]{GiesselmannPryer:2014}}]
  Solutions of \eqref{eq:discrete-scheme} with initial data defined in
  (\ref{eq:uh-init})--(\ref{eq:vh-init}) exist for arbitrarily long
  times $T$.
\end{lemma}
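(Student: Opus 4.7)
The plan is to reduce the semidiscrete system \eqref{eq:discrete-scheme} to a finite-dimensional ODE on the coefficient space of $\fes_p \times \fes_p$, and then combine Picard--Lindel\"of local existence with the a priori bound coming from Proposition \ref{pro:energy-consistency} to rule out finite-time blowup.

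First I would eliminate $\tau_h$ algebraically. The third equation of \eqref{eq:discrete-scheme} reads
\begin{equation*}
  \int_{S^1} \tau_h \Zeta = \int_{S^1} W'(u_h) \Zeta - \gamma \bih{u_h}{\Zeta} \quad \Foreach \Zeta \in \fes_p,
\end{equation*}
which uniquely determines $\tau_h \in \fes_p$ in terms of $u_h$, because the left-hand side is nothing but the $\leb 2$ inner product on $\fes_p$. Since $W \in \cont 3(\rR)$ and $\bih{\cdot}{\cdot}$ is linear, the map $u_h \mapsto \tau_h[u_h]$ is $\cont 2$ (in fact locally Lipschitz) as a map between finite-dimensional spaces. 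Substituting back into the first two equations gives an autonomous system
\begin{equation*}
  \pd t u_h = F_1(u_h,v_h), \qquad \pd t v_h = F_2(u_h,v_h),
\end{equation*}
where $F_1,F_2$ are locally Lipschitz functions of the finite-dimensional coefficient vectors (the $\leb 2$ mass matrix on $\fes_p$ is invertible, and the operators $G^\pm$ are linear).

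Next I would invoke the Picard--Lindel\"of theorem to obtain a unique maximal solution $(u_h,v_h)\in\cont 1([0,T_{\max}),\fes_p \times \fes_p)$ with the prescribed initial data. The standard dichotomy states that either $T_{\max}=\infty$ or the coefficient norm of $(u_h,v_h)$ blows up as $t\to T_{\max}^-$. To rule out the latter, I would invoke the energy identity of Proposition \ref{pro:energy-consistency}, together with $W\ge 0$ and the coercivity of $\bih{\cdot}{\cdot}$ relative to $\enorm{\cdot}$, to infer the uniform bound
\begin{equation*}
  \gamma\,\enorm{u_h(\cdot,t)}^2 + \Norm{v_h(\cdot,t)}_{\leb 2(S^1)}^2 \lesssim \gamma\,\bih{u_h(\cdot,0)}{u_h(\cdot,0)} + \int_{S^1} W(u_0) + \tfrac{1}{2}\Norm{v_h(\cdot,0)}_{\leb 2(S^1)}^2
\end{equation*}
for every $t\in[0,T_{\max})$. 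Conservation of the spatial mean of $u_h$ (obtained by testing the first equation of \eqref{eq:discrete-scheme} with the constant function $1\in\fes_p$, using the fact that $G^-[v_h]$ has vanishing mean) then upgrades this to a uniform $\sobh 1_m$-type bound, hence uniform boundedness of the coefficient vector of $u_h$ in the finite-dimensional space. The bound on $v_h$ is already direct.

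The only mildly subtle point, and the one I would pay closest attention to, is that the a priori bound must control the coefficient norm used by the ODE existence theorem, not merely a seminorm. This is why recovering the mean value of $u_h$ via the first equation of \eqref{eq:discrete-scheme} matters, and why norm equivalence on the finite-dimensional space $\fes_p$ is invoked. Once these two bounds are in hand, the blow-up alternative is excluded, so $T_{\max}=\infty$ and the desired global-in-time existence follows.
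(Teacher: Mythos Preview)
Your argument is correct and is exactly the standard route for such semidiscrete systems: eliminate $\tau_h$ via the third equation, apply Picard--Lindel\"of to the resulting locally Lipschitz finite-dimensional ODE, and preclude blow-up using the energy identity of Proposition~\ref{pro:energy-consistency} together with mean-value conservation and norm equivalence on $\fes_p$. The paper itself does not give a proof but defers to \cite[Lem~3.7]{GiesselmannPryer:2014}; your outline is precisely the argument one expects there, so there is nothing substantively different to compare.
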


\begin{theorem}[A priori estimates for the scheme {\cite[Thm 6.1]{GiesselmannPryer:2014}}]
  \label{the:apriori}
  Let the exact solution $(u,v)$ of (\ref{eq:model-prob}) satisfy 
  \begin{equation}
    \begin{split}
      u &\in \cont{1}(0,T; \sobh{q+2}(S^1)) \cap \cont{0}(0,T;  \cont{q+3}(S^1)) \\
      v &\in \cont{1}(0,T;  \cont{q+2}(S^1)) \cap \cont{0}(0,T;  \cont{q+3}(S^1))
    \end{split}
  \end{equation}
  and let $W \in \cont{q+3}(\rR,[0,\infty)).$ In addition suppose $u_h, v_h, \tau_h$ are solutions of the semidiscrete scheme \eqref{eq:discrete-scheme}.
  Then there exists $C>0$ independent of $h,$ but depending on
  $q,T,\gamma, \frac{h}{\min_n h_n},u,v$ such that
  \begin{equation}
    \begin{split}
      \sup_{0 \leq t \leq T} \bigg(&\Norm{u_h(\cdot,t) - u(\cdot,t)}_{\operatorname{dG}} + \Norm{v_h(\cdot,t) - v(\cdot,t)}_{\leb{2}}\bigg) \\
      &\leq C h^q\bigg( \Norm{u}_{\leb{\infty}(0,T; \cont{q+3}(S^1))} + \Norm{v}_{\leb{\infty}(0,T; \cont{q+3}(S^1))}+\Norm{\pd t v}_{\leb{\infty}(0,T; \cont{q+2}(S^1))}\bigg) .
    \end{split}
  \end{equation}
\end{theorem}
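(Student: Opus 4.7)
The plan is to deploy the reduced relative entropy framework of Theorem \ref{the:rreb} against appropriate reconstructions of the semidiscrete solution, reducing the a priori error bound to standard approximation estimates for the reconstructions. First I would introduce an elliptic reconstruction $\hatu$ of $u_h$ defined by the requirement $\bih{\hatu}{\Phi} = \bih{u_h}{\Phi}$ for all $\Phi \in \fes_p$ together with a matching mean value (this is where the symmetry and coercivity of $\cA_h$ enter), so that $\hatu$ is $\sobh{3}$-regular and the error $\hatu - u_h$ enjoys the usual elliptic reconstruction estimates of \cite{GiesselmannMakridakisPryer:2014} in the $\enorm{\cdot}$ norm with rate $h^q$. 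For the velocity I would use a discrete reconstruction $\hatv$ of $v_h$, in the spirit of Makridakis--Nochetto, chosen so that $G^-[v_h]$ equals $\pd x \hatv$ up to a controllable remainder; this is what transforms the discontinuous $G^\pm$ operators of \eqref{eq:discrete-scheme} back into true derivatives.

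Next I would substitute $(\hatu,\hatv)$ into \eqref{eq:perturbed-prob} and compute the resulting residual $\mathfrak{R}$ by combining the scheme \eqref{eq:discrete-scheme} with the definitions of the reconstructions. The residual splits additively into (i) differences between $u_h,v_h,\tau_h$ and their reconstructions, (ii) the mismatch between $\tau_h$ and $W'(u_h) - \gamma \pd{xx}\hatu$ coming from \eqref{eq:discrete-scheme}$_3$, and (iii) time-derivative terms of the reconstruction errors. Each of these pieces is controlled by the approximation properties of the reconstructions and their $\pd t$-versions, together with the $\leb\infty$ bound on $u_h$ from the remark following Proposition \ref{pro:energy-consistency} and the assumed $\cont{q+3}$ regularity of $W$, yielding $\Norm{\mathfrak{R}}_{\leb{2}(S^1\times(0,T))} \leq C h^q$ with $C$ depending on the quoted norms of $u,v,\pd t v$.

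I would then apply Theorem \ref{the:rreb} to the pair $(u,v)$ and $(\hatu,\hatv)$. The hypothesis $\Norm{\hatu}_{\leb\infty(S^1\times(0,\infty))} < \infty$ follows by writing $\hatu = u + (\hatu - u_h) + (u_h - u)$ and invoking $\sobh 1 \hookrightarrow \leb\infty$ combined with the a priori $h^q$-closeness; likewise the exponential factor $\exp(\int_0^t K[\hatu])$ is uniformly bounded because $\Norm{\pd x \hatu}_{\leb\infty}$ is dominated by $\Norm{\pd x u}_{\leb\infty}$ up to an $h^q$ perturbation that stays $O(1)$. The initial error $\eta_R(0)$ vanishes for the exact initial data; for the discrete initial data \eqref{eq:uh-init}--\eqref{eq:vh-init} one gets $\eta_R(0) = O(h^{2q})$ from the standard $\leb 2$ and Ritz projection estimates. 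Combining these gives $\eta_R(t)^{1/2} \leq C h^q$.

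Finally I would convert the reduced relative entropy bound into the statement of the theorem via two triangle inequalities: $\enorm{u_h - u} \leq \enorm{u_h - \hatu} + \enorm{\hatu - u}$ and $\Norm{v_h - v}_{\leb 2} \leq \Norm{v_h - \hatv}_{\leb 2} + \Norm{\hatv - v}_{\leb 2}$, where the second terms on each right-hand side are controlled by $\eta_R(t)^{1/2}$ (noting $\enorm{\hatu - u} = \norm{\pd x(\hatu - u)}_{\leb 2}$ because $\hatu - u$ is continuous) and the first terms by the reconstruction estimates. The principal obstacle is step two, namely pinning down the residual in a form whose $\leb 2(S^1\times(0,T))$ norm is genuinely $O(h^q)$; this requires carefully matching the choice of $\hatv$ to the particular dG fluxes $G^\pm$ in \eqref{eq:discrete-scheme} so that the mismatch terms involving $\tau_h$, $W'(u_h)$ and $\bih{u_h}{\cdot}$ cancel against the elliptic reconstruction residual, and it is precisely here that the reconstruction pairing of \cite{GiesselmannPryer:2014} is needed.
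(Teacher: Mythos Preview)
The paper does not actually prove this statement: Theorem~\ref{the:apriori} is quoted verbatim from the companion work \cite[Thm~6.1]{GiesselmannPryer:2014} and no proof is supplied here. So there is nothing in the present paper to compare your proposal against directly.

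That said, your outline is consistent with the philosophy the introduction advertises (``we are able to derive both a priori \dots\ and a posteriori error estimates based on similar techniques''), and it mirrors the a posteriori argument of \S\ref{sec:a posteriori} in reverse: reconstructions, residual computation, Theorem~\ref{the:rreb}, then triangle inequalities. One technical point to watch: Theorem~\ref{the:rreb} as stated demands that the perturbed pair satisfy $\pd t \hatu - \pd x \hatv = 0$ \emph{exactly}, with the residual $\mathfrak{R}$ appearing only in the second equation. Your description of $\hatv$ (``$G^-[v_h]$ equals $\pd x \hatv$ up to a controllable remainder'') would leave a residual in the first equation as well, since $\pd t \hatu \neq \pd t u_h = G^-[v_h]$ in general. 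In the a posteriori analysis here this is resolved by \emph{defining} $\cR[v_h]$ through $\pd{xx}\cR[v_h] = \pd{xt}\cR_1[u_h]$ (Definition~\ref{def:ell-recon-operators}), which forces the first equation to hold identically; you would need an analogous device, or else a mild extension of Theorem~\ref{the:rreb} that tolerates a residual in the first equation too.
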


\begin{remark}[Notation convention]
  To avoid the proliferation of constants, unless otherwise specified,
  we will henceforth use the convention that $a \lesssim b$ means $a
  \leq C b$ for a generic constant $C$ that may depend on the domain,
  triangulation or polynomial degree, but is \emph{independent} of the
  meshsize $h$, problem parameter $\mu$ and exact or
  discrete solutions $\qp{u,v}$, $\qp{u_h, v_h, \tau_h}$. We have also tried to clarify the dependency of the resultant estimator on $\gamma$,
  however, it is not feasible to make the constants completely independent of $\gamma$ in view of the $\gamma$
  dependency of $\overline M$ and $\overline W$ in Theorem \ref{the:rreb}. 
  Since the constants are not fully stated our final result will be an a posteriori \emph{indicator},
  however, an estimator can be achieved by explicitly tracking the
  constants, for clarity of exposition we will not do this here.
\end{remark}

To set up the a posteriori analysis we require access to two families of
reconstruction operators. 

\begin{definition}[Discrete reconstruction operators]
  \label{eq:discrete-reconstruction}
  We define $\D^\pm:\fes_p \to \fes_{p+1}$ to be the \emph{discrete
    reconstruction operator} satisfying for $\Psi\in\fes_p$
  \begin{equation}
    0 = \int_{S^1} \pd x \D^\pm[\Psi] \Phi - G^\pm[\Psi]\Phi \Foreach\Phi\in\fes_{p}
  \end{equation}
  and
  \[ \qp{ \D^\pm [\Psi]}^\pm = \Psi^\mp  \quad \text{ on } \E.\]
\end{definition}

\begin{remark}[Continuity of {$\D^\pm[\cdot]$}]
  \label{rem:continuity-and-appoximation-D}
  Note that $\D^\pm$ are constructed such that for any
  $\Psi\in\fes_p$ we have that $\D^\pm[\Psi]\in\fes_{p+1}\cap\cont{0}(S^1)$.
  In addition we have the following approximation properties, proofs
  of which can be found in \cite[c.f.]{MN06}
  \begin{gather}
    \Norm{\Psi - \D^\pm[\Psi]}_{\leb{2}(S^1)}^2 \lesssim  \Norm{\sqrt{h^\pm_\E}\jump{\Psi}}_{\leb{2}(\E)}^2
    \\
    \enorm{\Psi - \D^\pm[\Psi]}^2 \lesssim \Norm{\sqrt{h^{-1}_\E}\jump{\Psi}}_{\leb{2}(\E)}^2.
  \end{gather}
\end{remark}

\begin{remark}[Multidimensional extension]
  \label{rem:multid}
  The analysis which we present in this work is fully extendable to
  the multidimensional setting with the exception of the discrete
  reconstruction operators $\D^\pm$. The construction of appropriate
  generalisations of $\D^\pm$ is the topic of ongoing research, however, progress in this direction has been made in \cite{GeorgoulisHallMakridakis:2014}
  where the authors give an appropriate reconstruction for the case of two dimensional linear transport.
\end{remark}

\begin{remark}[Orthogonality]
  \label{rem:orthogonality-and-appoximation-D}
  Note that $\D^\pm$ are constructed such that for any
  $\Psi\in\fes_p$ and $\Phi \in \fes_{p-1}$ we have that 
\[ \int_{S^1} \qp{ D^\pm[\Psi] - \Psi} \Phi =0.\] 
  A  proof
can be found in \cite[c.f.]{GiesselmannMakridakisPryer:2014}
\end{remark}

\begin{definition}[Continuous projection operator]
  We define $P^C_{p} : \leb{2}(\T{}) \to \fes_{p}\cap \cont{0}(S^1)$ to
  be the $\leb{2}(S^1)$ orthogonal projection operator satisfying
  \begin{equation}
    \int_{S^1} P^C_p \qb{w} \Phi = \int_{S^1} w\Phi \Foreach \Phi\in\fes_{p}\cap \cont{0}(S^1).
  \end{equation}
  It is readily verifiable that $P^C_{p}$ is stable in $\leb{2}(S^1)$,
  that is, $\Norm{P^C_p \qb{w}}_{\leb{2}(S^1)}\leq \Norm{w}_{\leb{2}(S^1)}$
  and has optimal approximation properties
  \begin{equation}
    \Norm{P^C_p \qb{w} - w}_{\leb{2}(S^1)}
    \lesssim
    h^{p+1} \Norm{w}_{\sobh{q+1}(S^1)}.
  \end{equation}
\end{definition}

\begin{definition}[Continuous reconstruction operators]
  \label{def:ell-recon-operators}
  We define three continuous reconstruction operators,
  $\cR_1[u_h]\in\sobh{3}(S^1), \cR_2[u_h]\in\sobh{2}(S^1) \AND
  \cR[v_h]\in\sobh{2}(S^1)$ to be solutions of
  \begin{equation}
    \begin{split}
      0 &= \gamma \pd {xx} \cR_1[u_h] - P^C_{p+1} \qb{W'(u_h)} + \D^+[\tau_h]
      \\
      0 &= \gamma \pd {xx} \cR_2[u_h] - W'(u_h) + \tau_h
      \\
      0 &= \pd {xx} \cR[v_h] - \pd {xt} \cR_1[u_h],
    \end{split}
  \end{equation}
  respectively, such that each of the problems has matching mean value
  with the discrete solution, that is
  \begin{equation}
    0 = \int_{S^1} \cR_1[u_h] - u_h = \int_{S^1} \cR_2[u_h] - u_h = \int_{S^1} \cR[v_h] - v_h.
  \end{equation}
\end{definition}

\begin{lemma}[Reconstructed PDE system]
  \label{lem:reconstructed-pde-sys}
  The reconstructions given in Definition
  \ref{def:ell-recon-operators} satisfy the following perturbation of
  (\ref{eq:model-prob})
  \begin{equation}
    \begin{split}
      \pd t \cR_1[u_h] - \pd x \cR[v_h] &= 0
      \\
      \pd t \cR[v_h] - \pd x W'(\cR_1[u_h]) + \gamma \pd {xxx} \cR_1[u_h] - \mu \pd {xx} \cR[v_h]
      &= E,
    \end{split}
  \end{equation}
  where
  \begin{equation}
    E := \pd t {\qp{\cR[v_h] - v_h}} - \pd x {\qp{W'(\cR_1[u_h]) - P^C_{p+1} \qb{W'(u_h)}}}     
    -
    \mu \pd x {\qp{\pd t \cR_1[u_h] - \D^+[\pd t u_h]}}.
  \end{equation}
\end{lemma}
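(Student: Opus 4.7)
The plan is to split the proof into two parts, one for each line of the reconstructed system. Equation 1 follows from elementary considerations; equation 2 requires unwinding the three elliptic reconstructions and the discrete scheme.

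For the first equation, differentiating the defining relation $\pd{xx}\cR[v_h] = \pd{xt}\cR_1[u_h]$ shows that $\pd t \cR_1[u_h] - \pd x \cR[v_h]$ has vanishing $x$-derivative, hence is spatially constant. To identify this constant with zero, I integrate over $S^1$: periodicity kills the $\pd x \cR[v_h]$ contribution, and the mean-value matching built into Definition \ref{def:ell-recon-operators} gives $\int_{S^1}\pd t\cR_1[u_h] = \int_{S^1}\pd t u_h$. Testing the first line of \eqref{eq:discrete-scheme} with $\Phi = 1 \in \fes_p$ reduces this to $\int_{S^1} G^-[v_h]$, which vanishes by a standard telescoping computation using periodicity and the definition of $G^-$.

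For the second equation, I first substitute the defining elliptic equations. Differentiating the $\cR_1$ equation once in $x$ gives $\gamma \pd{xxx}\cR_1[u_h] = \pd x P^C_{p+1}[W'(u_h)] - \pd x \D^+[\tau_h]$, while $\mu \pd{xx}\cR[v_h] = \mu \pd{xt}\cR_1[u_h]$ by definition of $\cR$. Inserting these into the LHS and adding and subtracting $\pd t v_h$, $\pd x W'(\cR_1[u_h])$, and $\mu \pd x \D^+[\pd t u_h]$, the claimed identity reduces to proving the pointwise relation
\begin{equation*}
  \pd t v_h \;=\; \pd x \D^+[\tau_h] \,+\, \mu\, \pd x \D^+[\pd t u_h].
\end{equation*}
Both sides lie in $\fes_p$, so it suffices to verify the equality tested against arbitrary $\Psi \in \fes_p$. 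Using Definition \ref{eq:discrete-reconstruction}, $\int_{S^1}\pd x \D^+[\cdot]\Psi = \int_{S^1} G^+[\cdot]\Psi$ for any $\Psi \in \fes_p$. The first line of \eqref{eq:discrete-scheme} shows $\pd t u_h = G^-[v_h]$ pointwise (both sides are in $\fes_p$ and they agree in $L^2$ against $\fes_p$). Then Proposition \ref{pro:discrete-int-by-parts} yields $\int_{S^1} G^+[G^-[v_h]]\Psi = -\int_{S^1} G^-[v_h]\, G^-[\Psi]$. Feeding these two identities into the second line of \eqref{eq:discrete-scheme} recovers exactly the tested version of the displayed relation.

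The main obstacle is bookkeeping rather than conceptual: tracking which quantity sits in which finite element space when applying $\D^\pm$ and $G^\pm$, and recognising that the viscous contribution $\mu G^-[v_h] G^-[\Psi]$ in the discrete scheme is precisely what the lifted term $\mu \pd x \D^+[\pd t u_h]$ encodes once discrete integration by parts is applied. This is the cancellation that lets us package all non-consistency into the single residual $E$, with the contributions involving $\cR[v_h] - v_h$, $W'(\cR_1[u_h]) - P^C_{p+1}[W'(u_h)]$, and $\pd t\cR_1[u_h] - \D^+[\pd t u_h]$ reflecting, respectively, the three approximations introduced by the elliptic reconstruction, the continuous projection of $W'$, and the discrete reconstruction of the strain time derivative.
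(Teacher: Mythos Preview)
Your proof is correct and follows essentially the same route as the paper: differentiate the defining equation for $\cR_1[u_h]$, use the first discrete equation to identify $\pd t u_h = G^-[v_h]$, apply discrete integration by parts (Proposition~\ref{pro:discrete-int-by-parts}) to the viscous term, invoke the defining property of $\D^+$ to pass between $G^+[\cdot]$ and $\pd x \D^+[\cdot]$ against $\fes_p$ test functions, and conclude pointwise since everything lies in $\fes_p$. Your treatment of the first equation is in fact more explicit than the paper's, which simply says it ``is obtained using the definition of $\cR[v_h]$'' without spelling out the mean-value argument you give.
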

\begin{proof}
  Using the smoothness of the reconstruction $\cR_1[u_h]$ we see that
  \begin{equation}
    \label{eq:reconstructed-pde-1}
    0 = \gamma \pd {xxx} \cR_1[u_h] - \pd x P^C_{p+1} \qb{W'(u_h)} + \pd x \D^+[\tau_h].
  \end{equation}
  Using the semi-discrete scheme (\ref{eq:discrete-scheme})$_1$ we have
  that $G^-[v_h] = \pd t u_h$, substituting this into
  (\ref{eq:discrete-scheme})$_2$ we see
  \begin{equation}
    \begin{split}
      0 
      &=
      \int_{S^1} \pd t v_h \Psi - G^+[\tau_h] \Psi + \mu G^-[v_h] G^-[\Psi]
      \\
      &=
      \int_{S^1} \pd t v_h \Psi - G^+[\tau_h] \Psi + \mu \pd t u_h G^-[\Psi].
    \end{split}
  \end{equation}
  Making use of the discrete integration by parts in Proposition
  \ref{pro:discrete-int-by-parts} we have that
  \begin{equation}
    0 = 
    \int_{S^1} \pd t v_h \Psi - G^+[\tau_h] \Psi - \mu G^+[\pd t u_h] \Psi.
  \end{equation}
  Now in view of the discrete reconstruction operator given in
  Defintion \ref{eq:discrete-reconstruction} we see
  \begin{equation}
    \label{eq:reconstructed-pde-2}
    0 = 
    \int_{S^1} \pd t v_h \Psi - G^+[\tau_h] \Psi - \mu \pd x \D^+[\pd t u_h] \Psi.
  \end{equation}
  As $\pd t v_h(\cdot,t) , G^+[\tau_h](\cdot,t), \pd x D^+[\pd t
  u_h](\cdot,t) \in \rV_p$ we may write (\ref{eq:reconstructed-pde-2})
  pointwise as
  \begin{equation}
    \label{eq:reconstructed-pde-3}
    0 = 
    \pd t v_h - G^+[\tau_h] - \mu \pd x \D^+[\pd t u_h].
  \end{equation}
  Using (\ref{eq:reconstructed-pde-1}) and
  (\ref{eq:reconstructed-pde-3}) together with the definition of
  $\cR[v_h]$ we see that 
  \begin{equation}
    \begin{split}
      0
      &=
      \pd t \cR[v_h] 
      -
      \pd x W'(\cR_1[u_h])
      +
      \gamma \pd {xxx} \cR_1[u_h] 
      -
      \mu \pd {xx} \cR[v_h] 
      \\
      &
      \qquad 
      -
      \pd t {\qp{\cR[v_h] - v_h}} 
      +
      \pd x {\qp{W'(\cR_1[u_h]) - P^C_{p+1} \qb{W'(u_h)}}}
      +
      \mu \pd x {\qp{\pd t \cR_1[u_h] - \D^+[\pd t u_h]}},
    \end{split}
  \end{equation}
  showing the second equation of the Lemma, the first is obtained
  using the definition of $\cR[v_h]$, concluding the proof.
\end{proof}

\begin{remark}[Regularity bounds for the reconstructions]
  \label{rem:properties-of-ell-recon}
  Note that the problems which define the reconstruction operators in
  Defintion \ref{def:ell-recon-operators} are well posed in view of
  the elliptic problem's unique solvability, moreover, thanks to
  elliptic regularity, we have
  \begin{equation}
    \begin{split}
      \Norm{\cR_1[u_h]}_{\sobh{k+1}(S^1)} 
      &\lesssim 
      \frac{1}{\gamma} \Norm{P^C_{p+1} \qb{W'(u_h)} - \D^+[\tau_h]}_{\sobh{k-1}(S^1)} 
      \Foreach k \in \{0,1,2\}
      \\
      \Norm{\cR_2[u_h]}_{\sobh{k+1}(S^1)} 
      &\lesssim 
      \frac{1}{\gamma} \Norm{ W'(u_h) - \tau_h}_{\sobh{k-1}(S^1)} \Foreach k \in \{0,1\} \AND
      \\
      \Norm{\cR[v_h]}_{\sobh{k+1}(S^1)} 
      &\lesssim 
      \Norm{\pd {xt} \cR_1[u_h]}_{\sobh{k-1}(S^1)}  \Foreach k \in \{0,1\}.
    \end{split}
  \end{equation}
\end{remark}

\newtheorem{Hyp}[theorem]{Assumption}

\begin{Hyp}[A posteriori control on {$\bih{\cdot}{\cdot}$}]
  \label{ass:a posteriori-control}
  The reconstruction $\cR_2[u_h]$ is the \emph{elliptic reconstruction}
  of $u_h$ \cite[c.f.]{Makridakis:2003}. We will make the assumption that there exists an optimal
  order elliptic a posteriori estimate controlling $\enorm{u_h -
    \cR_2[u_h]}$, that is, there exists a functional $\Eta_1$ depending
  only upon $u_h$ and the problem data such that
  \begin{equation}
    \begin{split}
      \enorm{u_h - \cR_2[u_h]}
      &\lesssim 
      \Eta_1[u_h, \frac{1}{\gamma}\qp{\tau_h - W'(u_h)}] \sim \Oh(h^{p}).
    \end{split}
  \end{equation}
\end{Hyp}

\begin{example}[A posteriori control for the interior penalty discretisation]
  \label{ex:ip-apost}
  Taking $f := \tau_h - W'(u_h)$, if $\bih{\cdot}{\cdot}$ takes the
  form of an interior penalty discretisation
  \begin{equation}
    \bih{u_h}{\Zeta} 
    = 
    \int_{S^1} \pd x u_h \pd x \Zeta
    -
    \int_\E \jump{u_h}\avg{\pd x \Zeta} + \jump{\Zeta}\avg{\pd x u_h}
    -
    \frac{\sigma}{h_\E} \jump{u_h}\jump{\Zeta},
  \end{equation}
  where $\sigma$ is the penalty parameter and is chosen large enough to
  guarantee coercivity, we may use estimates of the form
  \begin{equation}
    \Eta_1[u_h, f]^2
    = 
    \sum_{K\in\T{}}
    h_K^2 \Norm{f - \pd {xx} u_h}_{\leb{2}(K)}^2
    +
    \sum_{e\in\E}\qb{h_e \Norm{\jump{\pd x u_h}}_{\leb{2}(e)}^2
    +
    \sigma^2 h_e^{-1} \Norm{\jump{u_h}}_{\leb{2}(e)}^2}.
  \end{equation}
  See for example \cite[Thm 3.1]{KarakashianPascal:2003}.
\end{example}

\section{A posteriori analysis}
\label{sec:a posteriori}

We begin this section by stating some technical Lemmata required for
the main result.

\begin{lemma}[Reduced relative entropy bound]
  \label{lem:rre-bound-discrete}
  Let $$\qp{u,v}\in\cont{1}(0,T;\sobh1(S^1))\cap\cont{0}(0,T;\sobh3(S^1))\times
  \cont{1}(0,T;\leb{2}(S^1))\cap\cont{0}(0,T;\sobh2(S^1))$$ solve the model problem
  (\ref{eq:model-prob}) and $\qp{u_h,v_h,\tau_h}\in
  \cont{1}([0,T),\rV_p)\times \cont{1}([0,T),\rV_p)\times
  \cont{0}([0,T),\rV_p)$ be the semidiscrete approximations generated
  by the scheme (\ref{eq:discrete-scheme}) then given the reduced
  relative entropy 
  \begin{equation}
    \eta_R(t) 
    :=
    \int_{S^1} 
    \frac{\gamma}{2} \qp{\pd x u - \pd x \cR_1[u_h]}^2
    +
    \frac{1}{2} \qp{v - \cR[v_h]}^2
    +
    \frac{\mu}{4} \int_0^t \norm{v - \cR[v_h]}_{\sobh{1}(S^1)},
  \end{equation}
  we have that
  \begin{equation}
    \ddt \eta_R(t) \leq K\big[\cR_1[u_h(\cdot,t)]\big] \eta_R(t) + \Norm{E(\cdot,t)}^2_{\leb{2}(S^1)},
  \end{equation}
  with $E = E_1 - E_2 - E_3$ and
  \begin{gather}
    E_1 := \pd t {\qp{\cR[v_h] - v_h}}
    \\
    E_2 := \pd x {\qp{W'(\cR_1[u_h]) - P^C_{p+1} \qb{W'(u_h)}}}
    \\
    E_3 := \mu \pd x {\qp{\pd t \cR_1[u_h] - \D^+[\pd t u_h]}}.
  \end{gather}
\end{lemma}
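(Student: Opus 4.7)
The plan is to observe that this lemma is essentially a bookkeeping version of Theorem \ref{the:rreb} applied to the reconstructed pair, stopped before the Gronwall step.

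First I would invoke Lemma \ref{lem:reconstructed-pde-sys} to identify $(\hatu, \hatv) = (\cR_1[u_h], \cR[v_h])$ as a strong solution of the perturbed problem \eqref{eq:perturbed-prob} with residual $\mathfrak{R} = E = E_1 - E_2 - E_3$. The regularity of $\cR_1[u_h]$ and $\cR[v_h]$ in Remark \ref{rem:properties-of-ell-recon} (together with the smoothness of the semidiscrete quantities $u_h, v_h, \tau_h$ in time guaranteed by \eqref{eq:discrete-scheme}) ensures that these reconstructions are sufficiently regular to serve as $(\hatu, \hatv)$ in Theorem \ref{the:rreb}. The uniform $\leb{\infty}$ bound on $u_h$ obtained from Proposition \ref{pro:energy-consistency} controls $\cR_1[u_h]$ in $\leb{\infty}$ via the elliptic reconstruction estimates and Sobolev embedding, so the constant $\overline M$ defined in \eqref{eq:overlineM} is finite.

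Next I would repeat the computation carried out inside the proof of Theorem \ref{the:rreb}: differentiate $\eta_R$ in time, substitute the two systems (the exact one \eqref{eq:model-prob} and the reconstructed perturbation given by Lemma \ref{lem:reconstructed-pde-sys}), integrate by parts to remove the dispersive and viscous terms, and apply Young's inequality followed by the Poincar\'e inequality exactly as in display \eqref{eq:rre-1-gamma} and the lines that follow. This yields the differential inequality
\begin{equation*}
  \dt \eta_R(t) \leq K[\cR_1[u_h](\cdot,t)]\, \eta_R(t) + \Norm{E(\cdot,t)}_{\leb{2}(S^1)}^2,
\end{equation*}
which is exactly the claimed bound. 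Note that we stop here and do \emph{not} apply the Gronwall inequality of Lemma \ref{lem:gronwall}; we keep the pointwise-in-time form because later stages of the a posteriori analysis need to absorb the individual residual contributions $E_1, E_2, E_3$ as computable estimator terms before Gronwall is applied globally.

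The only mildly delicate point is the matching-mean-value requirement: the Poincar\'e inequality used in Theorem \ref{the:rreb} requires $u - \cR_1[u_h]$ to have zero spatial mean. This is exactly what is built into Definition \ref{def:ell-recon-operators}, together with the fact that $u$ and $u_h$ share the same spatial average (which is preserved in time by both the continuous and discrete conservation equations). Thus the argument of Theorem \ref{the:rreb} transfers verbatim, and no new work beyond identification of the residual and verification of hypotheses is required.
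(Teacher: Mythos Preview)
Your proposal is correct and matches the paper's own proof, which simply says to take $\hatu = \cR_1[u_h]$, $\hatv = \cR[v_h]$ in Theorem \ref{the:rreb} and identify $\mathfrak{R} = E$ via Lemma \ref{lem:reconstructed-pde-sys}. Your additional remarks about regularity, the $\leb{\infty}$ bound, and the matching-mean-value condition for Poincar\'e are accurate elaborations of hypotheses that the paper leaves implicit.
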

\begin{proof}
  The proof consists of taking $\hatu = \cR_1[u_h]$ and $\hatv =
  \cR[v_h]$ in Theorem \ref{the:rreb}. Noting that from Lemma
  \ref{lem:reconstructed-pde-sys} the reconstructions satisfy the
  correct PDE with residual $\mathfrak{R} = E$.
\end{proof}

\begin{lemma}[Modified relative entropy bound]
  Let the conditions of Lemma \ref{lem:rre-bound-discrete} hold. Given
  the modified relative entropy
  \begin{equation}
    \eta_M(t) 
    :=
    \int_{S^1} 
    \frac{\gamma}{2} \qp{\pd x u - \pd x \cR_1[u_h]}^2
    +
    \frac{1}{2} \qp{u - \cR_1[u_h]}^2
    +
    \frac{1}{2} \qp{v - \cR[v_h]}^2
    +
    \frac{\mu}{4} \int_0^t \norm{v - \cR[v_h]}_{\sobh{1}(S^1)},
  \end{equation}
  we have that
  \begin{equation}
    \ddt \eta_M(t) \leq \widetilde K\big[\cR_1[u_h(\cdot,t)]\big] \eta_M(t) + \Norm{E(\cdot,t)}^2_{\leb{2}(S^1)},
  \end{equation}
  with $E$ given in Lemma \ref{lem:rre-bound-discrete}.
\end{lemma}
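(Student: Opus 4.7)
The plan is to mirror the strategy used for the preceding Lemma \ref{lem:rre-bound-discrete}, but to invoke Theorem \ref{the:alt-rre-bound} in place of Theorem \ref{the:rreb}. Concretely, I set $\hatu = \cR_1[u_h]$ and $\hatv = \cR[v_h]$. By Lemma \ref{lem:reconstructed-pde-sys} this pair satisfies the perturbed system \eqref{eq:perturbed-prob} with residual $\mathfrak{R} = E$. With this identification, the modified relative entropy displayed in the statement coincides exactly with the $\eta_M$ introduced in Theorem \ref{the:alt-rre-bound}.

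The conclusion sought is the differential inequality for $\eta_M$, not its Gronwall-integrated form. This is precisely the intermediate estimate obtained in the proof of Theorem \ref{the:alt-rre-bound} immediately before Lemma \ref{lem:gronwall} is invoked, namely
\begin{equation*}
  \dt \eta_M(t) \leq \widetilde{K}[\hatu](t)\, \eta_M(t) + \Norm{\mathfrak{R}(\cdot,t)}_{\leb{2}(S^1)}^2.
\end{equation*}
Substituting $\hatu = \cR_1[u_h]$ and $\mathfrak{R} = E$ yields the claimed bound. Thus the proof reduces essentially to a citation: one reads off the intermediate step of Theorem \ref{the:alt-rre-bound} and applies the PDE identification from Lemma \ref{lem:reconstructed-pde-sys}. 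Note that, unlike in the integrated form, no matching-initial-data hypothesis $\hatu(\cdot,0)=u(\cdot,0)$ is needed at this stage, because no application of Gronwall is made.

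The one nontrivial point requiring verification is the $\leb{\infty}$ bound \eqref{eq:overlineM} defining $\overline{M}$, on which the constant $\widetilde K$ depends through $\overline{\overline W}=\Norm{W}_{\cont{2}(-\overline M,\overline M)}$. The boundedness of $u$ follows from Lemma \ref{lem:energy} together with the embedding $\sobh{1}(S^1)\subset\leb{\infty}(S^1)$, exactly as observed in the remark following Lemma \ref{lem:energy}. The boundedness of $u_h$ follows analogously from the discrete energy identity in Proposition \ref{pro:energy-consistency}, using the coercivity of $\bih{\cdot}{\cdot}$. Finally, the elliptic regularity estimate in Remark \ref{rem:properties-of-ell-recon} combined with Sobolev embedding transfers this control to $\cR_1[u_h]$, which is the only genuine obstacle in the argument; once it is in place the entire conclusion is immediate from Theorem \ref{the:alt-rre-bound}.
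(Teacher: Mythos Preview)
Your proposal is correct and follows essentially the same approach as the paper, which simply states that the proof is analogous to that of Lemma~\ref{lem:rre-bound-discrete} with Theorem~\ref{the:alt-rre-bound} replacing Theorem~\ref{the:rreb}. Your additional care in isolating the pre-Gronwall differential inequality and in justifying the $\leb{\infty}$ bound for $\cR_1[u_h]$ is more explicit than the paper's one-line proof, but the underlying strategy is identical.
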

\begin{proof}
The proof is analogous to that of Lemma  \ref{lem:rre-bound-discrete} using Theorem 
\ref{the:alt-rre-bound} instead of Theorem \ref{the:rreb}.
\end{proof}

\begin{lemma}[Bound on the reconstruction of $u_h$]
  \label{lem-bd-on-Ruh}
  Let $u_h,v_h,\tau_h$ be given by \eqref{eq:discrete-scheme} and $\cR_1[u_h]$ be the reconstruction given in
  Definition \ref{def:ell-recon-operators}, then 
  \begin{multline}
      \Norm{\cR_1[u_h] - u_h}_{\leb{2}(S^1)} 
      \lesssim
      \enorm{\cR_1[u_h] - u_h}
      \lesssim
      \Eta_1[u_h, \frac{1}{\gamma}\qp{\tau_h - W'(u_h)}]
      \\
     + \frac{1}{\gamma}\Norm{P^C_{p+1} \qb{W'(u_h)} - W'(u_h)}_{\sobh{-1}(S^1)}
     +
      \frac{1}{\gamma} \Norm{\D^+[\tau_h] - \tau_h}_{\sobh{-1}(S^1)}.
    \end{multline}
\end{lemma}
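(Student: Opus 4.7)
The plan is to introduce $\cR_2[u_h]$ as an intermediate quantity and apply the triangle inequality
\begin{equation}
\enorm{\cR_1[u_h] - u_h} \leq \enorm{\cR_1[u_h] - \cR_2[u_h]} + \enorm{\cR_2[u_h] - u_h}.
\end{equation}
By Definition \ref{def:ell-recon-operators}, both $\cR_1[u_h]$ and $\cR_2[u_h]$ share the mean value of $u_h$, so every difference appearing below has zero mean value; a broken Poincar\'e--Friedrichs inequality then immediately yields $\Norm{\cR_1[u_h] - u_h}_{\leb{2}(S^1)} \lesssim \enorm{\cR_1[u_h] - u_h}$, which is the first inequality in the statement. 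The term $\enorm{\cR_2[u_h] - u_h}$ is controlled directly by $\Eta_1[u_h, \tfrac{1}{\gamma}\qp{\tau_h - W'(u_h)}]$ via Assumption \ref{ass:a posteriori-control}, since $\cR_2[u_h]$ is by construction the elliptic reconstruction of $u_h$ with data $\gamma^{-1}(\tau_h - W'(u_h))$.

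The substantive step is the bound on $\enorm{\cR_1[u_h] - \cR_2[u_h]}$. Subtracting the two defining equations of Definition \ref{def:ell-recon-operators} produces
\begin{equation}
-\gamma \pd{xx}\qp{\cR_1[u_h] - \cR_2[u_h]} = \qp{W'(u_h) - P^C_{p+1}\qb{W'(u_h)}} + \qp{\D^+[\tau_h] - \tau_h},
\end{equation}
where both sides have zero mean thanks to the $\leb{2}$--orthogonality of $P^C_{p+1}$ against constants and Remark \ref{rem:orthogonality-and-appoximation-D}. Because $\cR_1[u_h] - \cR_2[u_h]$ lies in $\hkm{2}(S^1)$ it is an admissible test function for the $\sobh{-1}$--$\sobh{1}$ duality pairing; testing, integrating by parts, and absorbing the resulting $\Norm{\cR_1[u_h] - \cR_2[u_h]}_{\sobh{1}(S^1)}$ via Poincar\'e yields
\begin{equation}
\Norm{\pd x\qp{\cR_1[u_h] - \cR_2[u_h]}}_{\leb{2}(S^1)} \lesssim \tfrac{1}{\gamma}\qp{\Norm{P^C_{p+1}\qb{W'(u_h)} - W'(u_h)}_{\sobh{-1}(S^1)} + \Norm{\D^+[\tau_h] - \tau_h}_{\sobh{-1}(S^1)}}.
\end{equation}
Since $\cR_1[u_h]$ and $\cR_2[u_h]$ are both continuous, their difference has no jumps across $\E$ and $\enorm{\cR_1[u_h] - \cR_2[u_h]}$ reduces to exactly the left-hand side above.

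The only real subtlety is the bookkeeping of the mean-value constraints underpinning both Poincar\'e applications---one moving from $\leb{2}$ to the dG seminorm on the full difference, the other moving from $\sobh{1}$ to the $\sobh{1}$ seminorm on the smooth difference. Both are guaranteed by the matching mean-value conventions in Definition \ref{def:ell-recon-operators}, so I expect no real obstacle; once these are in place the argument reduces to a triangle inequality combined with standard $\sobh{-1}$--$\sobh{1}$ duality for the elliptic difference equation.
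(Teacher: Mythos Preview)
Your proposal is correct and follows essentially the same route as the paper: split via $\cR_2[u_h]$, invoke Assumption \ref{ass:a posteriori-control} for the elliptic-reconstruction piece, and use the elliptic regularity of the difference $\cR_1[u_h]-\cR_2[u_h]$ (which the paper cites directly from Remark \ref{rem:properties-of-ell-recon}, whereas you spell out the underlying $\sobh{-1}$--$\sobh{1}$ duality argument). Your additional attention to the mean-value constraints and the first Poincar\'e inequality is welcome but does not change the structure of the proof.
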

\begin{proof}
  Using the triangle inequality we have that
  \begin{equation}
    \label{eq:bd-on-Ruh-1}
    \begin{split}
      \enorm{\cR_1[u_h] - u_h}
      &\lesssim
      \norm{{\cR_1[u_h] - \cR_2[u_h]}}_1
      +
      \enorm{{\cR_2[u_h] - u_h}}.
    \end{split}
  \end{equation}
  Using  the elliptic regularity of the problem
  we have that
  \begin{equation}
    \label{eq:bd-on-Ruh-2}
    \begin{split}
      \norm{{\cR_1[u_h] - \cR_2[u_h]}}_1
      &\lesssim
      \frac{1}{\gamma}\Norm{{{W'(u_h)-P^C_{p+1} \qb{W'(u_h)} + \tau_h - \D^+[\tau_h]}}}_{\sobh{-1}(S^1)}
      \\
      &\lesssim
      \frac{1}{\gamma}\Norm{{{W'(u_h)-P^C_{p+1} \qb{W'(u_h)}}}}_{\sobh{-1}(S^1)} 
      + \frac{1}{\gamma}\Norm{{{\tau_h - \D^+[\tau_h]}}}_{\sobh{-1}(S^1)}.
    \end{split}
  \end{equation}
  The result then follows  from Assumption
  \ref{ass:a posteriori-control} since
  \begin{equation}
    \label{eq:bd-on-Ruh-3}
      \enorm{\cR_2[u_h] - u_h} 
      \lesssim
      \Eta_1[u_h, \frac{1}{\gamma}\qp{\tau_h - W'(u_h)}].
  \end{equation}
  Substituting (\ref{eq:bd-on-Ruh-2}) and (\ref{eq:bd-on-Ruh-3}) into
  (\ref{eq:bd-on-Ruh-1}) concludes the proof.
\end{proof}

\begin{lemma}[Bounds on the reconstruction of $v_h$]
  \label{lem-bd-on-Rvh}
  Let $u_h,v_h,\tau_h$ be given by \eqref{eq:discrete-scheme} and $\cR[v_h]$ be the reconstruction given in
  Definition \ref{def:ell-recon-operators}, then 
  \begin{equation}\label{e1-bd-on-Rvh}
    \begin{split}
      \Norm{\cR[v_h] - v_h}_{\leb{2}(S^1)} 
      &\lesssim
      \Eta_1[\pd t u_h, \frac{1}{\gamma}\qp{\pd t \tau_h - \pd t W'(u_h)}]
      \\
      &\qquad +
       \frac{1}{\gamma}\Norm{P^C_{p+1} \qb{\pd t W'(u_h)} - \pd t W'(u_h)}_{\sobh{-1}(S^1)}
      \\
      &
      \qquad +
      \frac{1}{\gamma}\Norm{\D^+[\pd t \tau_h] - \pd t \tau_h}_{\sobh{-1}(S^1)}
      +
      \Norm{\sqrt{h_\E^-}\jump{v_h}}_{\leb{2}(\E)}
    \end{split}
  \end{equation}
  and
    \begin{equation}\label{e2-bd-on-Rvh}
    \begin{split}
  \norm{\cR[v_h] - v_h}_{\operatorname{dG}} 
      &\lesssim
      \Eta_1[\pd t u_h, \frac{1}{\gamma}\qp{\pd t \tau_h - \pd t W'(u_h)}]
      \\
      &\qquad +
       \frac{1}{\gamma}\Norm{P^C_{p+1} \qb{\pd t W'(u_h)} - \pd t W'(u_h)}_{\sobh{-1}(S^1)}
      \\
      &
      \qquad +
      \frac{1}{\gamma}\Norm{\D^+[\pd t \tau_h] - \pd t \tau_h}_{\sobh{-1}(S^1)}
      +
      \Norm{\sqrt{h_\E^{-1}}\jump{v_h}}_{\leb{2}(\E)}
    \end{split}
  \end{equation}
\end{lemma}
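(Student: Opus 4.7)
The plan is to split the difference $\cR[v_h] - v_h$ by inserting the discrete reconstruction $\D^-[v_h]$ from Definition \ref{eq:discrete-reconstruction}, which is particularly convenient here because the semidiscrete scheme (\ref{eq:discrete-scheme})$_1$ gives $G^-[v_h] = \pd t u_h$, hence $\pd x \D^-[v_h] = \pd t u_h$. On the other hand, $\cR[v_h]$ was introduced in Definition \ref{def:ell-recon-operators} precisely so that $\pd x \cR[v_h] = \pd t \cR_1[u_h]$ via the reconstructed PDE derived in Lemma \ref{lem:reconstructed-pde-sys}. Subtracting these identities yields the crucial observation
\begin{equation*}
\pd x \qp{\cR[v_h] - \D^-[v_h]} = \pd t \cR_1[u_h] - \pd t u_h,
\end{equation*}
which reduces the reconstruction error in $v_h$ to the time derivative of the $u_h$-error already controlled (in a version of) Lemma \ref{lem-bd-on-Ruh}.

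Using the triangle inequality, I would next write $\Norm{\cR[v_h] - v_h}_{\leb{2}(S^1)} \leq \Norm{\cR[v_h] - \D^-[v_h]}_{\leb{2}(S^1)} + \Norm{\D^-[v_h] - v_h}_{\leb{2}(S^1)}$ and bound the second term directly using the $\leb{2}$ approximation property of $\D^-$ recorded in Remark \ref{rem:continuity-and-appoximation-D}, producing the jump term $\Norm{\sqrt{h_\E^-}\jump{v_h}}_{\leb{2}(\E)}$. For the first term, since both $\cR[v_h]$ and $\D^-[v_h]$ are continuous on $S^1$, I would apply the Poincar\'e inequality to the mean-zero part of $\cR[v_h] - \D^-[v_h]$, leaving a residual constant equal to $\overline{v_h - \D^-[v_h]}$ (because $\cR[v_h]$ has matching mean with $v_h$ by Definition \ref{def:ell-recon-operators}), which is itself controlled by $\Norm{\sqrt{h_\E^-}\jump{v_h}}_{\leb{2}(\E)}$. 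The dG bound \eqref{e2-bd-on-Rvh} follows similarly: continuity of both $\cR[v_h]$ and $\D^-[v_h]$ on $S^1$ makes all interior jumps of their difference vanish, so $\enorm{\cR[v_h] - \D^-[v_h]}$ collapses to $\Norm{\pd x (\cR[v_h] - \D^-[v_h])}_{\leb{2}(S^1)}$, while $\enorm{\D^-[v_h] - v_h}$ contributes the $h_\E^{-1}$-weighted jump term from Remark \ref{rem:continuity-and-appoximation-D}.

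The remaining task is to bound $\Norm{\pd t \cR_1[u_h] - \pd t u_h}_{\leb{2}(S^1)}$ by the desired estimator. Differentiating the elliptic problems defining $\cR_1[u_h]$ and $\cR_2[u_h]$ in time, which commutes with the linear operators $P^C_{p+1}$ and $\D^+$, shows that $\pd t \cR_1[u_h]$ and $\pd t \cR_2[u_h]$ solve the same type of reconstruction equations as $\cR_1[u_h]$ and $\cR_2[u_h]$, but with $u_h$ replaced by $\pd t u_h$ and with $\tau_h, W'(u_h)$ replaced by their time derivatives. Repeating the argument of Lemma \ref{lem-bd-on-Ruh} verbatim (using elliptic regularity on $\pd t(\cR_1[u_h] - \cR_2[u_h])$ and invoking Assumption \ref{ass:a posteriori-control} with data $\pd t u_h$) then delivers exactly the three $\pd t$-flavoured terms appearing on the right hand side of \eqref{e1-bd-on-Rvh}.

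The main obstacle is verifying that the elliptic a posteriori estimator of Assumption \ref{ass:a posteriori-control} is legitimately applied with $\pd t u_h$ in place of $u_h$: this requires that $\cR_2$ commutes with $\pd t$ (which is immediate from linearity of the continuous elliptic problem and uniqueness of solutions with prescribed mean) and that $u_h\in\cont{1}(0,T;\fes_p)$ so that $\pd t u_h$ lies in the correct discrete space. Both requirements are already built into the scheme \eqref{eq:discrete-scheme} and the reconstruction framework, so no new hypothesis is needed.
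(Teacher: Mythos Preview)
Your proposal is correct and follows essentially the same route as the paper: split via $\D^-[v_h]$, use the identity $\pd x(\cR[v_h]-\D^-[v_h])=\pd t(\cR_1[u_h]-u_h)$ together with Poincar\'e and the approximation properties of $\D^-$, then apply the time-differentiated version of Lemma~\ref{lem-bd-on-Ruh}. One minor simplification: by Remark~\ref{rem:orthogonality-and-appoximation-D} the mean of $\D^-[v_h]-v_h$ vanishes (constants lie in $\fes_{p-1}$ for $p\ge 1$), so the residual mean you track is in fact exactly zero and no separate bound is needed.
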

\begin{proof}
We firstly prove \eqref{e1-bd-on-Rvh}.
  Using the triangle inequality we have that
  \begin{equation}
    \label{eq:bd-on-Rvh-1}
    \begin{split}
      \Norm{{{\cR[v_h] - v_h}}}_{\leb{2}(S^1)}
      &\lesssim
      \Norm{{{\cR[v_h] - \D^-[v_h]}}}_{\leb{2}(S^1)}
      +
      \Norm{{{\D^-[v_h] - v_h}}}_{\leb{2}(S^1)}.
    \end{split}
  \end{equation}
  Using a Poincar\'e inequality and the approximation properties of
  $\D^-$ given in Remark \ref{rem:continuity-and-appoximation-D}, we see
  \begin{equation}
    \label{eq:bd-on-Rvh-2}
    \begin{split}
      \Norm{{{\cR[v_h] - v_h}}}_{\leb{2}(S^1)}
      &\lesssim
      \Norm{\pd {x} {\qp{\cR[v_h] - \D^-[v_h]}} }_{\leb{2}(S^1)}
      +
    \Norm{\sqrt{h_\E^-}\jump{v_h}}_{\leb{2}(\E)}
      \\
      &\lesssim
      \Norm{\pd t {\qp{\cR_1[u_h] - u_h}}}_{\leb{2}(S^1)}
      +
    \Norm{\sqrt{h_\E^-}\jump{v_h}}_{\leb{2}(\E)}.
    \end{split}
  \end{equation}
  Invoking the result of Lemma \ref{lem-bd-on-Ruh} gives
  \begin{equation}
    \label{eq:bd-on-Rvh-3}
    \begin{split}
      \Norm{\pd t {\qp{\cR_1[u_h] - u_h}}}_{\leb{2}(S^1)} 
      &\lesssim
      \Eta_1[\pd t u_h, \frac{1}{\gamma}\qp{\pd t {\qp{\tau_h - W'(u_h)}}}]
      \\
      &
      \qquad +
      \frac{1}{\gamma}\Norm{\pd t {\qp{P^C_{p+1} \qb{W'(u_h)} - W'(u_h)}}}_{\sobh{-1}(S^1)}
      \\
      &\qquad +
      \frac{1}{\gamma}\Norm{\pd t {\qp{\D^+[\tau_h] - \tau_h}}}_{\sobh{-1}(S^1)}.
    \end{split}
  \end{equation}
  Substituting (\ref{eq:bd-on-Rvh-3}) into (\ref{eq:bd-on-Rvh-2})
  gives \eqref{e1-bd-on-Rvh}.
  Equation \eqref{e2-bd-on-Rvh} follows from
  \begin{multline}
    \label{eq:bd-on-Rvh-4}
      \norm{{{\cR[v_h] - v_h}}}_{\operatorname{dG}}
      \leq
      \norm{{{\cR[v_h] - \D^-[v_h]}}}_{\sobh{1}(S^1)}
      +
      \norm{{{\D^-[v_h] - v_h}}}_{\operatorname{dG}}\\
      \lesssim
      \Norm{\pd t {\qp{\cR_1[u_h] - u_h}}}_{\leb{2}(S^1)}
      + 
      \Norm{\sqrt{h_\E^{-1}}\jump{v_h}}_{\leb{2}(\E)}
    \end{multline}
  and \eqref{eq:bd-on-Rvh-3}
\end{proof}

\begin{lemma}[Upper bound on {$E_1$}]
  \label{lem:control-of-E1}
  Let the conditions of Lemma \ref{lem:rre-bound-discrete} hold, then
  \begin{equation}
    \begin{split}
      \Norm{E_1}^2_{\leb{2}(S^1)}
      &\lesssim
      \Eta_1[\pd {tt} u_h, \frac{1}{\gamma} \qp{\pd {tt} \tau_h - \pd {tt} W'(u_h)}]^2
      \\
      &\qquad +
      h
      \qp{
        \frac{1}{\gamma^2}\Norm{\jump{\pd {tt} \tau_h}}^2_{\leb{2}(\E)}
        +
        \Norm{\jump{\pd {t} v_h}}^2_{\leb{2}(\E)}
        +
        \frac{1}{\gamma^2}
        \Norm{\jump{\pd {tt} W'(u_h)}}^2_{\leb{2}(\E)}
      }
      \\
      &\qquad\qquad +
      \frac{1}{\gamma^2}\sum_{K\in\T{}} h^{2p+2}_K \Norm{\pd {tt} W'(u_h)}^2_{\sobh{p+1}(K)}.
    \end{split}
  \end{equation}
\end{lemma}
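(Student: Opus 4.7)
The natural strategy is to split $E_1=\pd t(\cR[v_h]-v_h)$ via the triangle inequality, using the discrete reconstruction $\D^-[\pd t v_h]$ as an intermediate, so that
\begin{equation*}
\Norm{E_1}_{\leb{2}(S^1)} \leq \Norm{\pd t \cR[v_h] - \D^-[\pd t v_h]}_{\leb{2}(S^1)} + \Norm{\D^-[\pd t v_h] - \pd t v_h}_{\leb{2}(S^1)}.
\end{equation*}
The second summand is handled directly by the approximation property of $\D^-$ recorded in Remark \ref{rem:continuity-and-appoximation-D}, producing $\Norm{\sqrt{h_\E^-}\jump{\pd t v_h}}_{\leb{2}(\E)}^2\lesssim h\Norm{\jump{\pd t v_h}}_{\leb{2}(\E)}^2$ after squaring, and hence the $h\Norm{\jump{\pd t v_h}}^2_{\leb{2}(\E)}$ contribution in the statement.

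For the first summand I would observe that $\pd t \cR[v_h]-\D^-[\pd t v_h]$ has zero mean on $S^1$---both operators preserve the mean of $v_h$ (respectively $\pd t v_h$) thanks to Definition \ref{def:ell-recon-operators} and Remark \ref{rem:orthogonality-and-appoximation-D}---so a Poincar\'e inequality applies. Its spatial derivative simplifies substantially: integrating the defining relation $\pd{xx}\cR[v_h]=\pd{xt}\cR_1[u_h]$ and using periodicity together with $\int_{S^1}\pd t\cR_1[u_h]=\int_{S^1}G^-[v_h]=0$ yields $\pd x\cR[v_h]=\pd t\cR_1[u_h]$, and therefore $\pd{xt}\cR[v_h]=\pd{tt}\cR_1[u_h]$. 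On the other hand the defining relation of $\D^-$ in Definition \ref{eq:discrete-reconstruction} forces $\pd x \D^-[\pd t v_h]=G^-[\pd t v_h]=\pd{tt} u_h$, the last identity coming from \eqref{eq:discrete-scheme}$_1$. Combining these, Poincar\'e gives $\Norm{\pd t\cR[v_h]-\D^-[\pd t v_h]}_{\leb{2}(S^1)}\lesssim\Norm{\pd{tt}\cR_1[u_h]-\pd{tt} u_h}_{\leb{2}(S^1)}$.

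The right-hand side is controlled by applying Lemma \ref{lem-bd-on-Ruh} with $u_h,\tau_h,W'(u_h)$ replaced by $\pd{tt} u_h,\pd{tt}\tau_h,\pd{tt} W'(u_h)$---this is legitimate because $\pd{tt}$ commutes with the linear operators $P^C_{p+1}$ and $\D^+$ that define $\cR_1$, and because Assumption \ref{ass:a posteriori-control} transfers verbatim to the time-differentiated elliptic reconstruction. This produces the leading $\Eta_1[\pd{tt} u_h,\frac{1}{\gamma}(\pd{tt}\tau_h-\pd{tt} W'(u_h))]$ term together with the two $\sobh{-1}(S^1)$ norms that appear in Lemma \ref{lem-bd-on-Ruh}. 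I would bound both $\sobh{-1}$ norms crudely by their $\leb{2}$ counterparts; Remark \ref{rem:continuity-and-appoximation-D} then yields the $\frac{1}{\gamma^2}h\Norm{\jump{\pd{tt}\tau_h}}_{\leb{2}(\E)}^2$ contribution from the $\D^+$ error, while for the continuous $\leb{2}$-projection $P^C_{p+1}$ applied to the piecewise smooth function $\pd{tt} W'(u_h)$ I would use a standard broken-interpolation estimate splitting into an inter-element jump term $h\Norm{\jump{\pd{tt} W'(u_h)}}^2_{\leb{2}(\E)}$ and an element-interior oscillation $\sum_K h_K^{2p+2}\Norm{\pd{tt} W'(u_h)}^2_{\sobh{p+1}(K)}$. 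Squaring and collecting all terms gives the stated bound.

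The principal technicalities I anticipate are the mean-value bookkeeping required to identify $\pd x\cR[v_h]$ with $\pd t\cR_1[u_h]$ without stray constants of integration, and the projection estimate for $P^C_{p+1}$ acting on a piecewise---but not globally---smooth argument, which demands combining the usual smooth-function approximation bound with a jump correction across element interfaces.
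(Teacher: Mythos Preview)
Your proposal is correct and follows essentially the same route as the paper: the same triangle-inequality splitting through $\D^-[\pd t v_h]=\pd t\D^-[v_h]$, the same Poincar\'e step reducing to $\Norm{\pd{tt}(\cR_1[u_h]-u_h)}_{\leb{2}(S^1)}$, and the same invocation of Lemma~\ref{lem-bd-on-Ruh} with $\pd{tt}$ throughout. The one place where the paper is more explicit is precisely the technicality you flagged at the end: to bound $\Norm{\pd{tt}(W'(u_h)-P^C_{p+1}[W'(u_h)])}_{\sobh{-1}(S^1)}$ it introduces an auxiliary projection $S_p:\sobh{1}(\T{})\to\fes_p$ matching one-sided traces at nodes and orthogonal to $\fes_{p-2}$, then splits through $S_p[W'(u_h)]$ to separate the interior oscillation $\sum_K h_K^{2p+2}\Norm{\pd{tt}W'(u_h)}^2_{\sobh{p+1}(K)}$ from the jump contribution $h\Norm{\jump{\pd{tt}W'(u_h)}}^2_{\leb{2}(\E)}$, exactly as you anticipated.
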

\begin{proof}
  Using the triangle inequality we have that
  \begin{equation}
    \begin{split}
      \Norm{E_1}^2_{\leb{2}(S^1)}
      &=
      \Norm{\pd t {\qp{\cR[v_h] - v_h}}}^2_{\leb{2}(S^1)}
      \\
      &\lesssim
      \Norm{\pd t {\qp{\cR[v_h] - \D^-[v_h]}}}^2_{\leb{2}(S^1)}
      +
      \Norm{\pd t {\qp{\D^-[v_h] - v_h}}}^2_{\leb{2}(S^1)}.
    \end{split}
  \end{equation}
  Using a Poincar\'e inequality and the approximation properties of
  $\D^-$ given in Remark \ref{rem:continuity-and-appoximation-D}, we see
  \begin{equation}
    \begin{split}
      \Norm{E_1}^2_{\leb{2}(S^1)}
      &\lesssim
      \Norm{\pd {tx} {\qp{\cR[v_h] - \D^-[v_h]}}}^2_{\leb{2}(S^1)}
      +
       h \Norm{\jump{\pd t v_h}}^2_{\leb{2}(\E)}
      \\
      &\lesssim
      \Norm{\pd {tt} {\qp{\cR_1[u_h] - u_h}}}^2_{\leb{2}(S^1)}
      +
      h\Norm{\jump{\pd t v_h}}^2_{\leb{2}(\E)}.
    \end{split}
  \end{equation}
  In view of Lemma \ref{lem-bd-on-Ruh} we have that
  \begin{equation}
    \label{eq:bound-for-E1}
    \begin{split}
      \Norm{E_1}^2_{\leb{2}(S^1)}
      &\lesssim
      \frac{1}{\gamma^2}
      \Norm{\pd {tt} {\qp{W'(u_h)-P^C_{p+1} \qb{W'(u_h)}}}}^2_{\sobh{-1}(S^1)}
      +
      h\Norm{\jump{\pd t v_h}}^2_{\leb{2}(\E)}
      \\
      &\qquad +
      \Eta_1[\pd {tt} u_h, \frac{1}{\gamma}\qp{\pd {tt} \tau_h - \pd {tt} W'(u_h)}]^2
      +
      \frac{h}{\gamma^2}
      \Norm{\jump{\pd {tt} \tau_h}}^2_{\leb{2}(\E)}.
    \end{split}
  \end{equation}
  Notice the bound is already a posteriori computable, however to avoid
  the computation of \\$P^C_{p+1} \qb{W'(u_h)}$ we give a bound for this
  term. To that end let $S_p : {\sobh{1}(\T{})}\to\fes_p$ be a
  projection operator defined such that
  \begin{equation}
    \begin{split}
      \int_{S^1} S_p \qb{w} \Phi &= \int_{S^1} w \Phi \Foreach \Phi\in\fes_{p-2}
      \\
      S_p \qb{w}(x_n^\pm) &= w(x_n^\pm).
    \end{split}
  \end{equation}
  Note that $S_p$ exactly projects piecewise polynomials of degree $p$, hence we
  have the approximation result
  \begin{equation}
    \label{eq:approx-of-S}
    \Norm{w - S_p\qb{w}}_{\leb{2}(S^1)} ^2
    \lesssim
    \sum_{K\in\T{}} h_K^{2p+2} \norm{w}_{\sobh{p+1}(K)}^2
  \end{equation}
  and
  \begin{equation}
    \enorm{w - S_p \qb{w}}^2
    \lesssim
    \sum_{K\in\T{}} h_K^{2p} \norm{w}_{\sobh{p+1}(K)}^2.
  \end{equation}
  Now
  \begin{equation}
    \begin{split}
      \Norm{\pd {tt} {\qp{W'(u_h)-P^C_{p+1} \qb{W'(u_h)}}}}^2_{\sobh{-1}(S^1)}
      &\leq
      \Norm{\pd {tt} {\qp{W'(u_h)-S_p \qb{W'(u_h)}}}}^2_{\leb{2}(S^1)}
      \\
      &\quad +
      \Norm{\pd {tt} {\qp{S_p \qb{W'(u_h)} - P^C_{p+1} \big[S_p \qb{W'(u_h)}\big]}}}^2_{\leb{2}(S^1)}
      \\
      &\quad +
      \Norm{\pd {tt} {\qp{ P^C_{p+1} \qb{S_p \qb{W'(u_h)}} - P^C_{p+1} \qb{W'(u_h)}}}}^2_{\leb{2}(S^1)}
      \\
      &\quad 
      =: 
      E^{1,1} + E^{1,2} + E^{1,3}.
    \end{split}
  \end{equation}
  In view of the stability of the $\leb{2}$ projection we have that 
  \begin{equation}
    E^{1,3} \leq E^{1,1}.
  \end{equation}
  From the approximation properties of $S_p$ given in
  (\ref{eq:approx-of-S}) we have 
  \begin{equation}
    E^{1,1} 
    \lesssim  
    \sum_K h_K^{2p+2} \Norm{\pd {tt} W'(u_h)}^2_{\sobh{p+1}(K)}.
  \end{equation}
  Moreover,
  \begin{equation}
    \begin{split}
      E^{1,2}
      &\lesssim
      h\Norm{\jump{\pd {tt} W'(u_h)}}^2_{\leb{2}(\E)}
    \end{split}
  \end{equation}
  hence
  \begin{equation}
    \label{eq:final-bound-for-Eij}
    \begin{split}
      \Norm{\pd {tt} {\qp{W'(u_h)-P^C_{p+1} \qb{W'(u_h)}}}}^2_{\sobh{-1}(S^1)}
      &\lesssim
      h\Norm{\jump{\pd {tt} W'(u_h)}}^2_{\leb{2}(\E)}
      +
      \sum_K h_K^{2p+2} \Norm{\pd {tt} W'(u_h)}^2_{\sobh{p+1}(K)}.
    \end{split}
  \end{equation}
  Combining (\ref{eq:bound-for-E1}) with
  (\ref{eq:final-bound-for-Eij}) yields the desired result.
\end{proof}

\begin{lemma}[Upper bound on {$E_2$}]
  \label{lem:control-of-E2}
  Let the conditions of Lemma \ref{lem:rre-bound-discrete} hold, then
  \begin{equation}
    \begin{split}
      \Norm{E_2}^2_{\leb{2}(S^1)}
      &\lesssim
      \Eta_1[u_h, \frac{1}{\gamma} \qp{\tau_h - W'(u_h)}]^2
      +
      \frac{h}{\gamma^2}
      \qp{\Norm{\jump{\tau_h}}^2_{\leb{2}(\E)}
        +
        \Norm{\jump{u_h}}^2_{\leb{2}(\E)}
      }
      \\
      &\qquad +
      \Norm{\sqrt{h_\E^{-1}}\jump{u_h}}^2_{\leb{2}(\E)}
      \\
      &\qquad 
      +
      \sum_{K\in\T{}} h_K^{2p} \Norm{W'(u_h)}^2_{\sobh{p+1}(K)} 
      +
      \frac{1}{\gamma^2} \sum_{K\in\T{}} h_K^{2p+2} \Norm{W'(u_h)}^2_{\sobh{p+1}(K)}.
    \end{split}
  \end{equation}      
\end{lemma}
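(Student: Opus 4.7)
The plan is to follow the strategy of Lemma \ref{lem:control-of-E1}. Since $\cR_1[u_h] \in \sobh{3}(S^1)$ by Remark \ref{rem:properties-of-ell-recon} and $P^C_{p+1}\qb{W'(u_h)}\in \fes_{p+1}\cap\cont{0}(S^1)$, the function $A := W'(\cR_1[u_h])-P^C_{p+1}\qb{W'(u_h)}$ lies in $\sobh{1}(S^1)$, so $E_2 = \pd x A$ may be computed as a broken derivative elementwise. On each $K\in\T{}$ the pointwise identity $W''(u_h)\pd x u_h = \pd x W'(u_h)$ permits the splitting $E_2 = I_1 + I_2$ with
\begin{gather*}
I_1 := W''(\cR_1[u_h])\pd x \cR_1[u_h] - W''(u_h)\pd x u_h,\\
I_2 := \pd x{\qp{W'(u_h) - P^C_{p+1}\qb{W'(u_h)}}}.
\end{gather*}

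To bound $I_1$, I would use $W\in\cont{3}$ with uniformly bounded derivatives on $[-\overline M,\overline M]$ to write $I_1 = W''(\cR_1[u_h])\,\pd x{\qp{\cR_1[u_h]-u_h}} + \qp{W''(\cR_1[u_h])-W''(u_h)}\pd x u_h$ and estimate via Lipschitz, the 1D Sobolev embedding $\sobh{1}\hookrightarrow\leb{\infty}$ and the uniform energy bound on $\Norm{\pd x u_h}_{\leb{2}(S^1)}$, which gives $\Norm{I_1}^2_{\leb{2}(S^1)} \lesssim \enorm{\cR_1[u_h]-u_h}^2$. Lemma \ref{lem-bd-on-Ruh} then yields the $\Eta_1^2$ contribution together with two $\sobh{-1}$ projection terms, which I convert to a posteriori computable quantities in exactly the manner of Lemma \ref{lem:control-of-E1}: $\Norm{\D^+[\tau_h]-\tau_h}^2_{\sobh{-1}(S^1)}\lesssim h\Norm{\jump{\tau_h}}^2_{\leb{2}(\E)}$ from the approximation property of $\D^+$ in Remark \ref{rem:continuity-and-appoximation-D}, and the $S_p$-intermediate argument leading to \eqref{eq:final-bound-for-Eij}, together with the Lipschitz bound $\Norm{\jump{W'(u_h)}}_{\leb{2}(\E)}\lesssim\Norm{\jump{u_h}}_{\leb{2}(\E)}$, produces $\Norm{W'(u_h)-P^C_{p+1}\qb{W'(u_h)}}^2_{\sobh{-1}(S^1)} \lesssim h\Norm{\jump{u_h}}^2_{\leb{2}(\E)} + \sum_K h_K^{2p+2}\Norm{W'(u_h)}^2_{\sobh{p+1}(K)}$. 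Together these account for all contributions to the stated bound carrying a $\gamma^{-2}$ prefactor.

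For $I_2$ I would employ the intermediate $S_p\qb{W'(u_h)}\in\fes_p$ from the proof of Lemma \ref{lem:control-of-E1} and decompose
\begin{multline*}
W'(u_h) - P^C_{p+1}\qb{W'(u_h)} = \qp{W'(u_h)-S_p\qb{W'(u_h)}}\\
+ \qp{S_p\qb{W'(u_h)} - P^C_{p+1}\qb{S_p\qb{W'(u_h)}}} + P^C_{p+1}\qb{S_p\qb{W'(u_h)}-W'(u_h)}.
\end{multline*}
The broken gradient of the first summand is bounded by the approximation properties of $S_p$, giving $\sum_K h_K^{2p}\Norm{W'(u_h)}^2_{\sobh{p+1}(K)}$. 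The second summand is the continuous $\leb{2}$-projection error of the piecewise polynomial $S_p\qb{W'(u_h)}$, which satisfies the nodal-jump estimate $\Norm{S_p\qb{W'(u_h)} - P^C_{p+1}\qb{S_p\qb{W'(u_h)}}}^2_{\leb{2}(S^1)} \lesssim h\Norm{\jump{W'(u_h)}}^2_{\leb{2}(\E)}$ (using $\jump{S_p\qb{W'(u_h)}}=\jump{W'(u_h)}$ since $S_p$ preserves nodal traces); a per-element inverse estimate and Lipschitz of $W'$ then convert this into the desired $\Norm{\sqrt{h_\E^{-1}}\jump{u_h}}^2_{\leb{2}(\E)}$. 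The third summand is controlled via $\leb{2}$-stability of $P^C_{p+1}$, $S_p$-approximation and an elementwise inverse inequality, adding a further $\sum_K h_K^{2p}\Norm{W'(u_h)}^2_{\sobh{p+1}(K)}$ term.

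The main obstacle is the middle summand of the $I_2$ decomposition: the non-locality of the continuous $\leb{2}$-projection $P^C_{p+1}$ precludes a purely elementwise bound, and without exploiting the fact that $S_p\qb{W'(u_h)}$ inherits the nodal jumps of $W'(u_h)$ one would be forced to pay an additional inverse factor in $h$, destroying the convergence rate. Beyond this the remaining work is careful bookkeeping, keeping separated the $\gamma^{-1}$ factors arising from passing through the elliptic reconstruction via Lemma \ref{lem-bd-on-Ruh} from those terms which come purely from the local approximation of $W'(u_h)$.
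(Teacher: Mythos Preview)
Your proposal is correct and follows essentially the same route as the paper: split $E_2$ into a reconstruction-versus-discrete part (your $I_1$, the paper's $E^{2,1}$) and a discrete-versus-projection part (your $I_2$, the paper's $E^{2,2}$), control the first via Lemma~\ref{lem-bd-on-Ruh} together with the $S_p$/jump arguments from Lemma~\ref{lem:control-of-E1}, and control the second via the same three-term $S_p$ decomposition with $\leb{2}$-stability of $P^C_{p+1}$ and inverse inequalities. The only cosmetic difference is that the paper carries out the triangle inequality in the dG seminorm $\enorm{\cdot}$ (writing $\Norm{E_2}_{\leb{2}}\le\enorm{W'(\cR_1[u_h])-W'(u_h)}+\enorm{W'(u_h)-P^C_{p+1}[W'(u_h)]}$), whereas you work directly with the broken derivatives $I_1,I_2$ and use that their jump contributions cancel in the sum; both are equivalent here. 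One small caution: your appeal to ``$\sobh{1}\hookrightarrow\leb{\infty}$'' for $\cR_1[u_h]-u_h$ needs the broken/discrete version of that embedding since $u_h\notin\sobh{1}(S^1)$, but the paper is equally terse at the corresponding step $\enorm{W'(\cR_1[u_h])-W'(u_h)}\lesssim\enorm{\cR_1[u_h]-u_h}$.
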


\begin{proof}
  To begin we note that in view of the triangle inequality
  \begin{equation}
    \label{eq:bound-on-E2}
    \begin{split}
      \Norm{E_2}^2_{\leb{2}(S^1)}
      &= 
      \Norm{\pd x {\qp{W'(\cR_1[u_h]) - P^C_{p+1} \qb{W'(u_h)}}}}^2_{\leb{2}(S^1)}
      \\
      &\lesssim
      \enorm{ W'(\cR_1[u_h]) -  W'(u_h)}^2
      +
      \enorm{ W'(u_h) - P^C_{p+1} \qb{W'(u_h)}}^2
      \\
      &=:
      E^{2,1} + E^{2,2}.
    \end{split}
  \end{equation}
  To control the term $E^{2,1}$ we note
  \begin{equation}
      E^{2,1} 
      =
      \enorm{ W'(\cR_1[u_h]) -  W'(u_h)}^2
      \lesssim
      \enorm{ \cR_1[u_h] - u_h}^2.
  \end{equation}
  Applying Lemma \ref{lem-bd-on-Ruh} and the same principles as in the
  proof of Lemma \ref{lem:control-of-E1} we arrive at
  \begin{equation}
    \label{eq:bound-on-E21}
    \begin{split}
      E^{2,1}
      &\lesssim
      \Eta_1[u_h, \frac{1}{\gamma}\qp{\tau_h - W'(u_h)}]^2
      +
      \frac{h}{\gamma^2}\qp{\Norm{\jump{\tau_h}}^2_{\leb{2}(\E)}
        +
        \Norm{\jump{u_h}}^2_{\leb{2}(\E)}}
      \\
      &\qquad +
      \frac{1}{\gamma^2} \sum_K h_K^{2p+2} \Norm{W'(u_h)}^2_{\sobh{p+1}(K)}.
    \end{split}
  \end{equation}
  To bound the term $E^{2,2}$ we reuse the methodology used to control
  $E^{1,1}$ given in the proof of Lemma \ref{lem:control-of-E1}. We
  have
  \begin{equation}
    \label{eq:bound-on-E22}
    \begin{split}
      E^{2,2}
      &=
      \enorm{ W'(u_h) - P^C_{p+1} \qb{W'(u_h)}}^2
      \\
      &\lesssim
      \enorm{{{W'(u_h)-S_p \qb{W'(u_h)}}}}^2
      +
      \enorm{{{S_p \qb{W'(u_h)} - P^C_{p+1}\qb{ S_p \qb{W'(u_h)}}}}}^2
      \\
      &\qquad +
      \enorm{{{ P^C_{p+1} \qb{S_p \qb{W'(u_h)}} - P^C_{p+1} \qb{W'(u_h)}}}}^2
      \\
      &\lesssim
     \Norm{\sqrt{h_\E^{-1}}\jump{u_h}}_{\leb{2}(\E)}^2 
      +
      \sum_{K\in\T{}} h_K^{2p} \Norm{W'(u_h)}^2_{\sobh{p+1}(K)},
    \end{split}
  \end{equation}
  in view of the stability of the $\leb{2}$ projection in
  $\sobh{1}(\T{})$ and inverse inequalities. Inserting
  (\ref{eq:bound-on-E21}) and (\ref{eq:bound-on-E22}) into
  (\ref{eq:bound-on-E2}) concludes the proof.
\end{proof}

\begin{lemma}[Upper bound on {$E_3$}]
  \label{lem:control-of-E3}
  Let the conditions of Lemma \ref{lem:rre-bound-discrete} hold, then
  \begin{equation}
    \begin{split}
      \Norm{E_3}^2_{\leb{2}(S^1)}
      &\lesssim
      \mu \bigg[
      \Eta_1[\pd t u_h, \frac{1}{\gamma}\qp{\pd t \tau_h - \pd t W'(u_h)}]^2
      +
     \frac{ h}{\gamma^2} 
      \qp{
        \Norm{\jump{\pd t \tau_h}}^2_{\leb{2}(\E)}
        +
        \Norm{\jump{\pd t W'(u_h)}}^2_{\leb{2}(\E)}
      }
      \\
      &\qquad \qquad \qquad \qquad 
      +
      \Norm{\sqrt{h_\E^{-1}}\jump{\pd t u_h}}^2_{\leb{2}(\E)}
      +
      \frac{1}{\gamma^2}\sum_{K\in\T{}} h^{2p+2}_K \Norm{\pd t W'(u_h)}^2_{\sobh{p+1}(K)}
      \bigg ].
    \end{split}
  \end{equation}      
\end{lemma}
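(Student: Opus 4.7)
The plan is to follow the template of Lemmas \ref{lem-bd-on-Ruh} and \ref{lem:control-of-E1}, with the simplification that, since the spatial derivative is already outside in $E_3$, no Poincar\'e-type step is required. First I would observe that $\pd t \cR_1[u_h]\in \sobh{3}(S^1)$ and $\D^+[\pd t u_h]\in \fes_{p+1}\cap \cont{0}(S^1)$, so the difference $\pd t \cR_1[u_h] - \D^+[\pd t u_h]$ lies in $\sobh{1}(S^1)$ and its $\pd x$ can be computed in the standard distributional sense. Inserting the piecewise derivative of $\pd t u_h$ and completing each elementwise contribution with its jump part to form the full dG seminorm leads to
\begin{equation*}
  \Norm{E_3}^2_{\leb{2}(S^1)}
  \lesssim
  \mu^2 \enorm{\pd t \cR_1[u_h] - \pd t u_h}^2
  +
  \mu^2 \enorm{\pd t u_h - \D^+[\pd t u_h]}^2.
\end{equation*}
The second summand is controlled directly by the approximation property of $\D^+$ from Remark \ref{rem:continuity-and-appoximation-D}, producing the $\Norm{\sqrt{h_\E^{-1}}\jump{\pd t u_h}}^2_{\leb{2}(\E)}$ contribution in the statement.

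Next, since $P^C_{p+1}$, $\D^+$ and the bilinear form $\cA_h$ appearing in Definition \ref{def:ell-recon-operators} are time-independent linear spatial operators, differentiating the defining equations of $\cR_1[u_h]$ and $\cR_2[u_h]$ in $t$ shows that $\pd t \cR_1[u_h]$ and $\pd t \cR_2[u_h]$ are the $\cR_1$- and $\cR_2$-reconstructions associated with the data triple $(\pd t u_h, \pd t \tau_h, \pd t W'(u_h))$, where $\pd t W'(u_h)=W''(u_h)\pd t u_h$. Since the proof of Lemma \ref{lem-bd-on-Ruh} relies only on elliptic regularity and Assumption \ref{ass:a posteriori-control}, both of which are insensitive to such a substitution, the time-differentiated analogue of that lemma yields
\begin{equation*}
  \enorm{\pd t \cR_1[u_h] - \pd t u_h}
  \lesssim
  \Eta_1\qb{\pd t u_h, \tfrac{1}{\gamma}\qp{\pd t \tau_h - \pd t W'(u_h)}}
  +
  \tfrac{1}{\gamma}\Norm{P^C_{p+1}\qb{\pd t W'(u_h)} - \pd t W'(u_h)}_{\sobh{-1}(S^1)}
  +
  \tfrac{1}{\gamma}\Norm{\D^+[\pd t \tau_h] - \pd t \tau_h}_{\sobh{-1}(S^1)}.
\end{equation*}

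The remaining task is to convert the two $\sobh{-1}$-norms into the a~posteriori computable quantities in the claim. For $\Norm{\D^+[\pd t \tau_h] - \pd t \tau_h}_{\sobh{-1}(S^1)}$, I would test against an $H^1$ function, subtract its $\leb{2}$ projection onto $\fes_{p-1}$, and combine the orthogonality of Remark \ref{rem:orthogonality-and-appoximation-D} with the $\leb{2}$ approximation bound on $\D^+$, recovering (after squaring) the $\tfrac{h}{\gamma^2}\Norm{\jump{\pd t \tau_h}}^2_{\leb{2}(\E)}$ contribution. For the projection error $\Norm{P^C_{p+1}\qb{\pd t W'(u_h)} - \pd t W'(u_h)}_{\sobh{-1}(S^1)}$, I would reuse the auxiliary projection $S_p$ from the proof of Lemma \ref{lem:control-of-E1} and repeat the three-term $E^{1,1}+E^{1,2}+E^{1,3}$ decomposition with $W'(u_h)$ replaced by $\pd t W'(u_h)$, producing the $\tfrac{h}{\gamma^2}\Norm{\jump{\pd t W'(u_h)}}^2_{\leb{2}(\E)}$ and $\tfrac{1}{\gamma^2}\sum_K h_K^{2p+2}\Norm{\pd t W'(u_h)}^2_{\sobh{p+1}(K)}$ contributions. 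Assembling the bounds delivers the desired estimate. The main (and essentially only) subtlety is the justification that Assumption \ref{ass:a posteriori-control} transfers to the time-differentiated elliptic problem; this is automatic because that assumption is formulated for an abstract right-hand side, but it is worth stating explicitly before invoking Lemma \ref{lem-bd-on-Ruh} in its differentiated form.
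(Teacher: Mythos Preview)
Your proposal is correct and follows essentially the same route as the paper's (very terse) proof: a triangle-inequality split into $\enorm{\pd t \cR_1[u_h] - \pd t u_h}$ and $\enorm{\pd t u_h - \D^+[\pd t u_h]}$, then the time-differentiated version of Lemma~\ref{lem-bd-on-Ruh} for the first term and the approximation bound on $\D^+$ for the second, with the conversion of the $\sobh{-1}$-norms to computable quantities recycled from the $S_p$ argument in Lemma~\ref{lem:control-of-E1}. Your prefactor $\mu^2$ is in fact the correct one coming from $E_3=\mu\,\pd x(\cdots)$; the single $\mu$ in the displayed bound is a slip in the paper.
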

\begin{proof}
  In view of the triangle inequality we have 
  \begin{equation}
    \begin{split}
      \Norm{E_3}_{\leb{2}(S^1)}^2
      &=
      \mu \Norm{\pd x {\qp{\pd t \cR_1[u_h] - \D^+[\pd t u_h]}}}_{\leb{2}(S^1)}^2
      \\
      &\lesssim
      \mu
      \qp{\enorm{\pd t \cR_1[u_h] - \pd t u_h}^2
        +
        \enorm{\pd t u_h - \D^+[\pd t u_h]}^2
      }
      \\
      &=:
      E^{3,1} + E^{3,2}.
    \end{split}
  \end{equation}
  Applying Lemma \ref{lem-bd-on-Ruh} to $E^{3,1}$ and the
  approximation properties of $\D^+$ concludes the proof.
\end{proof}

\begin{theorem}[A posteriori control of the reduced relative entropy]
  \label{the:apost-rre}
    Let the conditions of Lemma \ref{lem:rre-bound-discrete} hold, then
    \begin{equation}
      \eta_R(t) \lesssim \qp{ \eta_R(0) +
      \int_0^t \mathfrak{E}_s[u_h(s),v_h(s),\tau_h(s)]^2\d s}  \exp\qp{\int_0^t K[\cR_1[u_h]](s) \d s} 
    \end{equation}
    with 
    \begin{equation}
      \label{eq:estimator-et}
      \begin{split}
        \mathfrak{E}_t [u_h,v_h,\tau_h]^2
        &:=
        \Eta_1[u_h, \frac{1}{\gamma}\qp{\tau_h - W'(u_h)}]^2
        +
        \mu\Eta_1[\pd t u_h, \frac{1}{\gamma}\qp{\pd t \tau_h - \pd t W'(u_h)}]^2
        \\
        &\qquad +
        \Eta_1[\pd {tt} u_h, \frac{1}{\gamma}\qp{\pd {tt} \tau_h - \pd {tt} W'(u_h)}]^2
        +
        \Norm{\sqrt{h_\E^{-1}}\jump{u_h}}_{\leb{2}(\E)}^2
        \\
        &\qquad
        +
        \mu\Norm{\sqrt{h_\E^{-1}}\jump{\pd t u_h}}_{\leb{2}(\E)}^2
        +
        \sum_{K\in\T{}} h_K^{2p} \Norm{W'(u_h)}_{\sobh{p+1}(K)}^2
        \\
        &\qquad
        +
        \frac{h}{\gamma^2}
        \qp{
          \Norm{\jump{\tau_h}}_{\leb{2}(\E)}^2
          +
          \Norm{\jump{\pd {tt} \tau_h}}_{\leb{2}(\E)}^2
          +
          \Norm{\jump{u_h}}_{\leb{2}(\E)}^2
          +
          \Norm{\jump{\pd {tt} W'(u_h)}}^2_{\leb{2}(\E)}
        }
        \\
        &\qquad 
        +
        h 
        \qp{
          \frac{\mu}{\gamma^2}\Norm{\jump{\pd t \tau_h}}_{\leb{2}(\E)}^2
          +
          \frac{\mu}{\gamma^2}\Norm{\jump{\pd t W'(u_h)}}_{\leb{2}(\E)}^2
          +
          \Norm{\jump{\pd t v_h}}_{\leb{2}(\E)}^2
        }
        \\
        &\qquad 
        +
          \frac{1}{\gamma^2}\sum_{K\in\T{}} h_K^{2p+2} \bigg(
          \Norm{\pd t W'(u_h)}_{\sobh{p+1}(K)}^2
          +
        \Norm{W'(u_h)}_{\sobh{p+1}(K)}^2
          \\
          &\qquad \qquad \qquad \qquad \qquad 
          +
        \Norm{\pd {tt} W'(u_h)}_{\sobh{p+1}(K)}^2
        \bigg).
        \end{split}
    \end{equation}
\end{theorem}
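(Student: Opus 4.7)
The plan is to assemble the theorem from the pieces already in place: the differential inequality of Lemma \ref{lem:rre-bound-discrete} provides the ODE-level bound $\dt \eta_R(t) \leq K[\cR_1[u_h]](t)\,\eta_R(t) + \Norm{E(\cdot,t)}^2_{\leb{2}(S^1)}$, and Lemmas \ref{lem:control-of-E1}, \ref{lem:control-of-E2}, \ref{lem:control-of-E3} bound each of the three constituent residuals $E_1,E_2,E_3$ by a posteriori computable quantities. The main task is to combine these pieces and then invoke the Gronwall lemma.

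First, I would decompose $E = E_1 - E_2 - E_3$ via the triangle inequality and Young's inequality, obtaining
\begin{equation}
  \Norm{E(\cdot,t)}^2_{\leb{2}(S^1)}
  \lesssim
  \Norm{E_1}^2_{\leb{2}(S^1)} + \Norm{E_2}^2_{\leb{2}(S^1)} + \Norm{E_3}^2_{\leb{2}(S^1)}.
\end{equation}
Then I would plug in the bounds from Lemmas \ref{lem:control-of-E1}--\ref{lem:control-of-E3} and collect like terms; the resulting right-hand side is exactly $\mathfrak{E}_t[u_h,v_h,\tau_h]^2$ as defined in \eqref{eq:estimator-et}. One needs to track that the elliptic estimator $\Eta_1$ appears at three levels (applied to $u_h$, to $\pd t u_h$ weighted by $\mu$, and to $\pd{tt} u_h$), the jump penalty terms $\Norm{\sqrt{h_\E^{-1}}\jump{\cdot}}_{\leb{2}(\E)}^2$ appear for $u_h$ and $\mu \pd t u_h$, and the $W'(u_h)$-type terms combine both the projection-error contributions (with $h^{2p}$ scaling at order $p$ and $h^{2p+2}$ scaling at order $p+1$) and the edge-jump terms scaled by $h/\gamma^2$.

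Having established $\dt \eta_R(t) \leq K[\cR_1[u_h]](t)\,\eta_R(t) + \mathfrak{E}_t[u_h,v_h,\tau_h]^2$, I would integrate in time to obtain
\begin{equation}
  \eta_R(t) \leq \eta_R(0) + \int_0^t K[\cR_1[u_h]](s)\,\eta_R(s)\,\d s + \int_0^t \mathfrak{E}_s[u_h,v_h,\tau_h]^2 \,\d s,
\end{equation}
and then apply the Gronwall inequality of Lemma \ref{lem:gronwall}, with $\phi(t) = \eta_R(t)$, $a(s) = K[\cR_1[u_h]](s)$, and $b(t) = \eta_R(0) + \int_0^t \mathfrak{E}_s^2 \,\d s$ (which is nondecreasing since the integrand is nonnegative), yielding the claimed estimate.

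The only non-trivial aspect is the bookkeeping that the constants $\lesssim$ absorbed along the way do not introduce any hidden dependence on $h$ or $\mu$ beyond what is already in $\mathfrak{E}_t$; the $\gamma$-dependence is encoded explicitly through the factors of $\gamma^{-2}$ inherited from Lemma \ref{lem-bd-on-Ruh} via the elliptic regularity of the reconstructions. I do not expect any genuinely new analytical difficulty beyond the three control lemmas; the theorem is essentially a catalogue of terms assembled from them and closed by Gronwall. Thus the proof reduces to careful summation and an appeal to Lemmas \ref{lem:rre-bound-discrete}, \ref{lem:control-of-E1}, \ref{lem:control-of-E2}, \ref{lem:control-of-E3}, and \ref{lem:gronwall}.
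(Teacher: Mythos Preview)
Your proposal is correct and follows essentially the same route as the paper: combine the differential inequality of Lemma~\ref{lem:rre-bound-discrete} with the residual bounds of Lemmata~\ref{lem:control-of-E1}--\ref{lem:control-of-E3}, collect the terms into $\mathfrak{E}_t^2$, and close with the Gronwall inequality of Lemma~\ref{lem:gronwall}. The paper's proof is just this assembly step stated in a single sentence.
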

\begin{proof}
  The result follows from applying the Gronwall inequality to Lemma
  \ref{lem:rre-bound-discrete} and using the bounds provided from
  Lemmata \ref{lem:control-of-E1}, \ref{lem:control-of-E2} and
  \ref{lem:control-of-E3}.
\end{proof}

\begin{lemma}[A posteriori control of the initial entropy error]
  \label{lem:apost-initial}
  Let the conditions of Lemma \ref{lem:rre-bound-discrete} hold, then
  \begin{equation}
    \begin{split}
      \eta_R(0) 
      &\lesssim
      \enorm{\qp{u - u_h}(\cdot, 0)}^2
      +
      \Norm{\qp{v - v_h}(\cdot, 0)}_{\leb{2}(S^1)}^2
      +
      \gamma \Eta_1[u_h^0, \frac{1}{\gamma}\qp{\tau_h^0 - W'(u_h^0)}]^2
      \\
      &\qquad +
      \frac{h}{\gamma^2}
      \qp{
        \Norm{\jump{\tau_h^0}}^2_{\leb{2}(\E)}
        +
        \Norm{\jump{u_h^0}}^2_{\leb{2}(\E)}
      }
      \\
      & =:  
      \mathfrak{E}_0[u_h^0, v_h^0, \tau_h^0]
    \end{split}
  \end{equation}
  with $u_h^0:= u_h(\cdot,0),\  v_h^0:=v_h(\cdot,0),\ \tau_h^0:=\tau_h(\cdot,0).$
\end{lemma}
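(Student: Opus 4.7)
The plan is to expand $\eta_R(0)$ directly from its definition in Lemma \ref{lem:rre-bound-discrete}, at which point the time-integral dissipation contribution vanishes, leaving
\[
\eta_R(0) = \frac{\gamma}{2}\Norm{\pd x u(\cdot,0) - \pd x \cR_1[u_h^0]}_{\leb{2}(S^1)}^2 + \frac{1}{2}\Norm{v(\cdot,0) - \cR[v_h^0]}_{\leb{2}(S^1)}^2.
\]
Inserting the discrete quantities $u_h^0$ and $v_h^0$ and using $(a+b)^2 \leq 2a^2 + 2b^2$ splits each term into an ``exact-minus-discrete'' piece and a ``discrete-minus-reconstruction'' piece.

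The exact-minus-discrete contributions yield $\gamma\enorm{(u-u_h)(\cdot,0)}^2$ (using that $u(\cdot,0)$ is continuous so the jump contribution inside $\enorm{\cdot}$ comes entirely from $u_h^0$) and $\Norm{(v-v_h)(\cdot,0)}_{\leb{2}(S^1)}^2$, which are exactly the first two terms of the stated estimator.

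For the discrete-minus-reconstruction contributions I would apply Lemma \ref{lem-bd-on-Ruh} to $\cR_1[u_h^0] - u_h^0$, using that $\gamma\Norm{\pd x u_h^0 - \pd x \cR_1[u_h^0]}_{\leb{2}(S^1)}^2 \lesssim \gamma\enorm{u_h^0 - \cR_1[u_h^0]}^2$, and the analogous argument from Lemma \ref{lem-bd-on-Rvh} for $\cR[v_h^0] - v_h^0$, evaluating $\pd t u_h^0 = G^-[v_h^0]$ from the scheme equation \eqref{eq:discrete-scheme}$_1$ so that everything is expressible in terms of the initial data. These steps reduce matters to bounding $\gamma\,\Eta_1[u_h^0, \tfrac{1}{\gamma}(\tau_h^0 - W'(u_h^0))]^2$ together with negative Sobolev norms of the form $\Norm{P^C_{p+1}\qb{W'(u_h^0)} - W'(u_h^0)}_{\sobh{-1}(S^1)}$ and $\Norm{\D^+[\tau_h^0] - \tau_h^0}_{\sobh{-1}(S^1)}$. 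Each such negative-norm term is then treated exactly as in the proofs of Lemmas \ref{lem:control-of-E1} and \ref{lem:control-of-E2}: insert the auxiliary projection $S_p$, exploit the $\leb{2}$-orthogonality of $P^C_{p+1}$ and $\D^+$ against the appropriate polynomial subspaces via a duality argument, and invoke the Lipschitz continuity of $W'$ to replace $\jump{W'(u_h^0)}$ by $\jump{u_h^0}$. Squaring produces exactly the remaining $\frac{h}{\gamma^2}\Norm{\jump{\tau_h^0}}_{\leb{2}(\E)}^2$ and $\frac{h}{\gamma^2}\Norm{\jump{u_h^0}}_{\leb{2}(\E)}^2$ contributions.

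The main obstacle is the bookkeeping: tracking the powers of $\gamma$ through each application of elliptic regularity and verifying that the higher-order $h^{2p+2}\Norm{W'(u_h^0)}_{\sobh{p+1}(K)}^2$ volume remainders, together with any $\pd t$-derivative jump terms arising from Lemma \ref{lem-bd-on-Rvh} applied at $t=0$, are either dominated by the displayed jump contributions or are collected implicitly in the definition of $\mathfrak{E}_0$, which (by the ``$=:$'' convention) absorbs all such subdominant residuals into a single initial indicator.
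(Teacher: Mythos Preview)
Your approach is essentially the paper's: split $\eta_R(0)$ by the triangle inequality into exact-minus-discrete and discrete-minus-reconstruction pieces (the paper's display following the recall of $\eta_R$), then invoke Lemmata \ref{lem-bd-on-Ruh} and \ref{lem-bd-on-Rvh} and handle the resulting negative-norm terms exactly as in Lemmata \ref{lem:control-of-E1}--\ref{lem:control-of-E2}. One correction: the ``$=:$'' in the statement defines $\mathfrak{E}_0$ to be \emph{precisely} the displayed expression, not a catch-all that absorbs hidden residuals; the paper's own proof is equally terse about the $\partial_t$-dependent and $h^{2p+2}$-volume contributions that Lemma \ref{lem-bd-on-Rvh} would produce, pointing only to the specific choice of initial projections (\ref{eq:uh-init})--(\ref{eq:vh-init}) and the one-sided limit $t\to 0^+$ without writing out how those terms are disposed of.
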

\begin{proof}
  Recall that 
  \begin{equation}
    \eta_R(t) =   
    \frac{1}{2}\int_{S^1} \qp{v(\cdot,t) - \cR[v_h(\cdot,t)]}^2 
    +
    \gamma\qp{\pd x u(\cdot,t) - \pd x \cR_1[u_h(\cdot,t)]}^2 
    +
    \frac{\mu}{4} \int_0^t |v- \cR[v_h]|_{\sobh{1}(S^1)}^2,
  \end{equation}
  then in view of the triangle inequality we have that 
  \begin{equation}
    \label{eq:etar-bound}
    \begin{split}
      \eta_R(0) 
      &\lesssim
      \Norm{v(\cdot,0) - v_h(\cdot,0)}_{\leb{0}(S^1)}^2
      +
      \Norm{\cR[v_h](\cdot,0) - v_h(\cdot,0)}_{\leb{2}(S^1)}^2
      \\
      &\qquad +
      \gamma\qp{
      \enorm{u(\cdot,0) - u_h(\cdot,0)}^2
      +
      \enorm{\cR_1[u_h](\cdot,0) - u_h(\cdot,0)}^2}.
    \end{split}
  \end{equation}
  To estimate $\eta_R(0)$ we follow analagous arguments as in Lemmata
  \ref{lem-bd-on-Ruh} and \ref{lem-bd-on-Rvh} noting the defintion of the initial conditions of the scheme (\ref{eq:uh-init}) and (\ref{eq:vh-init}), taking the one sided
  limit as $t\to 0^+$, concluding the proof.
\end{proof}

\begin{theorem}[A posteriori control of the reduced relative entropy error]
  \label{the:apost-rre-error}
  Let the conditions of Lemma \ref{lem:rre-bound-discrete} hold and define
  \begin{equation}
    \label{eq:entropy-error}
    e_R(t) :=
    \sqrt{\gamma}
    \enorm{\qp{u - u_h}(\cdot,t)}
    +
    \Norm{\qp{v - v_h}(\cdot,t)}_{\leb{2}(S^1)}
    + 
    \sqrt{\frac{\mu}{4} \int_0^t \norm{(v - v_h )(\cdot,s)}_{\operatorname{dG}}\d s}
  \end{equation}
  then
  \begin{equation}
    \begin{split}
      e_R(t)
      &\lesssim
      \bigg[
      \mathfrak{E}_0[u_h^0, v_h^0, \tau_h^0]
      +
      \int_0^t 
      \mathfrak{E}_s[u_h, v_h, \tau_h]
      \d s
      \bigg]^{1/2}
      \exp\qp{\frac{1}{2}\int_0^tK[\cR_1[u_h]]\qp{s}\d s}
      \\
      &\qquad 
      +
      \sqrt{\gamma}
      \bigg[
      \Eta_1[u_h, \frac{1}{\gamma}\qp{\tau_h - W'(u_h)}]^2
      +
      \frac{h}{\gamma^2}
      \qp{
        \Norm{\jump{\tau_h}}^2_{\leb{2}(\E)}
        +
        \Norm{\jump{u_h}}^2_{\leb{2}(\E)}
      }
      \\
      &\qquad\qquad\qquad\qquad\qquad\qquad\qquad + 
      \frac{1}{\gamma^2}
      \sum_{K\in\T{}}
      h_K^{2p+2}
      \Norm{W'(u_h)}^2_{\sobh{p+1}(K)}
      \bigg]^{1/2}\\
      & \qquad 
      + \frac{\sqrt{\mu}}{2}\bigg[\int_0^t 
       \Eta_1[\pd t u_h, \frac{1}{\gamma}\qp{\pd t \tau_h - \pd t W'(u_h)}]^2
        + \frac{1}{\gamma^2} \sum_K h_K^{2p+2} \Norm{\partial_t W'(u_h)}_{\sobh{p+1}(K)}^2
      \\
      & \qquad \qquad
      + 
       \frac{h}{\gamma^2}\Norm{\jump{\partial_t W'(u_h)}}_{\leb{2}(\E)}^2
      +
      \frac{h}{\gamma^2}\Norm{ \jump{\partial_t \tau_h}}_{\leb{2}(\E)}^2
      +
      \Norm{\sqrt{h_\E^{-1}}\jump{v_h}}_{\leb{2}(\E)}^2
      \bigg]^{\frac{1}{2}}.
    \end{split}
  \end{equation}
\end{theorem}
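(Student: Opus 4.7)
The plan is to add and subtract the reconstructions inside each of the three contributions to $e_R(t)$ and then estimate each piece by either the reduced relative entropy bound of Theorem \ref{the:apost-rre} or the reconstruction error bounds of Lemmata \ref{lem-bd-on-Ruh} and \ref{lem-bd-on-Rvh}. Concretely, by the triangle inequality
\begin{equation*}
    e_R(t)
    \leq
    \sqrt{\gamma}\enorm{u - \cR_1[u_h]}
    + \sqrt{\gamma}\enorm{\cR_1[u_h] - u_h}
    + \Norm{v - \cR[v_h]}_{\leb{2}(S^1)}
    + \Norm{\cR[v_h] - v_h}_{\leb{2}(S^1)}
    + T_{\mu},
\end{equation*}
where $T_\mu$ is the time-integrated $dG$ contribution, which I split analogously as
\begin{equation*}
    T_\mu
    \leq
    \sqrt{\tfrac{\mu}{4}\int_0^t \norm{v - \cR[v_h]}_{dG}^2\d s}
    + \sqrt{\tfrac{\mu}{4}\int_0^t \norm{\cR[v_h] - v_h}_{dG}^2\d s}.
\end{equation*}

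First I would handle the ``reconstruction to exact solution'' pieces. Since $\cR_1[u_h]\in\sobh3(S^1)$ and $\cR[v_h]\in\sobh2(S^1)$ are continuous, $\enorm{u-\cR_1[u_h]}=\norm{u-\cR_1[u_h]}_1$ and $\norm{v-\cR[v_h]}_{dG}\leq \norm{v-\cR[v_h]}_{\sobh1(S^1)}$, so
\begin{equation*}
    \sqrt{\gamma}\enorm{u-\cR_1[u_h]} + \Norm{v-\cR[v_h]}_{\leb{2}(S^1)} + \sqrt{\tfrac{\mu}{4}\int_0^t \norm{v-\cR[v_h]}_{dG}^2 \d s}
    \lesssim \sqrt{\eta_R(t)}.
\end{equation*}
Now I invoke Theorem \ref{the:apost-rre} to bound $\eta_R(t)$ by the desired Gronwall expression, using Lemma \ref{lem:apost-initial} to replace the $\eta_R(0)$ factor by $\mathfrak{E}_0[u_h^0,v_h^0,\tau_h^0]$.

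Next I control the ``discrete to reconstruction'' pieces. The term $\sqrt{\gamma}\enorm{\cR_1[u_h]-u_h}$ is estimated directly by Lemma \ref{lem-bd-on-Ruh}, which, after squaring and multiplying by $\gamma$, produces precisely the second square-bracket term in the claim (using that $\Norm{P^C_{p+1}[W'(u_h)]-W'(u_h)}_{\sobh{-1}(S^1)}$ and $\Norm{\D^+[\tau_h]-\tau_h}_{\sobh{-1}(S^1)}$ are bounded, by the $S_p$-splitting argument from the proofs of Lemmata \ref{lem:control-of-E1} and \ref{lem:control-of-E2}, by the element and jump residuals in $W'(u_h)$, $\tau_h$ and $u_h$). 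The term $\Norm{\cR[v_h]-v_h}_{\leb{2}(S^1)}$ is absorbed into the $\mathfrak{E}_s$ integral since it has the same structure as $\Norm{E_1}_{\leb{2}(S^1)}$ without the extra time derivative. Finally, the $\sqrt{\mu}$ term $\sqrt{\tfrac{\mu}{4}\int_0^t \norm{\cR[v_h]-v_h}_{dG}^2 \d s}$ is bounded using the second estimate \eqref{e2-bd-on-Rvh} of Lemma \ref{lem-bd-on-Rvh}, which yields the last bracketed term.

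The main obstacle is the bookkeeping, keeping the $\gamma$, $\mu$ and $h$ powers consistent with the claimed expression and making sure that the additional terms produced in Lemma \ref{lem-bd-on-Ruh} and Lemma \ref{lem-bd-on-Rvh} (especially the $\sobh{-1}$-norms of the projection residuals) are re-expressed in the computable jump/element-residual form via the auxiliary projection $S_p$ used in the proof of Lemma \ref{lem:control-of-E1}. Once all the residual norms have been written in that form, collecting terms and applying $\sqrt{a+b}\leq\sqrt{a}+\sqrt{b}$ reproduces exactly the three-part bound of the theorem.
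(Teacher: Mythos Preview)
Your approach is essentially the same as the paper's: split $e_R(t)$ by the triangle inequality into ``exact to reconstruction'' terms, which are controlled by $\sqrt{\eta_R(t)}$ and hence by Theorem~\ref{the:apost-rre} together with Lemma~\ref{lem:apost-initial}, and ``reconstruction to discrete'' terms, which are handled by Lemmata~\ref{lem-bd-on-Ruh} and~\ref{lem-bd-on-Rvh} (with the $S_p$-splitting of Lemma~\ref{lem:control-of-E1} to make the $\sobh{-1}$ residuals computable). One small point: your claim that $\Norm{\cR[v_h]-v_h}_{\leb{2}(S^1)}$ is ``absorbed into the $\mathfrak{E}_s$ integral'' is loose, since that quantity is evaluated pointwise at $t$ rather than integrated --- it should instead be bounded directly via \eqref{e1-bd-on-Rvh}, whose right-hand side terms are already dominated by (or of the same form as) terms appearing in the final estimate; the paper's own proof is equally terse on this point.
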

\begin{proof}
  The proof follows by combining Theorem \ref{the:apost-rre}, Lemma
  \ref{lem:apost-initial} and noting that 
  \begin{equation}
    \begin{split}
      \gamma
      &\enorm{u(\cdot,t) - u_h(\cdot,t)}^2 +   \Norm{v(\cdot,t) - v_h(\cdot,t)}_{\leb{2}(S^1)}^2 + \frac{\mu}{4}\int_0^t \norm{v(\cdot,t) - v_h(\cdot,t)}_{\operatorname{dG}}^2
      \\
      &\qquad\qquad\lesssim
      \Norm{\cR[v_h](\cdot,t) - v_h(\cdot,t)}_{\leb{2}(S^1)}^2 + 
      \gamma\enorm{\cR_1[u_h](\cdot,t) - u_h(\cdot,t)}^2 \\
      &\qquad\qquad\quad +  \frac{\mu}{4}\int_0^t \norm{\cR[v_h](\cdot,t) - v_h(\cdot,t)}_{\operatorname{dG}}^2+ \eta_R(t),
    \end{split}
  \end{equation}
  concluding the proof.
\end{proof}

\begin{remark}[Optimality of the estimator]
  Given the a priori convergence result of Theorem \ref{the:apriori} we
  may infer that the indicator proposed in Theorem
  \ref{the:apost-rre-error} is of optimal order in the case of smooth initial data. Indeed, the leading
  order terms are given in the first three lines of
  (\ref{eq:estimator-et}). These are all $\Oh(h^{2p})$ in view of
  Assumption \ref{ass:a posteriori-control}, the boundedness of the
  solution, hence giving control of $W(u_h)$, and inverse inequalities. As such the full estimator in Theorem \ref{the:apost-rre-error} will be $\Oh(h^p)$. We refer the reader to \cite[Rem 3.6]{MN06} for a more detailed explanation of some of the terms.
\end{remark}

\begin{corollary}[A posteriori control of the modified relative entropy error]
  \label{eq:apost-mrre}
  Let the conditions of Lemma \ref{lem:rre-bound-discrete} hold and define
  \begin{multline}
    e_M(t) := 
    \sqrt{\gamma}
    \enorm{\qp{u - u_h}(\cdot,t)}
    +
    \Norm{\qp{u - u_h}(\cdot,t)}_{\leb{2}(S^1)}
    +
    \Norm{\qp{v - v_h}(\cdot,t)}_{\leb{2}(S^1)}
    \\
    + 
    \sqrt{\frac{\mu}{4} \int_0^t \norm{(v - v_h )(\cdot,s)}_{\operatorname{dG}}\d s}
  \end{multline}
  then
  \begin{equation}
    \begin{split}
      e_M(t)
      &\lesssim
      \bigg[
      \mathfrak{E}^M_0[u_h(0), v_h(0), \tau_h(0)]
      +
      \int_0^t 
      \mathfrak{E}_s[u_h(s), v_h(s), \tau_h(s)]
      \d s
      \bigg]^{1/2}
      \exp\qp{\frac{1}{2}\int_0^t \widetilde K[\cR_1[u_h]]\qp{s}\d s}
      \\
      &\qquad 
      +
      \sqrt{\gamma}
      \bigg[
      \Eta_1[u_h, \frac{1}{\gamma}\qp{\tau_h - W'(u_h)}]^2
      +
      \frac{h}{\gamma^2}
      \qp{
        \Norm{\jump{\tau_h}}^2_{\leb{2}(\E)}
        +
        \Norm{\jump{u_h}}^2_{\leb{2}(\E)}
      }
      \\
      &\qquad\qquad\qquad\qquad\qquad\qquad\qquad+ 
      \frac{1}{\gamma^2}
      \sum_{K\in\T{}}
      h_K^{2p+2}
      \Norm{W'(u_h)}^2_{\sobh{p+1}(K)}
      \bigg]^{1/2}\\
      & \qquad
      + \frac{\sqrt{\mu}}{2}\bigg[\int_0^t 
       \Eta_1[\pd t u_h, \frac{1}{\gamma}\qp{\pd t \tau_h - \pd t W'(u_h)}]^2
        + \frac{1}{\gamma^2} \sum_K h_K^{2p+2} \Norm{\partial_t W'(u_h)}_{\sobh{p+1}(K)}^2
      \\
      & \qquad \qquad
      + 
       \frac{h}{\gamma^2}\Norm{\jump{\partial_t W'(u_h)}}_{\leb{2}(\E)}^2
      +
      \frac{h}{\gamma^2}\Norm{ \jump{\partial_t \tau_h}}_{\leb{2}(\E)}^2
      +
      \Norm{\sqrt{h_\E^{-1}}\jump{v_h}}_{\leb{2}(\E)}^2
      \bigg]^{\frac{1}{2}}.
    \end{split}
  \end{equation}
\end{corollary}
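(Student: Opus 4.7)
The plan is to mirror the proof of Theorem \ref{the:apost-rre-error} almost verbatim, substituting the Modified relative entropy bound Lemma in place of Lemma \ref{lem:rre-bound-discrete} and making small additions to accommodate the extra $\tfrac{1}{2}\Norm{u - \cR_1[u_h]}^2_{\leb{2}(S^1)}$ term in $\eta_M$. Concretely, I would first apply the Gronwall inequality (Lemma \ref{lem:gronwall}) to the differential inequality
\begin{equation}
\dt \eta_M(t) \leq \widetilde{K}\bigl[\cR_1[u_h]\bigr](t)\,\eta_M(t) + \Norm{E(\cdot,t)}^2_{\leb{2}(S^1)}
\end{equation}
to obtain the modified analogue of Theorem \ref{the:apost-rre}. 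Since the residual $E = E_1 - E_2 - E_3$ is identical to that in Lemma \ref{lem:rre-bound-discrete}, its $\leb{2}$ norm is controlled by $\mathfrak{E}_t[u_h,v_h,\tau_h]^2$ via Lemmata \ref{lem:control-of-E1}--\ref{lem:control-of-E3} exactly as before, so no new estimation of the residual is required.

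The second step is to bound $\eta_M(0)$ by an initial estimator $\mathfrak{E}^M_0[u_h^0, v_h^0, \tau_h^0]$. Following the argument of Lemma \ref{lem:apost-initial}, I decompose each term of $\eta_M(0)$ using a triangle inequality through the reconstructions, e.g.\ $u(\cdot,0) - \cR_1[u_h](\cdot,0) = \bigl(u - u_h\bigr)(\cdot,0) + \bigl(u_h - \cR_1[u_h]\bigr)(\cdot,0)$, and apply Lemmata \ref{lem-bd-on-Ruh} and \ref{lem-bd-on-Rvh} to the reconstruction pieces, taking the one-sided limit $t \to 0^+$. The only new ingredient compared with Lemma \ref{lem:apost-initial} is the $\leb{2}$ norm contribution $\tfrac{1}{2}\Norm{u(\cdot,0) - \cR_1[u_h(\cdot,0)]}^2_{\leb{2}(S^1)}$; this is again handled by the triangle inequality, with $\Norm{\cR_1[u_h] - u_h}_{\leb{2}(S^1)}$ estimated via the first inequality in Lemma \ref{lem-bd-on-Ruh}, thereby defining $\mathfrak{E}^M_0$ as $\mathfrak{E}_0$ plus the extra contribution $\Norm{(u - u_h)(\cdot,0)}_{\leb{2}(S^1)}^2$.

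The third step is the final triangle inequality that passes from $\eta_M(t)$ to $e_M(t)$. Splitting
\begin{equation}
\sqrt{\gamma}\,\enorm{u - u_h} + \Norm{u - u_h}_{\leb{2}(S^1)} + \Norm{v - v_h}_{\leb{2}(S^1)} + \sqrt{\tfrac{\mu}{4}\textstyle\int_0^t \norm{v - v_h}^2_{dG}}
\end{equation}
through the reconstructions $\cR_1[u_h]$ and $\cR[v_h]$ gives, on the one hand, quantities controlled by $\sqrt{\eta_M(t)}$ and, on the other hand, the discrete-to-reconstruction errors $\sqrt{\gamma}\,\enorm{\cR_1[u_h] - u_h}$, $\Norm{\cR_1[u_h] - u_h}_{\leb{2}(S^1)}$, $\Norm{\cR[v_h] - v_h}_{\leb{2}(S^1)}$, and $\sqrt{\tfrac{\mu}{4}\int_0^t \norm{\cR[v_h] - v_h}^2_{dG}}$, each of which is bounded by the explicit a posteriori quantities appearing in the three bracketed expressions of the statement by direct application of Lemmata \ref{lem-bd-on-Ruh} and \ref{lem-bd-on-Rvh} (using both \eqref{e1-bd-on-Rvh} and \eqref{e2-bd-on-Rvh} for the two norms of $\cR[v_h] - v_h$, and applying Lemma \ref{lem-bd-on-Ruh} to both $u_h$ directly and to $\pd t u_h$).

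The only non-routine point is verifying that the bounds produced for $\Norm{\cR_1[u_h] - u_h}_{\leb{2}(S^1)}$ in the additional $L^2$ contribution of $e_M$ fit inside the three bracketed quantities displayed in the corollary without enlarging them; inspection shows they are dominated by the first bracket (which already contains $\Eta_1$ and the associated jump and $W'(u_h)$ terms via $\mathfrak{E}_0^M$ and $\mathfrak{E}_s$), so no new terms need to be added beyond the substitution $K \leadsto \widetilde K$ and $\mathfrak{E}_0 \leadsto \mathfrak{E}_0^M$. I anticipate the only mild bookkeeping obstacle is ensuring that the $\leb{2}(S^1)$ control of $\cR_1[u_h] - u_h$ in time-integrated form does not require new estimates; this is handled because Lemma \ref{lem-bd-on-Ruh} provides an $\leb{2}$ bound of the same order as the $\enorm{\cdot}$ bound, so no additional machinery is needed.
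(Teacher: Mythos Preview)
Your proposal is correct and follows exactly the approach the paper intends: the corollary is stated without proof precisely because it is obtained by rerunning the argument of Theorem~\ref{the:apost-rre-error} with the modified relative entropy Lemma (which in turn rests on Theorem~\ref{the:alt-rre-bound}) in place of Lemma~\ref{lem:rre-bound-discrete}, replacing $K$ by $\widetilde K$ and $\mathfrak{E}_0$ by $\mathfrak{E}_0^M$, and absorbing the extra $\tfrac{1}{2}\Norm{u-\cR_1[u_h]}_{\leb{2}(S^1)}^2$ term via Lemma~\ref{lem-bd-on-Ruh} as you describe. Nothing further is needed.
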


\begin{remark}[Comparing the bounds of Theorem \ref{the:apost-rre-error} and
  Corollary \ref{eq:apost-mrre}]
  The main difference of the final a posteriori bounds in Theorem
  \ref{the:apost-rre-error} and Corollary \ref{eq:apost-mrre} is the
  exponential accumulation in time. In both results, the estimator is
  only valid for $\gamma > 0$, but in Corollary \ref{eq:apost-mrre} it does not depend exponentially on $\frac{1}{\gamma}$.
  The result for the reduced relative
  entropy (Theorem \ref{the:apost-rre-error}) is valid in the case $\mu =
  0$, however that of the modified relative entropy blows up
  as $\mu \to 0$ as $\widetilde{K} \sim \tfrac{1}{\mu}$. Note, however,
  that the $K[\cR_1[u_h]]$ contains the Lipschitz constant of
  $\cR_1[u_h]$ whereas $\widetilde K[\cR_1[u_h]]$ behaves like
  $\Norm{\cR_1[u_h]}_{\leb{\infty}}$.
\end{remark}

\section{Numerical experiments}
\label{sec:numerics}
In this section we conduct some numerical benchmarking on the estimator presented.

\begin{definition}[Estimated order of convergence]
  \label{def:EOC}
  Given two sequences $a(i)$ and $h(i)\downto0$,
  we define estimated order of convergence
  (\EOC) to be the local slope of the $\log a(i)$ vs. $\log h(i)$
  curve, i.e.,
  \begin{equation}
    \EOC(a,h;i):=\frac{ \log(a(i+1)/a(i)) }{ \log(h(i+1)/h(i)) }.
  \end{equation}
\end{definition}

\begin{definition}[Effectivity index]
  \label{def:EI}
  The main tool deciding the quality of an estimator is the
  effectivity index (\EI) which is the ratio of the error and the
  estimator, \ie
  \begin{equation}
    \EI:= 
    \frac{\max_{t} \mathfrak{H}_R}{\Norm{e_R}_{\leb{\infty}(0,T)}}.
  \end{equation}
\end{definition}

\begin{remark}[Computed indicator]
  In the numerical experiments we compute the indicator
  \begin{equation}
    \label{eq:ind-1}
    \begin{split}
      \mathfrak{H}_R &:= \qp{\int_0^t \widetilde{\mathfrak E}}^{1/2} 
      + 
      \sqrt{\gamma} \qp{\Eta_1[u_h, \frac{1}{\gamma}\qp{\tau_h - W'(u_h)}]^2
      +
      \frac{h}{\gamma^2}
      \qp{
        \Norm{\jump{\tau_h}}^2_{\leb{2}(\E)}
        +
        \Norm{\jump{u_h}}^2_{\leb{2}(\E)}
      }
      }
      \\
      &\qquad +
      \frac{\sqrt{\mu}}{2}\bigg[\int_0^t 
        \Eta_1[\pd t u_h, \frac{1}{\gamma}\qp{\pd t \tau_h - \pd t W'(u_h)}]^2
        + 
        \frac{h}{\gamma^2}\Norm{\jump{\partial_t W'(u_h)}}_{\leb{2}(\E)}^2
        +
        \Norm{\sqrt{h_\E^{-1}}\jump{v_h}}_{\leb{2}(\E)}^2
        \bigg]^{\frac{1}{2}},
    \end{split}
  \end{equation}
  where $\Eta_1$ is the elliptic estimator given in Example \ref{ex:ip-apost} and 
  \begin{equation}
    \label{eq:ind-2}
    \begin{split}
      \widetilde{\mathfrak{E}} 
      :=
      \Norm{\sqrt{h_\E^{-1}}\jump{u_h}}_{\leb{2}(\E)}^2
      +
      \mu\Norm{\sqrt{h_\E^{-1}}\jump{\pd t u_h}}_{\leb{2}(\E)}^2
      +
      \sum_{K\in\T{}} h_K^{2p} \Norm{W'(u_h)}_{\sobh{p+1}(K)}^2.
    \end{split}
  \end{equation}
  The terms in the analytic estimator given in Theorem
  \ref{the:apost-rre-error} which are not included in the computed
  indicator are of higher order, thus $\mathfrak{H}_R$ represents the
  dominant part of the analytic estimator.

  Notice also that we do not compute $K[\cR_1[u_h]]$. Of course we
  can, using the elliptic regularity of $\cR_1[u_h]$ we have that 
  \begin{equation}
    \Norm{\cR_1[u_h]}_{\sob{1}{\infty}(S^1)} 
    \lesssim 
    \Norm{\cR_1[u_h]}_{\sobh{2}(S^1)} 
    \lesssim
    \frac{1}{\gamma}
    \Norm{P^C_{p+1}[W'(u_h)] - D^+[\tau_h]}_{\leb{2}(S^1)}.
  \end{equation}
  As such, due to the regularity assumed on $u$ we have that
  $\cR_1[u_h]$ cannot blow up as $h\to 0$. Hence $\exp{\int_0^t K[\cR_1[u_h]]}$ must
  behave like a multiplicative constant.
\end{remark}

\subsection{Test 1: Benchmarking against known solution}

In this test we benchmark the numerical algorithm presented in
\S \ref{sec:discretisation} and the estimator given in Theorem
\ref{the:apost-rre} against a steady state solution of the regularised
elastodynamics system (\ref{eq:model-prob}) on the domain $\W =
[-1,1]$.

We take the double well 
\begin{equation}
  \label{eq:double-well}
  W(u) := \qp{u^2 - 1}^2,
\end{equation}
then a steady state solution to the regularised elastodynamics system
is given by
\begin{gather}
  u(x,t) 
  =
  \tanh\qp{ x \sqrt{\frac{2}{\gamma}}},
  \qquad v(x,t) \equiv 0 \Foreach t.
\end{gather}

The temporal derivatives in (\ref{eq:ind-1})--(\ref{eq:ind-2}) are approximated using difference quotients. We use the approximation
\begin{equation}
  \pd t u_h(t_n) \approx \frac{u_h^n - u_h^{n-1}}{\delta t},
\end{equation}
where $u_h^n$ denotes the fully discrete approximation at time $t_n$ and $\delta t$ the timestep.

For the implementation we are using natural boundary conditions, that is
\begin{equation}
  \partial_x u_h = v_h = 0 \text{ on } [0,T) \times \partial\W,
\end{equation}
rather than periodic.

Tables
\ref{table:p1-gamma-10-3}--\ref{table:p3-gamma-10-3} detail three
experiments aimed at testing the convergence properties for the scheme and estimator 
using piecewise discontinuous elements of various orders ($p=1$ in
Table \ref{table:p1-gamma-10-3}, $p=2$ in Table
\ref{table:p2-gamma-10-3} and $p=3$ in Table
\ref{table:p3-gamma-10-3}).

\begin{table}[h!]
  \caption{\label{table:p1-gamma-10-3} Test 1 :  In this test we benchmark a
    stationary solution of the regularised elastodynamics system using the discretisation
    (\ref{eq:discrete-scheme}) with piecewise linear elements ($p =
    1$). The temporal discretisation is a $2$nd order Crank--Nicolson method and we choose $\delta t = 1/N^2$ and $T=50$. We look at the reduced relative entropy error $e_R$ and the computed estimator $\mathfrak{H}_R$. In this test we choose
    $\gamma = \mu = 10^{-2}$. Notice that the estimator is robust, that is, it converges to zero at the same rate as the error.}
  \begin{center}
    \begin{tabular}{c|c|c|c|c|c}
      $N$ & $\Norm{e_R}_{\leb{\infty}(0,T)}$ & EOC & $\mathfrak{H}_R$ & EOC & EI \\
      \hline
      16 & 6.483984e+00 & 0.000 & 1.208189e+02 & 0.000 &  18.63 
      \\
      32 & 6.611057e+00 & -0.028 & 4.226465e+01& 1.515 & 6.39
      \\
      64 & 9.489125e-01  & 2.800 & 1.382063e+01 & 1.613 & 14.56
      \\
      128& 4.551683e-01 & 1.060 & 5.551150e+00  & 1.316 & 12.20
      \\
      256& 1.810851e-01 & 1.330 & 2.547156e+00 & 1.124 &  14.07
      \\
      512 & 9.046446e-02 & 1.001 & 1.225005e+00 & 1.056 & 13.54
      \\
      1024 & 4.513328e-02 & 1.003 & 6.124622e-01 & 1.000 & 13.57
    \end{tabular}
  \end{center}
\end{table}

\begin{table}[h!]
  \caption{\label{table:p2-gamma-10-3} Test 1 : The test is the same as in
    Table \ref{table:p1-gamma-10-3} with the exception that
    we take $p=2$. Notice that the estimator is robust, that is, it converges to zero at the same rate as the error.}
  \begin{center}
    \begin{tabular}{c|c|c|c|c|c}
      $N$ & $\Norm{e_R}_{\leb{\infty}(0,T)}$ & EOC & $\mathfrak{H}_R$ & EOC & EI \\
      \hline
      16 & 4.604586e+00 & 0.000 & 8.660246e+01 & 0.000 & 18.81
      \\
      32 & 5.714498e-01 & 3.010 & 2.239886e+01 & 1.951 & 39.20
      \\
      64 & 1.590671e-01 & 1.845 & 7.326793e+00 & 1.612 & 46.06
      \\
      128& 1.924736e-02 & 3.047 & 2.886101e+00 & 1.344 & 149.94
      \\
      256& 3.849299e-03 & 2.322 & 8.000554e-01 & 1.851 & 207.84
      \\
      512& 8.598334e-04 & 2.163 & 2.043625e-01 & 1.969 & 237.68
      \\
      1024&2.155231e-04 & 1.996 & 5.134256e-02 & 1.993 & 238.22
    \end{tabular}
  \end{center}
\end{table}

\begin{table}[h!]
  \caption{\label{table:p3-gamma-10-3} Test 1 : The test is the same as in
    Table \ref{table:p1-gamma-10-3} with the exception that
    we take $p=3$. Notice that the estimator is robust, that is, it converges to zero at the same rate as the error.}
  \begin{center}
    \begin{tabular}{c|c|c|c|c|c}
      $N$ & $\Norm{e_R}_{\leb{\infty}(0,T)}$ & EOC & $\mathfrak{H}_R$ & EOC & EI \\
      \hline
      16 & 4.618174e-01 & 0.000 & 3.840195e+01 & 0.000 & 83.15
      \\
      32 & 4.144471e-01 & 0.156 & 1.828968e+01 & 1.070 & 44.13
      \\
      64 & 7.399393e-02 & 2.486 & 6.327297e+00 & 1.531 & 85.51
      \\
      128& 1.036685e-02 & 2.835 & 6.845839e-01 & 3.208 & 66.04
      \\
      256& 1.291002e-03 & 3.005 & 6.165101e-02 & 3.473 & 47.75
      \\
      512& 1.602172e-04 & 3.010 & 6.905804e-03 & 3.158 & 43.10
      \\
      1024&2.010349e-05 & 2.994 & 8.576527e-04 & 3.009 & 42.66
    \end{tabular}
  \end{center}
\end{table}

\subsection{Test 2: Test problem with smooth initial data}

In this case we benchmark an unknown solution to
(\ref{eq:model-prob}). The initial conditions are smooth and taken to
be
\begin{gather}
  u(x,0) 
  =
  \frac{1}{100}\sin{50 \pi x},
  \qquad v(x,0) \equiv 0.
\end{gather}
The double well is again given by (\ref{eq:double-well}) and $\W =
[0,1]$. We summarise the results of this experiment in Table
\ref{tab:test2} and Figure \ref{fig:1d-smooth}.

\begin{table}[h!]
  \caption{\label{tab:test2} Test 2 : In this test we conduct a simulation with smooth initial conditions when the exact solution is unknown. The temporal discretisation is a $2$nd order Crank--Nicolson method and we choose $\delta t = 1/N^2$ and $T=50$. We look at the computed estimator $\mathfrak{H}_R$ and its convergence. In this test we choose
    $\gamma = 10^{-3}, \mu = 10^{-1}$. Note that the estimator converges at the same rates as were observed for the known solution in Test 1. Since the estimator is an upper bound for $e_R$ we have that $e_R$ is also converging optimally.}
  \begin{center}
    \begin{tabular}{c|c|c|c|c|c|c}
      $N$ & \multicolumn{2}{c|}{$p=1$} & \multicolumn{2}{c|}{$p=2$} & \multicolumn{2}{c}{$p=3$}
      \\
          & $\mathfrak{H}_R$ & EOC & $\mathfrak{H}_R$ & EOC & $\mathfrak{H}_R$ & EOC
      \\
      \hline
      16 & 1.188578e+04 & 0.000  & 5.777079e+03 & 0.000 & 3.246980e+03 & 0.000 
      \\
      32 & 4.669754e+03 & 1.348  & 4.358024e+03 & 0.407 & 5.753467e+03 & -0.825
      \\
      64 & 5.463170e+03 & -0.226 & 1.236485e+03 & 1.817 & 1.428526e+03 & 2.010 
      \\
      128 & 3.766053e+03 & 0.537 & 3.129763e+02 & 1.982 & 2.185178e+02 & 2.709 
      \\
      256 & 2.047099e+03 & 0.880 & 7.836314e+01 & 1.998 & 2.582443e+01 & 3.081 
      \\
      512 & 1.046615e+03 & 0.968 & 1.895438e+01 & 2.048 & 3.485471e+00 & 2.889 
      \\
      1024& 5.256529e+02 & 0.994 & 4.639561e+00 & 2.031 & 4.454392e-01 & 2.968 
    \end{tabular}
  \end{center}
\end{table}

\begin{figure}[h!]
  \caption[]
          {
            \label{fig:1d-smooth} 
            Test 2 : The solution, $u_h$ to
            the regularised elastodynamics system at various values of $t$. }
          \begin{center}
            \subfigure[][$t=0.04$]{
              \includegraphics[scale=\figscale, width=0.47\figwidth]
                              {./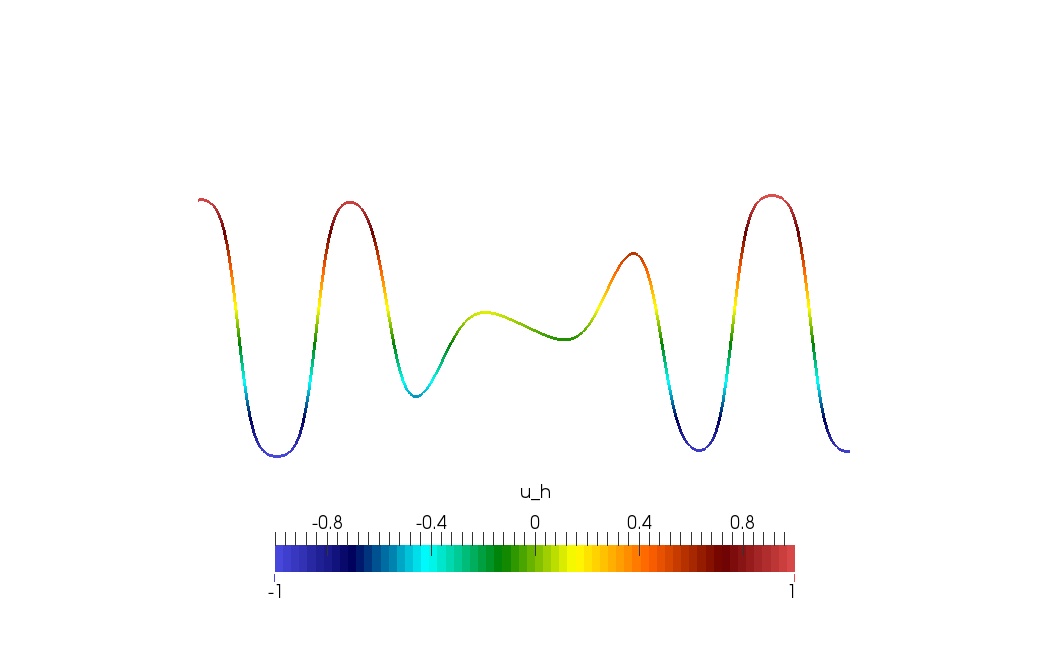}
            }
            \hfill
            \subfigure[][$t=0.17$]{
              \includegraphics[scale=\figscale, width=0.47\figwidth]
                              {./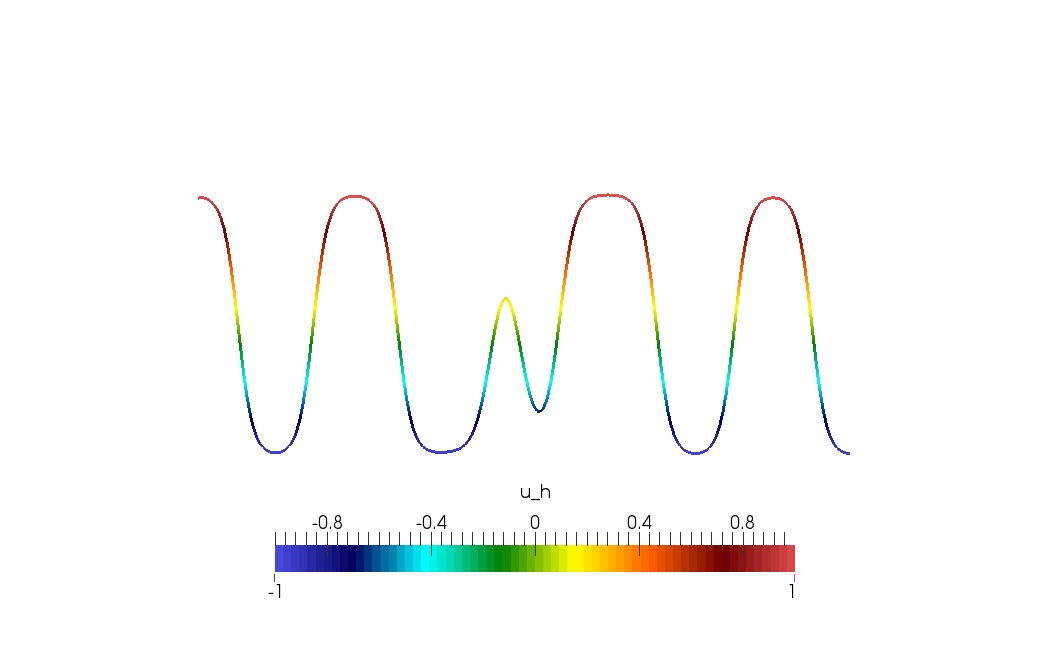}
                             
            }
            \hfill
            \subfigure[][$t=1.244$]{
              \includegraphics[scale=\figscale, width=0.47\figwidth]
              {./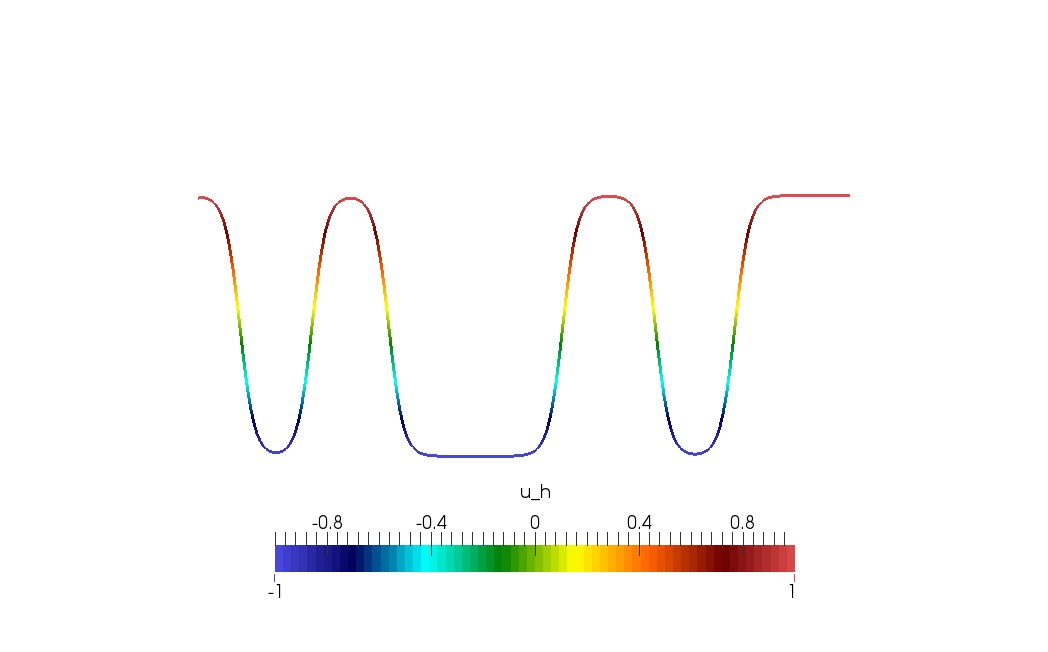}
                             
            }
            \hfill
            \subfigure[][$t=5$]{
              \includegraphics[scale=\figscale, width=0.47\figwidth]
              {./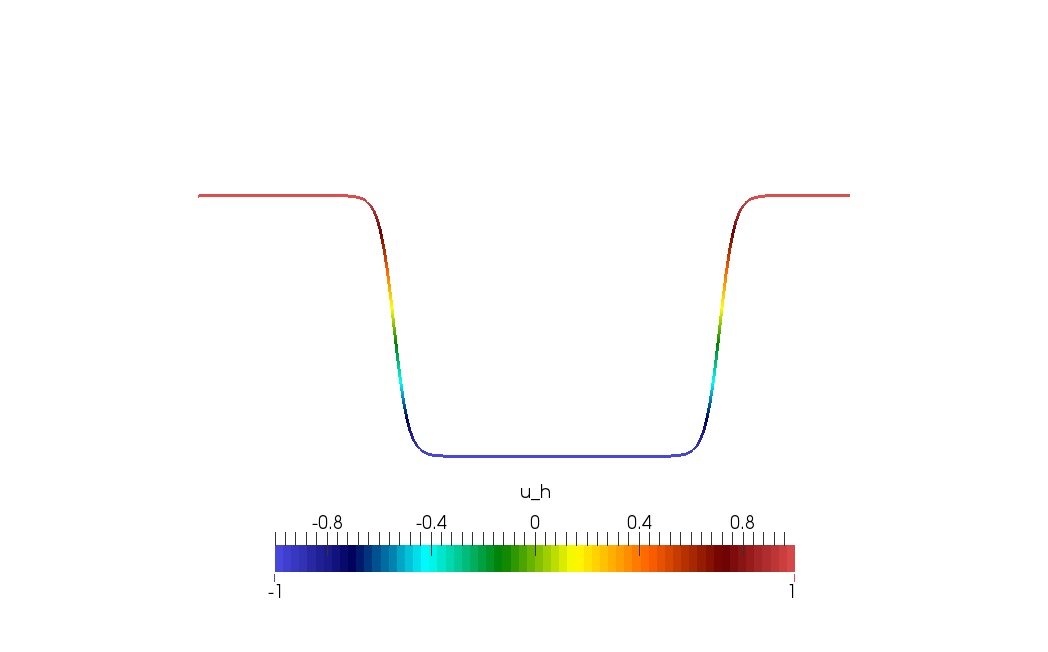}
                             
            }
          \end{center}
\end{figure}

\subsection{Test 3: Test problem with non smooth initial data}

In this case we benchmark an unknown solution to (\ref{eq:model-prob}). We take
\begin{gather}
  u(x,0) 
  =
  \begin{cases}
    \frac{1}{4}\qp{\cos{8\pi \norm{x - \frac{1}{2}}^2} + 1 } \text{ if } \norm{x - \frac{1}{2}}\leq \frac{1}{8}
    \\
    0 \text{ otherwise }
  \end{cases}
  ,
  \qquad v(x,0) \equiv 0.
\end{gather}
The initial conditions do not satisfy the assumptions of Theorem
\ref{the:existence}. In fact, $u_0 \in \sobh{2}(\W) \not \ \
\sobh{3}(\W)$. We summarise the results of this experiment in Table
\ref{tab:test3} and Figure \ref{fig:1d-notsmooth}.

\begin{table}[h!]
  \caption{\label{tab:test3} Test 3 : In this test we conduct a simulation with the initial conditions do not have the required regularity to yield a strong solution. The temporal discretisation is a $2$nd order Crank--Nicolson method and we choose $\delta t = 1/N^2$ and $T=50$. We look at the computed estimator $\mathfrak{H}_R$ and its convergence. In this test we choose
    $\gamma = 10^{-3}, \mu = 10^{-1}$. Note that the estimator converges at the same rates for all values of $p$. This is expected since the solution can not be $\leb{\infty}(0,T;\sobh{3}(\W))$ which would be a strong solution. Since the estimator is an upper bound for $e_R$ we have that $e_R$ is converging, however, suboptimally for $p>1$.}
  \begin{center}
    \begin{tabular}{c|c|c|c|c|c|c}
      $N$ & \multicolumn{2}{c|}{$p=1$} & \multicolumn{2}{c|}{$p=2$} & \multicolumn{2}{c}{$p=3$}
      \\
          & $\mathfrak{H}_R$ & EOC & $\mathfrak{H}_R$ & EOC & $\mathfrak{H}_R$ & EOC
      \\
      \hline
      16 & 1.188578e+04 & 0.000 & 3.246291e+03 & 0.000 & 2.911743e+03 & 0.000
      \\
      32 & 4.669754e+03 & 1.348 & 4.804005e+03 & -0.565& 4.669754e+03 & -0.682
      \\
      64 & 5.463170e+03 & -0.226& 4.729163e+03 & 0.023&  4.521050e+03 & 0.047
      \\
      128 & 3.766053e+03 & 0.537& 3.270226e+03 & 0.532 & 3.241540e+03 & 0.480
      \\
      256 & 2.047099e+03 & 0.880& 1.852629e+03 & 0.820 & 1.843336e+03 & 0.814
      \\
      512 & 1.046615e+03 & 0.968& 9.555120e+02 & 0.955 & 9.534166e+02 & 0.951
      \\
      1024& 5.256529e+02 & 0.994& 4.811709e+02 & 0.990 & 4.804343e+02 & 0.989
    \end{tabular}
  \end{center}
\end{table}

\begin{figure}[h!]
  \caption[]
          {
            \label{fig:1d-notsmooth} 
            Test 3 : The solution, $u_h$ to
            the regularised elastodynamics system at various values of $t$. }
          \begin{center}
            \subfigure[][$t=0.00$]{
              \includegraphics[scale=\figscale, width=0.47\figwidth]
                              {./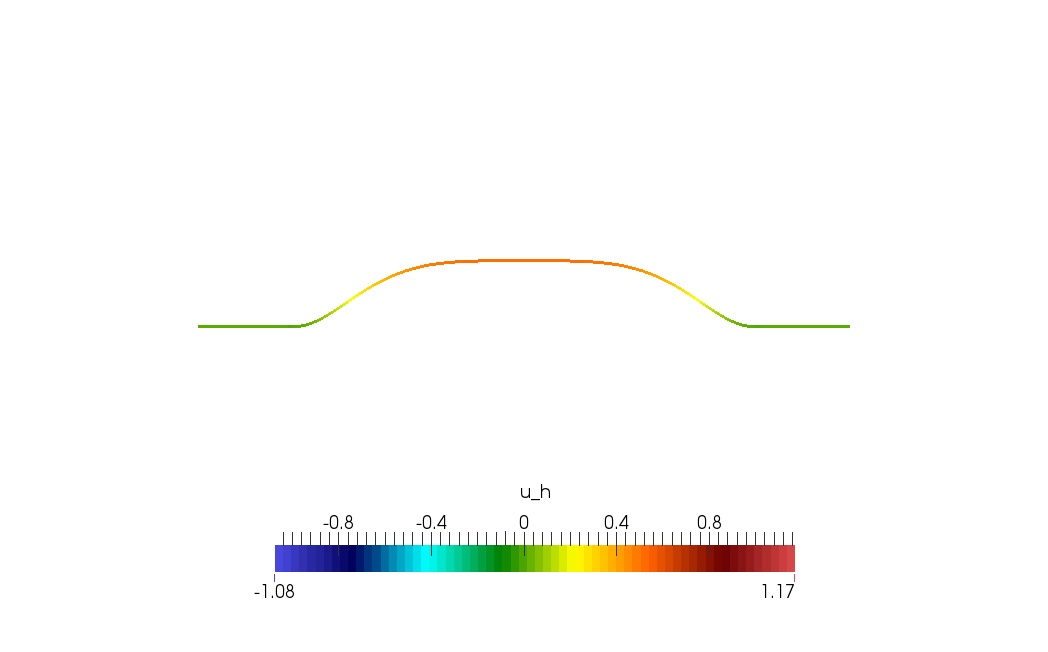}
            }
            \hfill
            \subfigure[][$t=0.1$]{
              \includegraphics[scale=\figscale, width=0.47\figwidth]
                              {./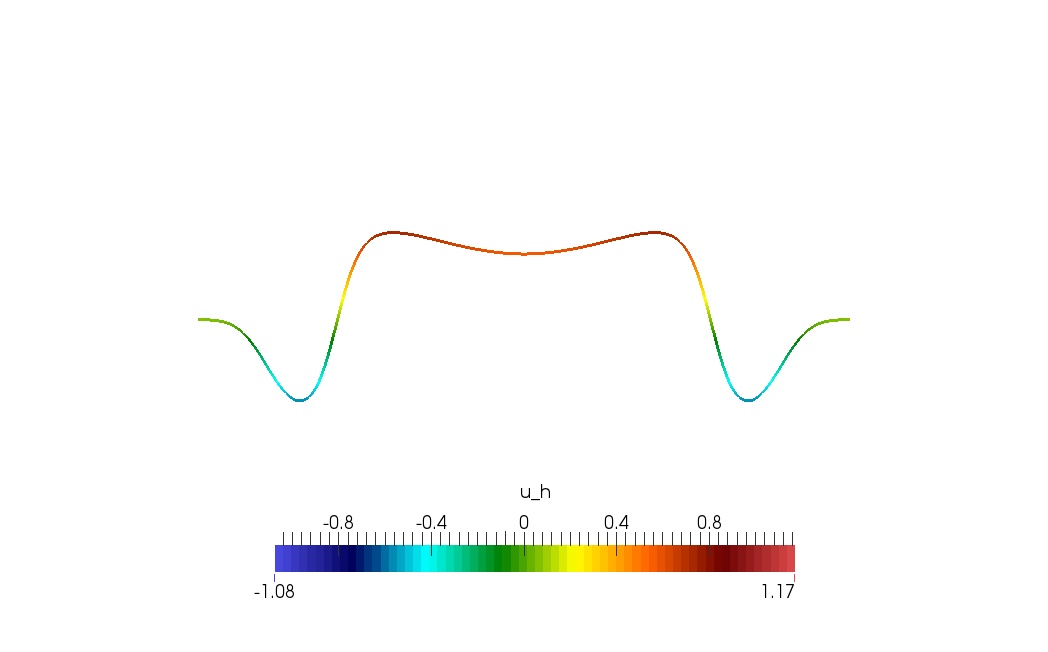}
                             
            }
            \hfill
            \subfigure[][$t=0.29$]{
              \includegraphics[scale=\figscale, width=0.47\figwidth]
              {./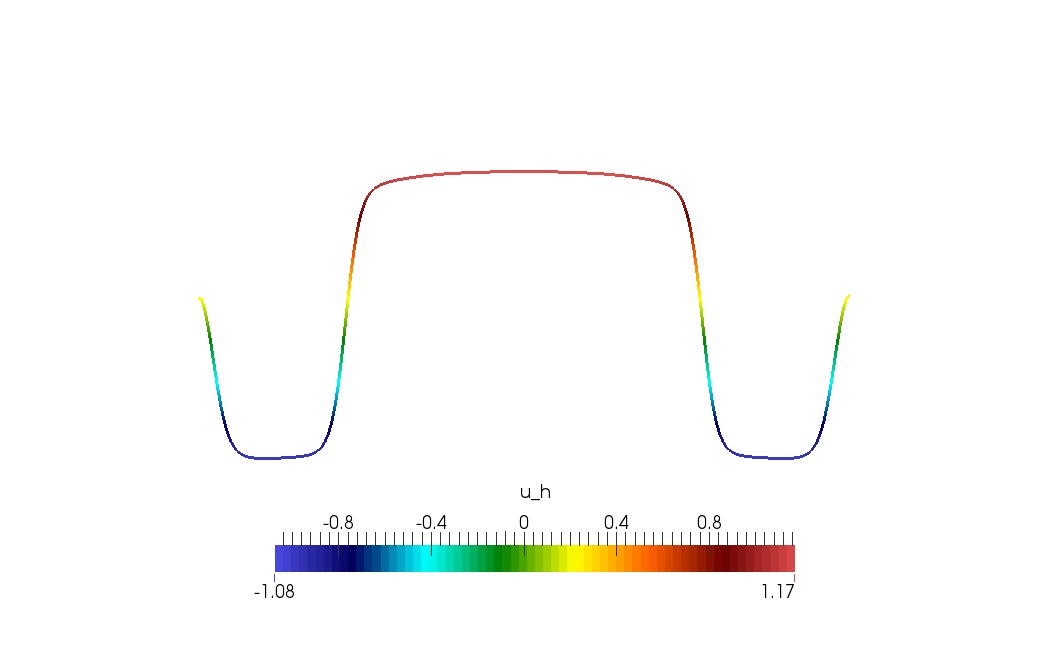}
                             
            }
            \hfill
            \subfigure[][$t=0.5$]{
              \includegraphics[scale=\figscale, width=0.47\figwidth]
              {./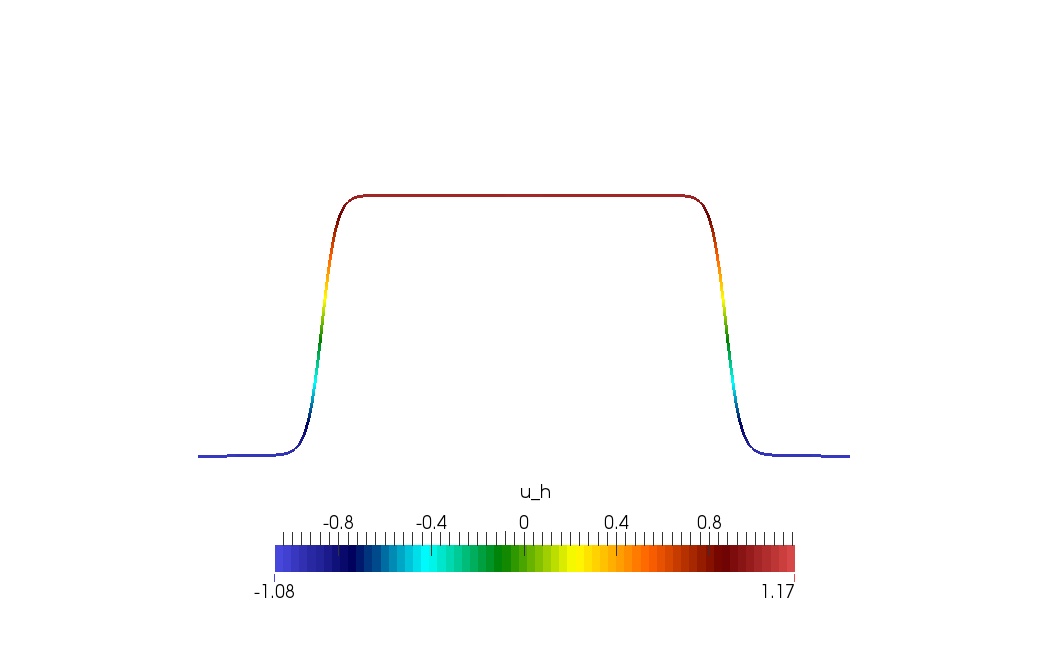}
                             
            }
          \end{center}
\end{figure}


\bibliographystyle{IMANUM-BIB}
\bibliography{nskbib,tristansbib,tristanswritings}

\end{document}